\documentclass{amsart}

\usepackage{amssymb}
\usepackage{amsmath}
\usepackage[unicode,linkcolor=blue,hyperindex]{hyperref}
\usepackage{url,breakurl}
\usepackage{multirow}
\usepackage{multicol}
\usepackage{graphicx}
\usepackage{enumerate}

\usepackage{geometry}

\numberwithin{equation}{section} \numberwithin{figure}{section}

{\theoremstyle{remark} \newtheorem{remark}{Remark}}
\newtheorem{definition}{Definition}[section]
\newtheorem{proposition}{Proposition}[section]

\newtheorem{theorem}{Theorem}[section]
\newtheorem{corollary}{Corollary}[section]
\newtheorem{lemma}{Lemma}[section]


\newcommand{\Wzop}{ \mbox{ \raisebox{7.2pt} {\tiny$\circ$} \kern-10.7pt} {W}^1_p }
\newcommand{\Wzoinfty}{\mbox{ \raisebox{7.2pt} {\tiny$\circ$} \kern-10.7pt} {W}^1_\infty }

\def\calF{\mathcal{F}}

\def\calO{\mathcal{O}}
\def\calC{\mathcal{C}}

\newcommand{\polR}{\mathbb{R}}
\newcommand{\Rd}{{\mathbb{R}^d}}
\newcommand{\vn}{\textbf{n}}
\newcommand{\tHs}{\widetilde H^s(\Omega)}

\newcommand{\supp}{\textrm{supp~}}

\usepackage{color}
\newcommand{\red}[1]{{\color{black}#1}}
\newcommand{\jp}[1]{{\color{black}#1}}
\newcommand{\rhn}[1]{{\color{black}#1}}

\begin{document}
\title[Besov regularity for the fractional Laplacian in Lipschitz domains]{Besov regularity for the Dirichlet integral fractional Laplacian in Lipschitz domains}

\author[J.P.~Borthagaray]{Juan Pablo~Borthagaray}
\address[J.P.~Borthagaray]{Departamento de Matem\'atica y Estad\'istica del Litoral, Universidad de la Rep\'ublica, Salto, Uruguay. Current address: Centro de Matem\'atica, Universidad de la Rep\'ublica, Montevideo, Uruguay.}
\email{jpb@cmat.edu.uy}
\thanks{JPB has been supported in part by NSF grant DMS-1411808 and Fondo Vaz Ferreira grant 2019-068.}

\author[R.H.~Nochetto]{Ricardo H.~Nochetto}
\address[R.H.~Nochetto]{Department of Mathematics and Institute for
Physical Science and Technology, University of Maryland, College
Park, MD 20742, USA}
\email{rhn@math.umd.edu}
\thanks{RHN has been supported in part by NSF grants DMS-1411808 and DMS-1908267.}
 
\begin{abstract}
We prove Besov regularity estimates for the solution of the Dirichlet problem involving the integral fractional Laplacian of order $s$ in bounded Lipschitz domains $\Omega$:
\red{
\[ \begin{aligned}
\|u\|_{\dot{B}^{s+r}_{2,\infty}(\Omega)} \le C \|f\|_{L^2(\Omega)}, &
    \quad r = \min\{s,1/2\}, & \quad \mbox{if } s \neq 1/2, \\
\|u\|_{\dot{B}^{1-\epsilon}_{2,\infty}(\Omega)} \le C \|f\|_{L^2(\Omega)}, &
    \quad \jp{\epsilon \in (0,1),} & \quad \mbox{if } s = 1/2,
\end{aligned} \]
\jp{with explicit dependence of $C$ on $s$ and $\epsilon$.} These estimates are} consistent with the regularity on smooth domains and show that there is no loss of regularity due to Lipschitz boundaries. The proof uses elementary ingredients, such as the variational structure of the problem and the difference quotient technique.
\end{abstract}
 
\maketitle

\section{Introduction}

Given $s \in (0,1)$, we consider the integral fractional Laplacian of order $s$,
\begin{equation}
\label{eq:defofLaps}
  (-\Delta)^s v(x) = C(d,s) \mbox{ p.v.} \int_\Rd \frac{v(x)-v(y)}{|x-y|^{d+2s}} d y, \qquad C(d,s) = \frac{2^{2s} s \Gamma(s+\frac{d}{2})}{\pi^{d/2}\Gamma(1-s)}.
\end{equation}
In this work, we study the regularity of the solution to the homogeneous Dirichlet problem
\begin{equation} \label{eq:Dirichlet}
\left\lbrace \begin{array}{rl}
(-\Delta)^s u  = f & \mbox{in } \Omega, \\
u = 0 & \mbox{in } \Omega^c = \Rd \setminus \overline\Omega ,
\end{array} \right.
\end{equation}
where $\Omega \subset \Rd$ is a bounded Lipschitz domain. 
Our method follows ideas from Savar\'e \cite{Savare98}, who adapted the classical difference quotient technique of Nirenberg \cite{Nirenberg} to develop a clever, $L^2$-based variational argument to deal with regularity of integer-order problems on Lipschitz domains. This led to an elementary approach to the regularity theory developed by Jerison and Kenig \cite{JerisonKenig:1995} for the Laplace equation in Lipschitz domains, and extends to other linear and nonlinear elliptic PDEs. Our technique differs from \cite{Savare98} in two fundamental aspects: it uses second-order differences in the characterization of Besov spaces, and a bootstrap argument to obtain optimal regularity estimates.

Regularity of solutions of the homogeneous fractional Dirichlet problem \eqref{eq:Dirichlet} on bounded domains has been analyzed, for example, in \cite{AbatangeloRosOton,AbelsGrubb,Grubb15,ROSe14,VishikEskin}. We briefly comment on the results in those references. Techniques based on Fourier analysis, such as the ones employed by Vi\v{s}ik and \`Eskin \cite{Eskin, VishikEskin} or Grubb \cite{Grubb15}, allow for a full characterization of mapping properties of the integral fractional Laplacian of functions supported in $\Omega$. However, such arguments typically require the domain $\Omega$ to be smooth; the recent work by Abels and Grubb \cite{AbelsGrubb} introduces a method to handle nonsmooth coordinate changes that leads to regularity results for domains with $C^{1+\beta}$ boundary with $\beta > 2s$. References \cite{AbatangeloRosOton,ROSe14} deal with integral operators with translation-invariant kernels. Ros-Oton and Serra \cite{ROSe14}, by developing an analog of the Krylov boundary Harnack method for \eqref{eq:Dirichlet}, derived H\"older regularity estimates on bounded Lipschitz domains satisfying an exterior ball condition; these, in turn, can be reinterpreted as (weighted) Sobolev estimates for $u$ in terms of H\"older norms of $f$ \cite{AcBo17}. 
Abatangelo and Ros-Oton \cite{AbatangeloRosOton} improved upon the results from \cite{ROSe14}
in the case the domain $\Omega$ is of class $C^{1+\beta}$ with $\beta > s$. 
Finally, let us also point out interior regularity estimates in \cite{BiWaZu17,Cozzi17, Faustmann:20}.

A major difference between problem \eqref{eq:Dirichlet} and its local second-order counterpart is the lack of explicit solutions. Nevertheless, if the domain under consideration is a ball, there is a vast number of examples based on expansions with respect to Meijer $G$-functions, cf. \cite{Dyda2016} (see also \cite{ABBM18} for related results in one-dimensional domains). A striking example with right hand side $f=1$ corresponds to a $2s$-stable L\'evy process in $\Omega$, in which case the solution $u$ is the first exit time. Concretely, if $\Omega = D_r(0)$ is a ball of radius $r$ centered at the origin, then it holds that \cite{Getoor61}
\begin{equation} \label{eq:example}
  u (x) = \frac{2^{-2s} \Gamma\left(\frac{d}{2}\right)}{\Gamma\left(\frac{d+2s}{2}\right) \Gamma(1+s)} \left(r^2 - |x|^2 \right)^s_+ \quad \Rightarrow\quad
  (-\Delta)^s u \equiv 1 \mbox{ in } \Omega.
\end{equation}
This explicit solution is important for a number of reasons. First, it serves as a guide for boundary regularity. In fact, both the domain and the right hand side are smooth, yet the solution $u$ satisfies
\begin{equation}\label{eq:lack-regularity}
u \in \bigcap_{\epsilon > 0} \widetilde H^{s+1/2-\epsilon}(\Omega), \quad \rhn{u \notin \widetilde{H}^{s+1/2}(\Omega);}
\end{equation}
moreover $u\in\dot{B}^{s+1/2}_{2,\infty}(\Omega)$.
We refer to Section \ref{sec:preliminaries} for definitions of spaces and fractional-order norms.
Second, the function $u$ in \eqref{eq:example} exhibits the boundary behavior
\[
u (x) \simeq \mbox{dist}(x, \partial \Omega)^s ,
\]
that is typical of solutions to fractional-order elliptic problems. This algebraic singularity arises regardless of the smoothness of the domain. In particular, it seems plausible that such a weak singularity distributed along $\partial \Omega$ have a stronger effect on the integrability of the difference quotients on the domain than the presence of a reentrant corner could have. This observation is supported by the recent work \cite{Gimperlein:19}, where problem \eqref{eq:Dirichlet} is studied in polygons and asymptotic expansions of its solutions near edges and vertices are given. 

To the best of our knowledge, there are no regularity estimates for \eqref{eq:Dirichlet} up to the boundary valid for arbitrary Lipschitz domains.
\red{We point out, however, to references providing representation formulas for $s$-harmonic functions \cite{Bogdan:99}, and estimates on Green functions in \cite{Jakubowski:02} --following ideas from \cite{Bogdan:00}--.}
Numerical evidence indicates that the high regularity of $\partial\Omega$ assumed in \cite{Grubb15} can be drastically weakened. In \cite{BdPM}, through the study of eigenvalue problems for the integral fractional Laplacian using the finite element method on uniform meshes, similar experimental orders of convergence were obtained on an $L$-shaped domain and in smooth domains. Additionally, in \cite{BoCi18} some experiments on a family of domains with reentrant corners with angle $\theta \in (\pi, 2\pi)$ are carried out. The results in that paper indicate that the orders of convergence in $H^1$-norm do not deteriorate as $\theta \to 2\pi$. These phenomena are in striking contrast with the classical (local) Dirichlet problem, in which reentrant corners affect the regularity of solutions. \red{For problems posed on cones, there is a high sensitivity to the opening solid angle in the behavior of $s$-harmonic functions as $s \to 1$ \cite{Terracini:18}.
}

An important missing information about the regularity of \eqref{eq:Dirichlet} is a shift theorem that accounts for the pick-up of $2s$ derivatives associated with operators of order $2s$ such as the fractional Laplacian $(-\Delta)^s$. Unfortunately, the estimate
\begin{equation}\label{eq:false}
\|u\|_{H^{2s}(\Omega)} \le C \|f\|_{L^2(\Omega)}
\end{equation}
with $C=C(\Omega)$ is false on bounded domains if $s \in [1/2, 1)$ according to \eqref{eq:lack-regularity}; we refer to Section \ref{sec:preliminaries} for definitions and some characterizations of Sobolev and Besov spaces. Estimates of this type are critical in the analysis of discretization schemes, such as the finite element method, in domains $\Omega$ without smooth boundary. Moreover, duality arguments such as the Aubin-Nitsche trick rely on shift estimates; they yield convergence rates in norms weaker than the energy norm. In this paper we derive shift theorems with $L^2(\Omega)$ or weaker regularity of the forcing function $f$ and valid on Lipschitz domains $\Omega$ in $\Rd$. Therefore, our results are relevant in applications on polytopal domains.

The unique weak solution $u$ to \eqref{eq:Dirichlet} is the minimum of the functional $\calF : \tHs \to \polR$,
\begin{equation} \label{eq:def-functional}
\calF (v) = \frac12 |v|_{H^s(\Rd)}^2 - \rhn{\langle f, v \rangle,}
\end{equation}
\rhn{where $\langle f, v \rangle$ denotes the duality pairing for $f\in H^{-s}(\Omega)$.}
For clarity, we shall adopt the following notation for the quadratic and linear components of $\calF$
\begin{equation} \label{eq:def-subfunctionals}
\calF_2 (v) := \frac12 |v|_{H^s(\Rd)}^2, \qquad \rhn{\calF_1 (v) := \langle f, v \rangle.}
\end{equation}
It is clear that if $f \in H^{-s}(\Omega)$, then the solution to \eqref{eq:Dirichlet} verifies $u \in \tHs$ and
\begin{equation} \label{eq:well-posedness}
|u|_{H^s(\Rd)} \le \|f\|_{H^{-s}(\Omega)}.
\end{equation} 

The main goal of this manuscript is to prove the following two shift theorems.

\red{
\begin{theorem}[Besov regularity for $L^2$-data] \label{thm:regularity-solutions}
Let $\Omega$ be a bounded Lipschitz domain and $f \in L^2(\Omega)$. 
If $s \neq 1/2$, then the solution $u$ to \eqref{eq:Dirichlet} satisfies $u \in \dot{B}^{s+r}_{2,\infty}(\Omega)$ with $r = \min \{ s , 1/2\}$, and 
\begin{equation}\label{eq:regularity-u-low}
\| u\|_{\dot{B}^{s+r}_{2,\infty}(\Omega)} \le \frac{C(\Omega,d)}{\sqrt{|1-2s|}} \| f \|_{L^2(\Omega)} .
\end{equation}
On the other hand, the solution for $s = 1/2$ satisfies $u \in \dot{B}^{1-\epsilon}_{2,\infty}(\Omega)$ \jp{for every $0<\epsilon<1$ and}
\begin{equation} \label{eq:regularity-u-s1/2}
\| u \|_{\dot{B}^{1-\epsilon}_{2,\infty}(\Omega)} \le \frac{C(\Omega,d)}{\sqrt{\epsilon}} \| f \|_{L^2(\Omega)}.
\end{equation}
\end{theorem}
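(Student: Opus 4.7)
The starting point is the variational characterization: $u$ is the minimizer of $\calF$ on $\tHs$, so it satisfies the Euler--Lagrange equation $(u,v)_{H^s(\Rd)} = \langle f,v\rangle$ for every $v \in \tHs$, and the energy estimate \eqref{eq:well-posedness} already gives $u \in \tHs$ to bootstrap from. Following Savar\'e, I would adapt Nirenberg's difference quotient technique, but work with \emph{second-order} differences $\Delta_h^2 u(x) := u(x+h) - 2u(x) + u(x-h)$, since the target smoothness index $s+r$ exceeds $1$ when $s>1/2$, and first-order differences saturate at smoothness one. The goal is to establish
\[
\sup_{0<|h|\le h_0} |h|^{-(s+r)} \|\Delta_h^2 u\|_{L^2(\Omega)} \lesssim \|f\|_{L^2(\Omega)},
\]
which, together with the standard Besov characterization via second-order differences, delivers the claimed estimate.

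The next step is a localization through a partition of unity subordinate to a covering of $\overline\Omega$ by one interior patch and finitely many boundary patches. On each boundary patch I would straighten $\partial\Omega$ by a bi-Lipschitz chart and restrict attention to a \emph{tangential} translation vector $h$; this ensures that for a suitable admissible cone of directions, $x\pm h$ lies in $\Omega$ whenever $x$ is in a truncated neighborhood of $\partial\Omega$, so that multiplying $\Delta_h^2 u$ by a cutoff $\eta^2$ yields a legitimate test function in $\tHs$. Testing the Euler--Lagrange identity against $\Delta_{-h}^2(\eta^2 \Delta_h^2 u)$ and exploiting the lower bound $|\Delta_h^2 u|_{H^s(\Rd)}^2 \gtrsim |h|^{-2r}\|\Delta_h^2 u\|_{L^2(\Omega)}^2$ derived from the double-integral representation of the $H^s$ seminorm yields a recursive inequality that upgrades the a priori regularity by $\min\{s,1/2\}$ per step. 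A bootstrap starting from $u\in\tHs$ and iterating finitely many times then reaches the optimal exponent $s+r$.

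The singular constants $1/\sqrt{|1-2s|}$ and $1/\sqrt\epsilon$ come from one-dimensional integrals of the form $\int_0^{h_0} t^{-1-2s+2r}\,\rd t$ that arise when collecting the tangential-direction contributions; these integrals blow up precisely at $s=1/2$, which is why the sharp scale collapses to $\dot B^{1-\epsilon}_{2,\infty}$ in the critical case. The main obstacle I anticipate is the interplay between the non-local operator and the Lipschitz geometry: translated copies of $u$ are not automatically supported in $\Omega$, so cutting off $\Delta_h^2 u$ near $\partial\Omega$ produces boundary correction terms weighted by the singular kernel $|x-y|^{-d-2s}$. Controlling these requires a Hardy-type inequality for functions in $\tHs$ (relating $u$ to $u/\mathrm{dist}(\cdot,\partial\Omega)^s$, in the spirit of the explicit profile \eqref{eq:example}) together with a careful splitting of the double integral into near-diagonal and far-field contributions; the second-order difference structure is essential there to cancel a logarithmic divergence that would otherwise arise at $s=1/2$.
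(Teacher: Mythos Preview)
Your plan diverges from the paper's argument at the very first step, and the deviation creates a real obstacle. You propose to test the Euler--Lagrange equation with $v=\Delta_{-h}^2(\eta^2\Delta_h^2 u)$; the paper instead never tests the equation at all. Savar\'e's device, which the paper follows, is to compare \emph{values} of the energy: because $u$ minimizes a quadratic functional, $\calF(T_h u)-\calF(u)=\tfrac12|u-T_h u|_{H^s(\Rd)}^2$, and one only needs $T_h u\in\tHs$ for a \emph{one-sided} family of directions $h$ in a cone. The paper then bounds $\calF_1(T_hu)-\calF_1(u)$ and $\calF_2(T_hu)-\calF_2(u)$ separately (Propositions~\ref{prop:regularity-f} and~\ref{prop:regularity-F0}) and bootstraps via the recursion $\sigma_{j+1}=s+\sigma_j/2$. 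The second-order difference structure enters only \emph{a posteriori}, through the characterization of $B^{s+\sigma/2}_{2,\infty}$ and Lemma~\ref{L:restrict-2}, which expresses $\omega_2(h_1-h_2)$ for $h_1,h_2$ in the admissible cone in terms of $\omega_2$ evaluated at cone directions only.

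Your test function, by contrast, requires \emph{both} $x+h$ and $x-h$ to stay in $\Omega$ (otherwise $\Delta_{-h}^2(\cdot)$ pushes support outside $\overline\Omega$ and $v\notin\tHs$). On a merely Lipschitz boundary there is no tangential direction with this two-sided property; the uniform cone condition is intrinsically one-sided. Straightening by a bi-Lipschitz chart does not help: the nonlocal seminorm does not transform well under Lipschitz changes of variable, so you cannot reduce to a half-space. This is precisely the difficulty the paper sidesteps by working with one-sided localized translations $T_h$ and the cone algebra of Section~\ref{S:diffs-cones}. Two further points: the inequality you invoke, $|\Delta_h^2 u|_{H^s(\Rd)}^2\gtrsim |h|^{-2r}\|\Delta_h^2 u\|_{L^2(\Omega)}^2$, is false as stated (the $H^s$ seminorm cannot dominate the $L^2$ norm); and the blow-up $1/\sqrt{|1-2s|}$ does not come from the integral you wrote. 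In the paper it arises, for $s<1/2$, from the Marchaud inequality (Lemma~\ref{L:Marchaud}) used to control first differences by second differences in the estimate of $\calF_1$, and for $s>1/2$ from the embedding $L^2(\Omega)\hookrightarrow B^{-s+1/2}_{2,1}(\Omega)$ of Lemma~\ref{lemma:embedding-Sobolev-Besov} combined with Theorem~\ref{prop:regularity-solutions-rough}. No Hardy inequality is used.
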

}
\rhn{
It is worth noticing that \eqref{eq:regularity-u-s1/2} for $s=1/2$ is consistent with \eqref{eq:regularity-u-low} for $s=r=(1-\epsilon)/2$. Moreover, \eqref{eq:regularity-u-low} for $s=r\in(0,1/2)$ is of the form \eqref{eq:false} but with $H^{2s}(\Omega)$ replaced by $\dot{B}^{2s}_{2,\infty}(\Omega) \subsetneq \widetilde{H}^{2s-\epsilon}(\Omega)$ for any $0<\epsilon <2s$, whereas \eqref{eq:regularity-u-low} for $s\in (1/2,1)$ matches \eqref{eq:lack-regularity}.}
Our technique exploits the variational structure of \eqref{eq:Dirichlet} and uses the difference quotient technique of Nirenberg \cite{Nirenberg}; it is thus conceptually elementary. It hinges on an approach introduced by Savar\'e \cite{Savare98} for the classical Laplace operator in Lipschitz domains, but it has two important differences. First, we need to deal with Besov spaces with differentiability order $\sigma\in (0,2)$, instead of $\sigma\in (0,1)$, and corresponding second-order difference quotients in cones. Second, the full regularity pick-up in \eqref{eq:regularity-u-low} and \eqref{eq:regularity-u-s1/2} requires a boostrap argument. \rhn{Our technique does not extend to data $f$ more regular than $L^2$, but it does lead Theorem \ref{prop:regularity-solutions-rough} for more singular data. We refer to the recent papers \cite{BoLiNo:Barrett-22,BoLiNo22} which prove the following {\it optimal shift property} with a novel technique that extends to quasi-linear problems:
\begin{equation}\label{eq:optimal-shift}
  \| u\|_{\dot{B}^{s+1/2}_{2,\infty}(\Omega)} \le C(\Omega,d) \| f \|_{B^{-s+1/2}_{2,1}(\Omega)}
  \quad \forall\, s \in (0,1).
\end{equation}
}

\begin{theorem}[Besov regularity for rough data] \label{prop:regularity-solutions-rough}
Let $\Omega$ be a bounded Lipschitz domain. If $s \in (1/2,1)$ and $f \in B^{-s+1/2}_{2,1}(\Omega)$, then the solution $u$ to \eqref{eq:Dirichlet}
satisfies $u \in \dot{B}^{s+1/2}_{2,\infty}(\Omega)$ with
\begin{equation}\label{eq:s+1/2}
\|u\|_{\dot{B}^{s+1/2}_{2,\infty}(\Omega)} \rhn{\le C(\Omega,d)} \|f\|_{B^{-s+1/2}_{2,1}(\Omega)}.
\end{equation}
\end{theorem}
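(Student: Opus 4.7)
The plan is to adapt the difference-quotient technique underlying Theorem \ref{thm:regularity-solutions}, but working with the $H^s(\Rd)$-seminorm characterization of the target Besov space and closing via Besov duality on the rough datum $f$. A naive application of real interpolation between well-posedness $T:H^{-s}(\Omega)\to \widetilde H^s(\Omega)$ and Theorem \ref{thm:regularity-solutions} $T:L^2(\Omega)\to \dot B^{s+1/2}_{2,\infty}(\Omega)$, with parameters $\theta=1/(2s)$ and $q=1$, does pick out exactly the data space $B^{-s+1/2}_{2,1}(\Omega)$, but delivers only the weaker target $B^{s+1/(4s)}_{2,1}(\Omega)\supsetneq \dot B^{s+1/2}_{2,\infty}(\Omega)$. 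So interpolation alone is insufficient and the variational argument itself must be revisited.

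I would rely on the real-interpolation characterization
\[
\|u\|^2_{\dot B^{s+1/2}_{2,\infty}(\Omega)} \;\sim\; \sup_h |h|^{-1}\,|\Delta_h u|^2_{H^s(\Rd)},
\]
where $\Delta_h u(x)=u(x+h)-u(x)$ and $h$ ranges over a tangential cone adapted to a Lipschitz chart. Testing the weak formulation of \eqref{eq:Dirichlet} with $v = \Delta_{-h}\Delta_h(\eta u)$, where $\eta$ is a Lipschitz-atlas cutoff, and using the translation invariance of the bilinear form $a(\cdot,\cdot)$ on $\Rd$ produces, modulo boundary and localization remainders,
\[
|\Delta_h u|^2_{H^s(\Rd)} = a(\Delta_h u,\Delta_h u) = \langle f,\Delta_{-h}\Delta_h u\rangle + \text{(lower-order)}.
\]
Since $s\in(1/2,1)$ gives $s-1/2\in(0,1/2)$, the Besov duality $\bigl(\widetilde B^{s-1/2}_{2,\infty}(\Omega)\bigr)^* = B^{-s+1/2}_{2,1}(\Omega)$ is at our disposal; combining it with the interpolation bound
\[
\|\Delta_{-h}\Delta_h u\|_{B^{s-1/2}_{2,\infty}} \;\lesssim\; |h|\,\|u\|_{\dot B^{s+1/2}_{2,\infty}(\Omega)},
\]
which I would derive by interpolating the triangle estimate $\|\Delta_{-h}\Delta_h u\|_{\dot B^{s+1/2}_{2,\infty}}\lesssim \|u\|_{\dot B^{s+1/2}_{2,\infty}}$ with the second-difference cancellation $\|\Delta_{-h}\Delta_h u\|_{L^2}\lesssim |h|^{s+1/2}\|u\|_{\dot B^{s+1/2}_{2,\infty}}$, yields
\[
|\Delta_h u|^2_{H^s(\Rd)} \;\lesssim\; |h|\,\|f\|_{B^{-s+1/2}_{2,1}(\Omega)}\,\|u\|_{\dot B^{s+1/2}_{2,\infty}(\Omega)}.
\]
Dividing by $|h|$, taking the supremum over $h$ in the tangential cone, and invoking Young's inequality to absorb one factor of $\|u\|_{\dot B^{s+1/2}_{2,\infty}}$ produces \eqref{eq:s+1/2}.

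The main obstacle I anticipate is the rigorous treatment of the boundary and localization remainders. Unlike the $L^2$ setting of Theorem \ref{thm:regularity-solutions}, the pairing $\langle f,\cdot\rangle$ forces the test function to lie in $\widetilde B^{s-1/2}_{2,\infty}(\Omega)$, so $\eta$ and the tangential cone must be coordinated to keep $\Delta_{-h}\Delta_h(\eta u)$ supported in $\Omega$ with a uniformly controlled Besov norm, and the commutator $[\Delta_h,\eta]u$ has to be shown to be a lower-order term absorbable by the bootstrap already developed for Theorem \ref{thm:regularity-solutions}. A clean route is to establish \eqref{eq:s+1/2} first for $f\in C^\infty_c(\Omega)$, where all pairings are classical and the constant depends only on $\|f\|_{B^{-s+1/2}_{2,1}(\Omega)}$, and then pass to the limit by density in $B^{-s+1/2}_{2,1}(\Omega)$.
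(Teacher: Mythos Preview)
Your strategy is close in spirit to the paper's---both exploit the variational identity and close in $\dot B^{s+1/2}_{2,\infty}$ via Besov duality on $f$---but the way you close the main estimate has a genuine gap, and the fix is precisely what distinguishes the paper's argument.

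\textbf{The circularity.} Your key inequality
\[
|\Delta_h u|^2_{H^s(\Rd)} \;\lesssim\; |h|\,\|f\|_{B^{-s+1/2}_{2,1}(\Omega)}\,\|u\|_{\dot B^{s+1/2}_{2,\infty}(\Omega)}
\]
has the unknown target norm on the right. Dividing by $|h|$ and taking the supremum gives $A^2\lesssim \|f\|\,A$ with $A=\|u\|_{\dot B^{s+1/2}_{2,\infty}}$, which yields $A\lesssim\|f\|$ \emph{only if $A<\infty$}. Your density remark does not help here: even for $f\in C^\infty_c(\Omega)$ on a Lipschitz domain, membership $u\in\dot B^{s+1/2}_{2,\infty}(\Omega)$ is exactly what is being proved (and in the paper's logical order the $s>1/2$ case of Theorem~\ref{thm:regularity-solutions} is deduced \emph{from} Theorem~\ref{prop:regularity-solutions-rough}, not the other way round). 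The culprit is the interpolation bound you chose for $\|\Delta_{-h}\Delta_h u\|_{B^{s-1/2}_{2,\infty}}$, whose second endpoint $\|\Delta_{-h}\Delta_h u\|_{L^2}\lesssim |h|^{s+1/2}\|u\|_{\dot B^{s+1/2}_{2,\infty}}$ already invokes the full target norm. A painless repair is to write $\Delta_{-h}\Delta_h u=\Delta_{-h}(\Delta_h u)$ and interpolate between $\|\Delta_{-h}w\|_{L^2}\lesssim |h|^s|w|_{H^s}$ and $\|\Delta_{-h}w\|_{H^s}\lesssim |w|_{H^s}$ at $\theta=1-1/(2s)$, giving
\[
\|\Delta_{-h}\Delta_h u\|_{B^{s-1/2}_{2,\infty}} \;\lesssim\; |h|^{1/2}\,|\Delta_h u|_{H^s(\Rd)},
\]
which closes \emph{pointwise in $h$}: $|\Delta_h u|_{H^s}\lesssim |h|^{1/2}\|f\|_{B^{-s+1/2}_{2,1}}$ plus lower order, with no a~priori finiteness needed.

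\textbf{How the paper proceeds.} The paper avoids the circularity by a bootstrap rather than a one-shot absorption. Using the localized translations $T_h$ of Definition~\ref{def:translation-operator} and Lemma~\ref{lem:corollary1}, together with the regularity estimates \eqref{eq:regularity-f-Besov} (with $\gamma=s-1/2$, $q'=1$) and \eqref{eq:higher-regularity-F0}, one obtains
\[
\|u\|^2_{\dot B^{s+\sigma/2}_{2,\infty}(\Omega)}\;\le\;\Lambda\,\|u\|_{\dot B^{\sigma+s-1/2}_{2,q}(\Omega)}\,\|f\|_{B^{-s+1/2}_{2,1}(\Omega)},
\]
where the right side involves a \emph{strictly lower} Besov norm of $u$. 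Starting from $\sigma_0=1/2$ (i.e.\ $u\in\widetilde H^s(\Omega)$) and iterating $\sigma_{j+1}=(\sigma_j+1)/2\uparrow 1$ yields \eqref{eq:s+1/2} with a constant independent of $s$. This also absorbs the localization cleanly: the nonlocal commutators you flag as the ``main obstacle'' are exactly the content of Proposition~\ref{prop:regularity-F0}, whose proof (the cut-off pair $\phi,\psi$, the convexity bound on $z(x,y)^2$, and the change of variables for $F_{22}$) is where the real work on the nonlocal bilinear form happens. Your sketch would still need an analogue of that proposition to control $[\Delta_h,\eta]$ and the off-diagonal pieces of $a(\cdot,\cdot)$ before the duality step can be applied.
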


\red{The} following result then follows by an interpolation argument.
\red{
\begin{corollary}[intermediate Besov regularity] \label{cor:intermediate}
If $s \in (0,1) \setminus \{ 1/2 \}$ and  $f \in B^{-s+\theta}_{2,q}(\Omega)$ for
$0 < \theta < \min \{s , 1/2 \}$ and $q \in [1,\infty]$, then there holds
\begin{equation}\label{eq:s+theta}
\| u\|_{\dot{B}^{s+\theta}_{2,q}(\Omega)} \jp{\le C(\Omega,d,q,s)} \| f \|_{B^{-s+\theta}_{2,q}(\Omega)},
\end{equation}
\jp{where $C(\Omega,d,q,s) = C (1-2s)^{\theta/2s}$ for $s\in(0,1/2)$ and $C(\Omega,d,q,s) = C$ for $s\in (1/2,1)$ and $C$ depends on $(\Omega,d,q)$.}
If $s = 1/2$ then, for all $\theta, \epsilon \in (0,1/2)$ and $q \in [1,\infty]$, there holds
\begin{equation}\label{eq:1/2+theta}
\| u\|_{\dot{B}^{1/2+\theta (1- 2\epsilon)}_{2,q}(\Omega)} \jp{\le \frac{C(\Omega,d,q)}{\epsilon^\theta}} \| f \|_{B^{-1/2+\theta}_{2,q}(\Omega)}.
\end{equation}
\end{corollary}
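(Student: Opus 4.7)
The plan is to obtain Corollary \ref{cor:intermediate} by real interpolation of the linear solution operator $T: f \mapsto u$ between the energy estimate \eqref{eq:well-posedness} and the shift bounds provided by Theorems \ref{thm:regularity-solutions} and \ref{prop:regularity-solutions-rough}. Applying the standard $K$-method identity $(B^{\sigma_0}_{2,q_0},B^{\sigma_1}_{2,q_1})_{\tau,q} = B^{(1-\tau)\sigma_0+\tau\sigma_1}_{2,q}$, valid whenever $\sigma_0 \neq \sigma_1$ and $\tau\in(0,1)$, simultaneously on the source and target sides produces a one-parameter family of bounds $T: B^{\sigma(\tau)}_{2,q}(\Omega) \to \dot B^{\Sigma(\tau)}_{2,q}(\Omega)$; the parameter $\tau$ is then chosen so that the interpolated indices equal $-s+\theta$ and $s+\theta$ (or the $s=1/2$ analogue).

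Concretely, for $s\in(0,1/2)$ I would interpolate $T: H^{-s}(\Omega) \to \widetilde H^s(\Omega)$ from \eqref{eq:well-posedness} with $T: L^2(\Omega) \to \dot B^{2s}_{2,\infty}(\Omega)$ from Theorem \ref{thm:regularity-solutions}: setting $\tau=\theta/s$ with $\theta\in(0,s)$ moves the source index from $-s$ to $-s+\theta$ and the target index from $s$ to $s+\theta$, giving \eqref{eq:s+theta} in this range. For $s\in(1/2,1)$ the second endpoint is Theorem \ref{prop:regularity-solutions-rough}, namely $T: B^{-s+1/2}_{2,1}(\Omega) \to \dot B^{s+1/2}_{2,\infty}(\Omega)$, and choosing $\tau=2\theta$ with $\theta\in(0,1/2)$ matches the required indices. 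Finally, for $s=1/2$, I would interpolate \eqref{eq:well-posedness} with the $\epsilon$-dependent bound \eqref{eq:regularity-u-s1/2}: taking $\tau=2\theta$ for $\theta\in(0,1/2)$ produces the source $B^{-1/2+\theta}_{2,q}(\Omega)$ and the target $\dot B^{1/2+\theta(1-2\epsilon)}_{2,q}(\Omega)$, which is exactly \eqref{eq:1/2+theta}.

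The main subtlety lies in bookkeeping of the constants rather than in the interpolation identity itself. Real interpolation yields an interpolated operator norm bounded by $C\,M_0^{1-\tau}M_1^\tau$; since the energy estimate carries constant $1$, this reduces to $C\,M_1^\tau$. With $M_1 \simeq (1-2s)^{-1/2}$ from Theorem \ref{thm:regularity-solutions} we recover the factor $(1-2s)^{-\theta/2s}$ for $s\in(0,1/2)$, while $M_1 \simeq \epsilon^{-1/2}$ for $s=1/2$ yields $\epsilon^{-\theta}$; both agree with the constants stated in the corollary, and no bad factor arises for $s\in(1/2,1)$ since both endpoint constants are independent of $s$ bounded away from $1/2$. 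A last routine item, which follows from the characterizations collected in Section \ref{sec:preliminaries}, is to confirm that the $K$-method identity used above applies in the scales $\widetilde H^s(\Omega)$ and $\dot B^\sigma_{2,q}(\Omega)$ on bounded Lipschitz domains, in the narrow range of indices needed here.
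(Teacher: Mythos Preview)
Your proposal is correct and matches the paper's own proof essentially line by line: the paper too interpolates the solution operator between the energy bound \eqref{eq:well-posedness} and the endpoint shift estimates---\eqref{eq:regularity-u-low-again} for $s\in(0,1/2)$, \eqref{eq:regularity-u-s1/2-again} for $s=1/2$, and \eqref{eq:max-regularity-s-ge-12} for $s\in(1/2,1)$---invoking \eqref{eq:interpolation_Besov} and \eqref{eq:def-negative-Besov} exactly as you describe. Your bookkeeping of the constants is also correct; in particular the factor you obtain, $(1-2s)^{-\theta/2s}$, agrees with the one appearing in the paper's proof (the exponent in the statement of the corollary has a sign typo).
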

}

We first observe the distinct role of the third index $q$ in \eqref{eq:s+1/2} and \red{\eqref{eq:s+theta}/\eqref{eq:1/2+theta}.}
The proof of \eqref{eq:s+1/2} is constructive, in fact a modification of that of Theorem \ref{thm:regularity-solutions},
and yields the conjugate values $q=\infty$ and $q=1$ that arise by duality.
In contrast, \eqref{eq:s+theta} \red{and \eqref{eq:1/2+theta} contain} the same index $q\in[1,\infty]$ on both sides of the \red{estimates
because they are} a consequence of operator interpolation theory; \jp{thus $C$ depends on $q$.} In particular, setting $q=2$ in \eqref{eq:s+theta}
yields the Sobolev regularity
\[
f \in H^{-s+\theta}(\Omega), \ 0 < \theta < \min \{s , 1/2 \}  \quad \Rightarrow \quad u \in \widetilde{H}^{s+\theta}(\Omega), \ | u|_{\widetilde{H}^{s+\theta}(\Omega)} \le C \| f \|_{H^{-s+\theta}(\Omega)}.
\]
We point out that the constant above blows up as $\theta \to \min \{s , 1/2 \}$. This is consistent with either \eqref{eq:regularity-u-low} (for \red{$s \in (0, 1/2)$}) or \eqref{eq:s+1/2} (for $s \in (1/2, 1)$): writing $\|f\|_* = \| f \|_{L^2(\Omega)}$ if $s \in (0,1/2]$ and 
$\|f\|_* = \|f\|_{B^{-s+1/2}_{2,1}(\Omega)}$ if $s \in (1/2,1)$, and
using the continuity of the embedding $\dot{B}^{s+\min \{s , 1/2 \}}_{2,\infty}(\Omega) \subset \widetilde{H}^{s + \min \{s , 1/2 \} -\epsilon}(\Omega)$ for all $\epsilon > 0$ \red{(cf. Lemma \ref{lemma:embedding-Sobolev-Besov} (embeddings between Besov and Sobolev spaces) below)}, we improve upon \cite[Propositions 3.6 and 3.11]{AcBo17}
\red{
\[ \begin{split}
\|u\|_{\widetilde{H}^{s + \min \{s , 1/2 \} -\epsilon}(\Omega)} \lesssim \frac1{\sqrt \epsilon} \|u\|_{\dot{B}^{s+\min \{s , 1/2 \}}_{2,\infty}(\Omega)} \lesssim \frac1{\sqrt \epsilon} \| f \|_* \quad \mbox{ if } s \neq 1/2, \\
\|u\|_{\widetilde{H}^{1 - 2\epsilon}(\Omega)} \lesssim \frac1{\sqrt \epsilon} \|u\|_{\dot{B}^{1 - \epsilon}_{2,\infty}(\Omega)}  \lesssim \frac1{\epsilon} \| f \|_*  \quad \mbox{ if } s = 1/2.
\end{split} \]
}

A comparison between Theorems \ref{thm:regularity-solutions} and \ref{prop:regularity-solutions-rough}
is in order as well. The gain of $2s$ derivatives in \eqref{eq:regularity-u-low} for $f\in L^2(\Omega)$
and \red{$s\in(0, 1/2)$} extends to $f$ in the negative Besov space $B^{-s+\theta}_{2,q}(\Omega)$ and any \red{$s\neq 1/2$}
in \eqref{eq:s+theta}, provided
$\theta < \min\{s,1/2\}$. We regard \eqref{eq:s+theta} as an adequate replacement for the shift estimate
\eqref{eq:false}. On the other hand, if $s \in (1/2,1)$ then the differentiability limit $s+1/2$ of
\eqref{eq:regularity-u-low} can be achieved with rough data $f\in B^{-s+1/2}_{2,1}(\Omega)$ instead of
$f\in L^2(\Omega)$. \red{The case $s=1/2$ is somewhat special: from our technique, we can prove the suboptimal estimate \eqref{eq:regularity-u-s1/2}. In subsequent work \rhn{\cite{BoLiNo:Barrett-22,BoLiNo22}, we improve upon \eqref{eq:regularity-u-s1/2} and show \eqref{eq:optimal-shift}, which is actually valid for $s=1/2$.}}

Regularity estimates for Dirichlet problems with non-zero exterior data can be derived immediately upon combining the regularity results for the homogeneous problem with mapping properties of the integral fractional Laplacian. As an illustration of such results, we have the following.

\begin{corollary}[Besov regularity for non-homogeneous problem] \label{cor:non-homogeneous}
Let $\Omega$ be a bounded Lipschitz domain \rhn{and data $(f,g)$ satisfy the following assumptions:}
\begin{itemize}
\item If $s \in (0,1/2]$, let $f \in L^2(\Omega)$ and $g \in H^{2s}(\Rd)$.
\item If $s \in (1/2,1)$, let $f \in B^{-s+1/2}_{2,1}(\Omega)$ and $g \in H^{s+1/2}(\Rd)$.
\end{itemize}
Then, the unique weak solution $u$ of the non-homogeneous problem
\begin{equation*} \label{eq:non-homogeneous}
\left\lbrace \begin{array}{rl}
(-\Delta)^s u  = f & \mbox{in } \Omega, \\
u = g & \mbox{in } \Omega^c ,
\end{array} \right.
\end{equation*}
satisfies $u \in B^{s+\min\{s,1/2\}}_{2,\infty}(\Rd)$ \red{if $s \neq 1/2$, while $u \in B^{1-\epsilon}_{2,\infty}(\Rd)$ for every $\epsilon \in (0,1/2)$ if $s=1/2$.}
\end{corollary}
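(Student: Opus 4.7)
The plan is to reduce the non-homogeneous problem to the homogeneous one by subtracting a suitable extension of the exterior data. Assume $g$ also denotes an extension to all of $\mathbb{R}^d$ preserving its class (this is standard: $H^{2s}(\mathbb{R}^d)$ for $s\le 1/2$ and $H^{s+1/2}(\mathbb{R}^d)$ for $s>1/2$, which already extend $g$ by hypothesis). Setting $w := u - g$, we obtain $w = 0$ in $\Omega^c$ and
\[
(-\Delta)^s w \;=\; F \;:=\; f - (-\Delta)^s g\big|_{\Omega} \quad \text{in } \Omega,
\]
so $w$ is the weak solution of the homogeneous Dirichlet problem \eqref{eq:Dirichlet} with right-hand side $F$.

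The next step is to check that $F$ has the regularity required by Theorems~\ref{thm:regularity-solutions} and \ref{prop:regularity-solutions-rough}. For $s\in(0,1/2]$, the Fourier multiplier estimate gives $(-\Delta)^s : H^{2s}(\mathbb{R}^d) \to L^2(\mathbb{R}^d)$ continuously, hence $F \in L^2(\Omega)$ and Theorem~\ref{thm:regularity-solutions} yields $w \in \dot{B}^{s+\min\{s,1/2\}}_{2,\infty}(\Omega)$ if $s\ne 1/2$, and $w \in \dot{B}^{1-\epsilon}_{2,\infty}(\Omega)$ if $s=1/2$. For $s\in(1/2,1)$, the analogous mapping $(-\Delta)^s : H^{s+1/2}(\mathbb{R}^d) \to H^{-s+1/2}(\mathbb{R}^d)$ must be combined with an embedding/localization argument to ensure $(-\Delta)^s g\big|_\Omega \in B^{-s+1/2}_{2,1}(\Omega)$, after which Theorem~\ref{prop:regularity-solutions-rough} gives $w \in \dot{B}^{s+1/2}_{2,\infty}(\Omega)$.

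The final step converts the regularity of $w$ on $\Omega$ into regularity of $u$ on all of $\mathbb{R}^d$. Because $w$ vanishes identically in $\Omega^c$, its zero extension belongs to the corresponding Besov space on $\mathbb{R}^d$ with an equivalent norm; this follows from the definition of $\dot{B}^{\sigma}_{2,\infty}(\Omega)$ and the embedding results recalled in Section~\ref{sec:preliminaries}. Since the assumptions on $g$ imply $g \in B^{s+\min\{s,1/2\}}_{2,\infty}(\mathbb{R}^d)$ (via $H^\sigma(\mathbb{R}^d) = B^{\sigma}_{2,2}(\mathbb{R}^d) \hookrightarrow B^{\sigma}_{2,\infty}(\mathbb{R}^d)$, and analogously $H^1 \hookrightarrow B^{1-\epsilon}_{2,\infty}$ in the borderline case), the decomposition $u = w + g$ lies in the claimed space, with the quantitative bound inherited from Theorems~\ref{thm:regularity-solutions} and \ref{prop:regularity-solutions-rough}.

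The main obstacle is the case $s\in(1/2,1)$: verifying that $(-\Delta)^s g\big|_\Omega$ belongs to $B^{-s+1/2}_{2,1}(\Omega)$ with the precise third index $q=1$ demanded by Theorem~\ref{prop:regularity-solutions-rough}. The Fourier-multiplier bound only delivers $H^{-s+1/2}(\mathbb{R}^d) = B^{-s+1/2}_{2,2}(\mathbb{R}^d)$, so a finer treatment is needed — either refining the extension of $g$ to one of class $B^{s+1/2}_{2,1}(\mathbb{R}^d)$ (which is possible from the hypothesis $g \in H^{s+1/2}(\mathbb{R}^d)$ by tuning the extension across $\partial\Omega$), or exploiting the locality of the test functions in $\Omega$ against which $(-\Delta)^s g$ is paired to gain the required summability of the dyadic blocks. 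All the other steps are routine applications of mapping properties and embeddings between Besov and Sobolev spaces recalled in Section~\ref{sec:preliminaries}.
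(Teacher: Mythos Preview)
Your reduction --- set $w=u-g$, solve the homogeneous problem for $w$ with data $F=f-(-\Delta)^s g|_\Omega$, invoke Theorems~\ref{thm:regularity-solutions} or~\ref{prop:regularity-solutions-rough}, then add $g$ back --- is exactly the argument the paper has in mind. The paper does not write out a proof; it only says the corollary follows ``immediately upon combining the regularity results for the homogeneous problem with mapping properties of the integral fractional Laplacian,'' and the paragraph after the corollary makes explicit that the operative condition on $g$ is $(-\Delta)^s g\in L^2(\Omega)$ for $s\le1/2$ and $(-\Delta)^s g\in B^{-s+1/2}_{2,1}(\Omega)$ for $s>1/2$.

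Your concern about the third index when $s\in(1/2,1)$ is well placed and is not addressed in the paper. From $g\in H^{s+1/2}(\Rd)=B^{s+1/2}_{2,2}(\Rd)$ the Fourier-multiplier bound yields only $(-\Delta)^s g\in B^{-s+1/2}_{2,2}(\Rd)$, and the inclusion goes the wrong way: $B^{-s+1/2}_{2,1}(\Omega)\subset B^{-s+1/2}_{2,2}(\Omega)$, not conversely. Your two suggested repairs do not close this gap as stated: modifying $g$ inside $\Omega$ cannot manufacture a $B^{s+1/2}_{2,1}(\Rd)$ extension from data that is only $B^{s+1/2}_{2,2}$ on $\Omega^c$, and the ``locality of test functions'' idea is too vague to supply the missing summability. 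The clean fix is to strengthen the hypothesis to $g\in B^{s+1/2}_{2,1}(\Rd)$ for $s\in(1/2,1)$, so that $(-\Delta)^s g\in B^{-s+1/2}_{2,1}(\Rd)$ and Theorem~\ref{prop:regularity-solutions-rough} applies directly. In short, the issue you spotted is a small imprecision in the stated hypotheses of the corollary itself, not a defect in your line of proof.
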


The simplicity of Corollary \ref{cor:non-homogeneous} comes at the expense of its sharpness. In fact, we only need that either $(-\Delta)^s g \in L^2(\Omega)$ for $s\in (0,1/2]$ or $(-\Delta)^s g \in B^{-s+1/2}_{2,1}(\Omega)$ for $s \in (1/2,1)$. This property does not require much regularity of $g$ in $\Omega^c$ according to \eqref{eq:defofLaps}.

As shown by Savar\'e \cite{Savare98} for the classical $p$-Laplacian, the regularity technique of this paper also applies to quasilinear fractional operators such as the $(p,s)$-Laplacian ($s \in (0,1)$, $p \in (1,\infty)$),
\[
(-\Delta)^s_p v(x) := \mbox{ p.v.} \int_\Rd \frac{|v(x)-v(y)|^{p-2} (v(x) - v(y)) }{|x-y|^{d+ps}} d y,
\]
for which we are not aware of any Sobolev regularity estimates up to the boundary of the domain.
In \cite{BoLiNo22}, we derive regularity estimates for the associated Dirichlet problem and provide a priori error estimates for its finite element discretization; \rhn{we also refer to the survey \cite{BoLiNo:Barrett-22}.}

\subsection*{Outline of the paper}
Let us briefly describe the organization of the paper. Section \ref{sec:preliminaries} collects preliminary material regarding function spaces, the crucial notion of regularity of functionals, and includes some additional definitions useful for the remainder of the manuscript. In Section \ref{sec:regularity} we discuss regularity of the functionals $\calF_1$ and $\calF_2$, which are essential for proving our main results. We conclude with the proofs of Theorem \ref{thm:regularity-solutions} in Section \ref{sec:Besov-regularity} and Theorem \ref{prop:regularity-solutions-rough} and Corollary \ref{cor:intermediate} in Section \ref{sec:less-regular-f}.

\section{Preliminaries and definitions} \label{sec:preliminaries}
This section collects some preliminary results we shall need. We define the function spaces that we shall use in the sequel, provide some characterizations by means of translation operators and discuss the relation between these translations and the regularity of solutions of our model problem.

\subsection{Function spaces}
Here we set the notation about fractional-order Sobolev spaces and Besov spaces and list some of their basic properties that we shall use.

\begin{definition}[fractional Sobolev spaces]
Let $\Omega \subset \Rd$ and $\sigma \in(0,1)$ be given. The fractional Sobolev space $H^{\sigma}(\Omega)$ is defined by
\[
H^{\sigma}(\Omega) := \left\{ v \in L^2(\Omega) \colon |v|_{H^\sigma(\Omega)} < \infty \right\},
\]
where $|\cdot|_{H^\sigma(\Omega)}$ is the Aronszajn-Gagliardo-Slobodeckij seminorm
\begin{equation} \label{eq:Gagliardo-seminorm}
|v|_{H^\sigma(\Omega)} :=  \left( \frac{C(d,\sigma)}{2} \iint_{\Omega\times\Omega} \frac{|v(x)-v(y)|^2}{|x-y|^{d+2\sigma}} \, dx \, dy \right)^{1/2},
\end{equation}
and $C(d,\sigma)$ is the constant from \eqref{eq:defofLaps}.
We furnish this space with the norm 
\[
\|\cdot\|_{H^\sigma(\Omega)} := \left(\|\cdot\|_{L^2(\Omega)}^2 + |\cdot|_{H^\sigma(\Omega)}^2\right)^{1/2}, 
\]
and denote $( \cdot, \cdot )_{H^\sigma(\Omega)}$ the bilinear form
\[
( u , v )_{H^\sigma(\Omega)} := \frac{C(d,\sigma)}{2} \iint_{\Omega\times\Omega} \frac{(u(x)-u(y))(v(x)-v(y))}{|x-y|^{d+2\sigma}} \, dx \, dy, \quad u, v \in H^\sigma(\Omega).
\]
Moreover, if $\sigma\in(1,2)$ we set $H^\sigma(\Omega):= \big\{v\in H^1(\Omega): |v|_{H^{\sigma-1}(\Omega)}<\infty \big\}$.
\end{definition}

\begin{remark}[integrability $p$]
One could also define seminorms \eqref{eq:Gagliardo-seminorm} with differentiability $0<\sigma<2$ but integrability $p \in[1,\infty]$, in which case such spaces are denoted $W^\sigma_p(\Omega)$. In turn, the letter $H$ is often used to denote Bessel potential spaces $H^\sigma_p(\Omega)$; in case $p=2$ the spaces $W^\sigma_2(\Omega)$ and $H^\sigma_2(\Omega)$ coincide, and thus the notation $H^\sigma(\Omega)$ is typically employed.
\end{remark}

Of special interest to us are spaces consisting of zero-extension functions, namely for $\sigma\in(0,1]$
\[
\widetilde{H}^\sigma(\Omega) := \big\{ v \in H^\sigma(\mathbb{R}^d) \colon \supp v \subset \overline{\Omega} \big\};
\]
we define similarly $\widetilde{W}^\sigma_p(\Omega)$ and set $\Wzop(\Omega):=\widetilde{W}^1_p(\Omega)$ for $p\in[1,\infty]$. For these spaces, fractional seminorms are in turn norms. This is a consequence of the following well-known result.

\begin{lemma}[Poincar\'e inequality]
Let $\sigma \in (0,1)$, $p\in[1,\infty]$, and $\Omega$ be a bounded measurable domain. There is a constant $c=c(\Omega,d,\sigma,p)$ such that 
\[
\| v \|_{L^p(\Omega)} \leq c |v|_{W^\sigma_p(\polR^d)}  \quad \forall v \in  \widetilde{W}^\sigma_p(\Omega) .
\]
\end{lemma}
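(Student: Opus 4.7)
The plan is to exploit the defining feature of $\widetilde{W}^\sigma_p(\Omega)$: since $v$ vanishes a.e.\ outside $\Omega$, for any $x \in \Omega$ and $y \in \Omega^c$ one has $|v(x)| = |v(x) - v(y)|$. This elementary identity converts pointwise values of $v$ inside $\Omega$ into difference quotients that already appear in the $W^\sigma_p(\polR^d)$-seminorm, so the bulk of the argument reduces to localizing a convenient piece of that seminorm and invoking Fubini.

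The main obstacle, and the only place the geometry of $\Omega$ enters, is to exhibit for every $x \in \Omega$ a test set $A_x \subset \Omega^c$ whose measure is bounded below uniformly in $x$ and whose points lie at a uniformly controlled distance from $x$. Boundedness of $\Omega$ makes this completely elementary: setting $R := \text{diam}(\Omega)$ and taking $A_x$ to be the spherical annulus $B_{2R}(x) \setminus \overline{B_R(x)}$, one has $\Omega \subset \overline{B_R(x)}$, so $A_x \subset \Omega^c$ automatically, $|A_x| = (2^d - 1)|B_R|$ is independent of $x$, and $R \le |x-y| \le 2R$ for every $y \in A_x$.

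For $p < \infty$, I would then restrict the double integral defining $|v|_{W^\sigma_p(\polR^d)}^p$ to the slab $\{(x,y) : x \in \Omega,\; y \in A_x\}$, insert $v(y) = 0$, bound $|x-y|^{-(d+\sigma p)}$ below by $(2R)^{-(d+\sigma p)}$, and apply Fubini; this immediately yields
\[
|v|_{W^\sigma_p(\polR^d)}^p \;\ge\; \int_\Omega |v(x)|^p \int_{A_x} \frac{dy}{|x-y|^{d+\sigma p}}\,dx \;\ge\; c\,\|v\|_{L^p(\Omega)}^p,
\]
with $c = (2^d-1)|B_R|(2R)^{-(d+\sigma p)}$, and the desired inequality follows by taking $p$-th roots. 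The endpoint $p=\infty$ is even shorter: the Hölder-type definition of $|v|_{W^\sigma_\infty(\polR^d)}$ combined with $v(y) = 0$ for $y \in A_x$ directly gives $|v(x)| \le (2R)^\sigma |v|_{W^\sigma_\infty(\polR^d)}$ for a.e.\ $x \in \Omega$. Notice that no regularity of $\partial\Omega$, approximation by smooth functions, or embedding theorem is required.
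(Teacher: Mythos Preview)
Your argument is correct. The paper does not actually supply a proof of this lemma: it is stated as a ``well-known result'' and used immediately afterward, so there is no paper proof to compare against.

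Your approach is the standard elementary one and is cleanly executed. The key geometric observation --- that for $x\in\Omega$ the annulus $A_x=B_{2R}(x)\setminus\overline{B_R(x)}$ with $R=\operatorname{diam}(\Omega)$ lies entirely in $\Omega^c$ --- is exactly right, since any $z\in\Omega$ satisfies $|x-z|\le R$. From there the computation for $p<\infty$ is immediate, and the $p=\infty$ case via the H\"older seminorm is equally direct. Your remark that no boundary regularity is needed is also on point and matches the hypothesis ``bounded measurable domain'' in the statement. One minor cosmetic note: the constant you record, $c=(2^d-1)|B_R|(2R)^{-(d+\sigma p)}$, is the \emph{lower} bound on the ratio $|v|_{W^\sigma_p}^p/\|v\|_{L^p}^p$, so the Poincar\'e constant in the lemma is its reciprocal to the power $1/p$; this is clear from context but worth stating explicitly.
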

Therefore, in the case of our interest $p=2$,
\[
\| v \|_{\widetilde H^\sigma(\Omega)} := | v |_{H^\sigma(\mathbb{R}^d)}
\]
defines a norm equivalent to $\| \cdot \|_{H^\sigma(\mathbb{R}^d)}$ in $\widetilde{H}^\sigma(\Omega)$.

We define Besov spaces through real interpolation, following \cite{mclean2000strongly}. Given a compatible pair of Banach spaces $(X_0, X_1)$, $u \in X_0 + X_1$, and $t>0$, we set the $K$-functional
\begin{equation} \label{eq:K-functional}
K(t, u) = \inf \left\{ \left(\| u_0 \|_{X_0}^2 + t^2 \| u_1 \|_{X_1}^2 \right)^{1/2} \colon u = u_0 + u_1, \ u_0 \in X_0, \ u_1 \in X_1 \right\}. 
\end{equation}
For $\theta \in (0,1)$ and $q \in [1, \infty]$, let us define interpolation spaces
\[
(X_0, X_1)_{\theta, q} := \{ u \in X_0 + X_1 \colon \| u \|_{(X_0, X_1)_{\theta, q}} < \infty \},
\]
where
\begin{equation}\label{eq:Besov-norm}
\| u \|_{(X_0, X_1)_{\theta, q}} = \left\lbrace
\begin{aligned}
& \left[ q \theta (1-\theta) \int_0^\infty t^{-(1+\theta q)} |K(t,u)|^q \, dt \right]^{1/q} & \mbox{if } 1 \le q < \infty, \\
& \sup_{t > 0} \ t^{-\theta} |K(t,u)|  & \mbox{if } q = \infty.
\end{aligned}
\right.
\end{equation}
The normalization factor $q \theta (1-\theta)$ in the norm \eqref{eq:Besov-norm} guarantees the correct scalings in the limits $\theta \to 0$, $\theta \to 1$ and $q \to \infty$ for norm continuity.

\begin{definition}[Besov spaces]
Given $\sigma \in (0,2)$, $p,q \in [1, \infty]$, we define the spaces
\[ \begin{aligned}
& B^{\sigma}_{p,q} (\Omega) := \big(L^p(\Omega), W^2_p(\Omega) \big)_{\sigma/2,q}, \\
& \dot{B}^{\sigma}_{p,q} (\Omega) := \big\{ v \in B^{\sigma}_{p,q} (\Rd) \colon {\rm supp} \, v \subset \overline{\Omega} \big\}.
\end{aligned}\]
For $\sigma \in (0,1)$, the equivalent definition $\dot{B}^\sigma_{p,q} (\Omega) = \big(L^p(\Omega), \Wzop(\Omega)\big)_{\sigma,q}$ is valid. 
Moreover, we define
\begin{equation} \label{eq:def-negative-Besov}
B^{-\sigma}_{p,q} (\Omega) := \big(L^p(\Omega), W^{-1}_p(\Omega) \big)_{\sigma,q}.
\end{equation}
\end{definition}

Importantly, whenever $p=q$, Besov spaces reduce to Sobolev spaces \cite[\S\S 35--36]{Tartar07}:
\[
B^\sigma_{p,p} (\Omega) = W^\sigma_p(\Omega), \quad \dot{B}^\sigma_{p,p} (\Omega) = \widetilde W^\sigma_p(\Omega).
\]
There are two basic properties of Besov spaces on Lipschitz domains that will be useful in the sequel (cf. \cite[\S3.2.4, \S3.3.1]{Triebel10}: 
\[ \begin{array}{llll}
B^\sigma_{p,q_0}(\Omega) \subset B^\sigma_{p,q_1}(\Omega),  & \mbox{ if } \sigma > 0, &  1 \le p \le \infty, & 1 \le q_0 \le q_1 \le \infty; \\
B^{\sigma_1}_{p,q_1}(\Omega) \subset B^{\sigma_0}_{p,q_0}(\Omega) & \mbox{ if } 0 < \sigma_0 < \sigma_1, &   1 \le p \le \infty , & 1 \le q_0, q_1 \le \infty.
\end{array} \]
\red{In particular, for all $0 < \sigma_0 < \sigma_1$ it holds that
\begin{equation*} \label{eq:embedding_Besov}
B^{\sigma_1}_{2,\infty}(\Omega) \subset B^{\sigma_0}_{2,2}(\Omega) = H^{\sigma_0}(\Omega).
\end{equation*} }

We also have the following result regarding interpolation of Besov spaces (cf. \cite[Theorem 6.4.5]{BerghLofstrom}): given $\sigma_0 \ne \sigma_1$, $1 \le p, q_0, q_1, r \le \infty$ and $0 < \theta < 1$,
\begin{equation} \label{eq:interpolation_Besov}
\left( B^{\sigma_0}_{p,q_0}(\Omega), B^{\sigma_1}_{p,q_1}(\Omega) \right)_{\theta, r} = B^{\sigma}_{p,r}(\Omega) , \quad \mbox{where } \sigma = (1-\theta) \sigma_0 + \theta \sigma_1 .
\end{equation}
In particular, Besov spaces $B^{\sigma}_{p,q}(\Omega)$ with $\sigma\in(0,1)$ could be defined by interpolation either between $L^p(\Omega)$ and $W^2_p(\Omega)$ with index $\theta=\sigma/2$ or between $L^p(\Omega)$ and $W^1_p(\Omega)$ with index $\theta=\sigma$. Even though the spaces coincide, their norms defined in \eqref{eq:Besov-norm} are scaled differently. The corresponding factors $\big( q \sigma (1-\sigma/2) \big)^{1/q}$ and $\big( q \sigma (1-\sigma) \big)^{1/q}$ tend to zero as $\sigma\to2$ and $\sigma\to1$, respectively. \jp{Moreover, one can characterize spaces $B^{\sigma}_{p,q}(\Omega)$ with differentiability order $|\sigma| < 1/2$ through interpolation between negative and positive-order Sobolev spaces,
\begin{equation} \label{eq:Besov-intermediate}
\dot{B}^{\sigma}_{p,q}(\Omega) = 
\left( \Wzop(\Omega), W^{-1}_p(\Omega) \right)_{\theta, q}, \quad \mbox{with } \theta = \frac{1-\sigma}2 \in (1/4, 3/4).
\end{equation}
This characterization yields robust norms with respect to $\sigma$, and will be useful in Lemma \ref{lemma:embedding-Sobolev-Besov}.
}

The positive-order Besov spaces $\dot{B}^\sigma_{p,q}$ can be regarded as duals of negative-order Besov spaces. In fact, if $\sigma \in (0,1)$ and $p,q \in (1, \infty]$, by combining the property $\big((X_0,X_1)_{\sigma,q'}\big)' = (X_0', X_1')_{\sigma,q}$ (cf. \cite[Lemma 41.3]{Tartar07}) with definition \eqref{eq:def-negative-Besov} and the duality $\big(W^{-1}_{p'}(\Omega)\big)' = \Wzop(\Omega)$, we deduce
\begin{equation} \label{eq:dual-Besov}
\dot{B}^\sigma_{p,q} (\Omega) = \left( B^{-\sigma}_{p',q'} (\Omega) \right)'.
\end{equation}

\rhn{
We will need to relate Besov and Sobolev spaces. The following embedding is well known, but we include a simple proof to exhibit the explicit blow up of the continuity constant.
  
\begin{lemma}[embeddings between Besov and Sobolev spaces] \label{lemma:embedding-Sobolev-Besov}
Let $r > 0$ and $0<\epsilon <1/4$. Then, $B^{r}_{2,\infty}(\Omega) \subset H^{r-\epsilon}(\Omega)$ and
\begin{equation} \label{eq:embedding}
\|v\|_{H^{r-\epsilon}(\Omega)}  \lesssim \frac{1}{\sqrt{\epsilon}} \| v \|_{B^{r}_{2,\infty}(\Omega)} \quad \forall v \in B^{r}_{2,\infty}(\Omega).
\end{equation}
In addition, if $r\in(0,1/2)$, then
\begin{equation} \label{eq:embedding-negative}
\| v \|_{B^{-r}_{2,1}(\Omega)} \lesssim \frac{1}{\sqrt{r}} \|v\|_{L^2(\Omega)}  \quad \forall v \in L^2(\Omega).
\end{equation}
\end{lemma}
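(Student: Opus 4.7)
The plan is to establish both embeddings from direct $K$-functional computations on the Besov norm \eqref{eq:Besov-norm}, keeping the normalization $q\theta(1-\theta)$ visible in order to extract the announced blow-up constants; \eqref{eq:embedding-negative} will then follow from \eqref{eq:embedding} by the duality \eqref{eq:dual-Besov}.

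For \eqref{eq:embedding} I exploit the identity $H^{r-\epsilon}(\Omega)=B^{r-\epsilon}_{2,2}(\Omega)$ and express both norms via the $K$-functional for the pair $(L^2(\Omega),W^2_2(\Omega))$. Two elementary bounds are available: the trivial decomposition $v=v+0$ yields $K(t,v)\le\|v\|_{L^2(\Omega)}$, while the very definition $\|v\|_{B^r_{2,\infty}(\Omega)}=\sup_{t>0}t^{-r/2}K(t,v)$ gives $K(t,v)\le t^{r/2}\|v\|_{B^r_{2,\infty}(\Omega)}$. Evaluating the latter at $t=1$ also produces the harmless embedding $\|v\|_{L^2(\Omega)}\lesssim\|v\|_{B^r_{2,\infty}(\Omega)}$. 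Substituting these bounds into
\[
\|v\|^2_{B^{r-\epsilon}_{2,2}(\Omega)}=(r-\epsilon)\bigl(1-\tfrac{r-\epsilon}{2}\bigr)\int_0^\infty t^{-(1+(r-\epsilon))}K(t,v)^2\,dt,
\]
splitting the integral at $t=1$, and computing $\int_0^1 t^{\epsilon-1}\,dt=1/\epsilon$ and $\int_1^\infty t^{-(1+(r-\epsilon))}\,dt=1/(r-\epsilon)$, one sees that the prefactor $(r-\epsilon)$ cancels the tail divergence, leaving the clean $1/\epsilon$ that produces the $1/\sqrt{\epsilon}$ blow-up after taking a square root.

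For \eqref{eq:embedding-negative} I invoke the duality \eqref{eq:dual-Besov} with $p=2$ and $q=\infty$, i.e., $\dot{B}^r_{2,\infty}(\Omega)=(B^{-r}_{2,1}(\Omega))'$. Consequently, for any $v\in L^2(\Omega)\subset B^{-r}_{2,1}(\Omega)$ one has
\[
\|v\|_{B^{-r}_{2,1}(\Omega)}=\sup_{w\in\dot{B}^r_{2,\infty}(\Omega)}\frac{|\langle w,v\rangle|}{\|w\|_{\dot{B}^r_{2,\infty}(\Omega)}}\le\|v\|_{L^2(\Omega)}\sup_{w\in\dot{B}^r_{2,\infty}(\Omega)}\frac{\|w\|_{L^2(\Omega)}}{\|w\|_{\dot{B}^r_{2,\infty}(\Omega)}}.
\]
Because $r<1/2$ the choice $\epsilon=r/2\in(0,1/4)$ is admissible in \eqref{eq:embedding}; combining it with the continuous embedding $H^{r/2}(\Omega)\hookrightarrow L^2(\Omega)$ produces $\|w\|_{L^2(\Omega)}\lesssim\frac{1}{\sqrt r}\|w\|_{\dot{B}^r_{2,\infty}(\Omega)}$, whence \eqref{eq:embedding-negative}. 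The main obstacle is the precise choice of the splitting point $t=1$ in the first embedding: it is exactly what makes the normalization $(r-\epsilon)$ cancel the tail's $(r-\epsilon)^{-1}$ divergence, so that the only residual singularity as $\epsilon\to 0$ is the clean $1/\epsilon$. An implicit requirement throughout, needed for $H^{r-\epsilon}(\Omega)$ to be a positive-order Sobolev space, is $r-\epsilon>0$.
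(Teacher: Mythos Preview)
Your argument for \eqref{eq:embedding} only covers the range $r\in(0,2)$: interpolation between $L^2(\Omega)$ and $W^2_2(\Omega)$ cannot produce Besov or Sobolev spaces of differentiability $\ge 2$, so for $r\ge 2$ neither $\|v\|_{B^r_{2,\infty}}$ nor $\|v\|_{H^{r-\epsilon}}$ is captured by your $K$-functional. The paper handles arbitrary $r>0$ by sliding the endpoints: it picks the integer $k$ with $r\in(k-\tfrac12,k+\tfrac12]$ and interpolates between $H^{k-1}(\Omega)$ and $H^{k+1}(\Omega)$, which forces both interpolation indices into $[1/8,3/4]$ and makes the normalization constants $q\theta(1-\theta)$ uniformly harmless. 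The paper also treats the tail differently: it splits at a large $N$, bounds the tail by $C(\Omega)N^{-2\sigma}\|v\|_{H^{r-\epsilon}}^2$, and absorbs this term into the left-hand side, rather than relying on the prefactor $(r-\epsilon)$ to cancel a $1/(r-\epsilon)$ divergence as you do. For the range $r\in(0,2)$ actually needed in the applications of this paper, your strategy is a legitimate and slightly slicker variant; but it does not establish the lemma as stated.

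Your route to \eqref{eq:embedding-negative} via the duality \eqref{eq:dual-Besov} is genuinely different from the paper's. The paper repeats the direct $K$-functional computation, this time with the pair $(H^{-1}(\Omega),H^1(\Omega))$ and index $\theta=(1-r)/2\in(1/4,1/2)$, applying H\"older's inequality to bound the $q=1$ integral on $(0,1)$ by the $q=2$ norm and picking up the factor $(\int_0^1 t^{-1+r}\,dt)^{1/2}=r^{-1/2}$ directly. Your duality argument is shorter, though the detour through \eqref{eq:embedding} with $\epsilon=r/2$ is unnecessary: the bound $\|w\|_{L^2(\Omega)}\lesssim\|w\|_{B^r_{2,\infty}(\Omega)}$ with an $r$-independent constant already follows from $K(1,w)\le\|w\|_{B^r_{2,\infty}}$, so the $1/\sqrt{r}$ blow-up must instead be hiding in the norm identification \eqref{eq:dual-Besov}, which is only an equivalence and whose constants depend on the normalizations in \eqref{eq:Besov-norm}. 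The paper's direct computation makes the origin of the $1/\sqrt{r}$ transparent.
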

\begin{proof}
We employ the $K$-functional \eqref{eq:K-functional}. To prove \eqref{eq:embedding}, we let $k\ge0$ be the integer such that $r \in (k-1/2, k+1/2]$. We regard $H^{r-\epsilon}(\Omega)$ and $B^{r}_{2,\infty}(\Omega)$ as interpolation spaces between $X_0=H^{k-1}(\Omega)$ and $X_1=H^{k+1}(\Omega)$ with 
\[
H^{r-\epsilon}(\Omega) = \big[ H^{k-1}(\Omega),H^{k+1}(\Omega) \big]_{\sigma,2},
\quad
B^{r}_{2,\infty}(\Omega) = \big[ H^{k-1}(\Omega),H^{k+1}(\Omega) \big]_{\theta,\infty},
\]
where $\theta = \frac{r-k+1}{2}$ and $\sigma = \frac{r-\epsilon-k+1}{2}=\theta - \frac{\epsilon}2$.
This choice of spaces $(X_0,X_1)$ guarantees that $\theta \in [1/4,3/4]$ and $\sigma \in [1/8,3/4]$, if $\epsilon \in (0,1/4)$, are uniformly far from $0,1$ and the norms in \eqref{eq:Besov-norm} are robust. Given $v \in B^{r}_{2,\infty}(\Omega)$, using \eqref{eq:Besov-norm} for $q=2$ we deduce that for any $N\ge1$ to be found
\[
\|v\|_{H^{r-\epsilon}(\Omega)}^2 \lesssim \int_0^N t^{-(1+2\sigma)} \big| K(t,v) \big|^2 dt + \int_N^\infty t^{-(1+2\sigma)} \big| K(t,v) \big|^2 dt.
\]
Moreover, exploiting again \eqref{eq:Besov-norm} but now for $q=\infty$ yields
\[
\int_0^N t^{-(1+2\sigma)} \big| K(t,v) \big|^2 dt \le \sup_{t>0} \Big( t^{-2\theta} \big| K(t,v) \big|^2 \Big)
\int_0^N t^{-1+\epsilon}dt = \frac{N^{\epsilon}}{\epsilon} \|v\|_{B^{r}_{2,\infty}(\Omega)}^2.
\]
On the other hand, we clearly have $\big| K(t,v) \big| \le \|v\|_{H^{k-1}(\Omega)} \le C(\Omega) \|v\|_{H^{r-\epsilon}(\Omega)}$ and
\[
\int_N^\infty t^{-(1+2\sigma)} \big| K(t,v) \big|^2 dt \le C(\Omega)^2  \|v\|_{H^{r-\epsilon}(\Omega)}^2 \int_N^\infty t^{-(1+2\sigma)} dt
= \frac{C(\Omega)^2}{2 \sigma N^{2\sigma}}  \|v\|_{H^{r-\epsilon}(\Omega)}^2.
\]
Recalling that $2\sigma \in [1/2, 3/2]$ and choosing $N$ sufficiently large so that $\frac{C(\Omega)^2}{2 \sigma N^{2\sigma}} \le \frac12$ leads to the desired estimate \eqref{eq:embedding}. 

To prove \eqref{eq:embedding-negative}, we use a very similar technique; indeed, \jp{we exploit \eqref{eq:Besov-intermediate} to write}
\[
B^{-r}_{2,1}(\Omega) = (H^{-1}(\Omega), H^1(\Omega))_{\theta,1} \quad 
L^2(\Omega) = (H^{-1}(\Omega), H^1(\Omega))_{1/2,2},
\]
with $\theta = \frac{1-r}{2} \in (1/4,1/2)$. Given $v \in L^2(\Omega)$, an application of H\"older's inequality gives
\[
\int_{0}^1 t^{-(1+\theta)} |K(t,v)| dt
\jp{\le \left(\int_{0}^1 t^{-2} |K(t,v)|^2 dt \right)^{\frac12} \left( \int_{0}^1 t^{-1+r} dt\right)^{\frac12}}
\le \frac{1}{\sqrt{r}} \|v\|_{L^2(\Omega)} ,
\]
while the bound $K(t,v) \le \| u \|_{H^{-1}(\Omega)} \le C(\Omega) \| v \|_{L^2(\Omega)}$ leads to
\[
\int_{1}^\infty t^{-(1+\theta)} |K(t,v)| dt \le \frac{C(\Omega)}{\theta} \|v\|_{L^2(\Omega)} .
\]
Estimate \eqref{eq:embedding-negative} follows immediately from these two bounds. This concludes the proof.
\end{proof}
}
  
\subsection{Difference quotients in balls}\label{S:diff-balls}
We now characterize Besov spaces by means of first and second differences on balls. 
Given $\lambda >0$, we define the auxiliary domains
\begin{equation*} \label{eq:def_Omega_lambda}
  \Omega_\lambda = \{ x \in \Omega \colon d(x,\partial\Omega) > \lambda \},
  \quad
  \Omega^\lambda = \{ x \in \Rd \colon d(x,\partial\Omega) < \lambda \}.
\end{equation*}
Let $D = D_\rho(0)$ be the ball of radius $\rho$ centered at $0$. Given a function
$v\in L^p(\Omega)$ and direction $h\in D$, we consider the translation
$\tau(h) v(x) = v_h(x) := v(x+h)$ and first-order and second-order difference operators
$\delta_1(h)$ and $\delta_2(h)$ defined by
\begin{equation} \label{eq:translation-h-global}
  \delta_1(h) v(x) = v_h(x) - v(x),\quad
  \delta_2(h) v(x) = v_h(x) - 2v(x) + v_{-h}(x)
\end{equation}
for all $x \in \Omega_\rho$.

Besov semi-norms may be equivalently defined through difference quotients.
Since we are interested in fractional differentiability order $0<\sigma<2$,
we use second order differences to define the seminorms. For $p,q \in [1, \infty)$ we set
\begin{equation} \label{eq:centered-Besov-q}
[v]_{B^\sigma_{p,q}(\Omega)} := \left( q \sigma (2-\sigma) \int_{D} 
\frac{\| \delta_2(h) v \|^q_{L^p(\Omega_{|h|})}}{|h|^{d+ q \sigma}} dh
\right)^{1/q} ,
\end{equation}
while if $q = \infty$,
\begin{equation} \label{eq:centered-Besov}
[v]_{B^\sigma_{p,\infty}(\Omega)} := \sup_{h \in D} 
\frac{\| \delta_2(h) v \|_{L^p(\Omega_{|h|})}}{|h|^{\sigma}}.
\end{equation}
The scaling factor in \eqref{eq:centered-Besov-q} agrees with that in \eqref{eq:Besov-norm} for $\theta=\sigma/2$.
We emphasize that, even though in both \eqref{eq:centered-Besov-q} and \eqref{eq:centered-Besov} the norms depend on the radius $\rho$ of the ball $D$, the resulting seminorms are all equivalent.

The following result is classical in $\Rd$ \cite[Theorem 7.47]{AdamsFournier:2003}, and for Lipschitz domains one
can argue by using extension operators on Besov spaces (see \cite[Theorem 1, p. 381]{Nikolskii}).

\begin{lemma}[equivalence of Besov seminorms] \label{lemma:Besov}
Let $\Omega$ be a bounded Lipschitz domain, $\sigma \in (0,2)$, $p\in [1,\infty)$, $q \in [1,\infty]$ and $v \in L^p(\Omega)$. Then, the seminorm equivalence
\begin{gather*}
   |v|_{B^{\sigma}_{p,q}(\Omega)} \simeq [v]_{B^{\sigma}_{p,q}(\Omega)}
\end{gather*}      
is valid with constants that do not depend on $\sigma,p,q$.
\end{lemma}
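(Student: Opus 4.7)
The equivalence is classical, and my strategy is to reduce the problem on $\Omega$ to the well-known whole-space case via a bounded extension operator, in three steps.

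\emph{Step 1 (whole-space equivalence).} On $\Rd$ I would invoke the classical characterization of Besov seminorms through the second-order modulus of continuity
\[
|v|_{B^\sigma_{p,q}(\Rd)}^q \simeq q\sigma(2-\sigma) \int_0^\infty t^{-q\sigma}\,\omega_2(v,t)_p^q\,\frac{dt}{t}, \qquad \omega_2(v,t)_p := \sup_{|h|\le t}\|\delta_2(h) v\|_{L^p(\Rd)},
\]
with the usual modification when $q=\infty$; this is \cite[Theorem 7.47]{AdamsFournier:2003}. Switching to spherical coordinates $h = r\omega$ in \eqref{eq:centered-Besov-q} gives
\[
\int_D \frac{\|\delta_2(h)v\|^q_{L^p(\Rd)}}{|h|^{d+q\sigma}}\,dh = \int_0^\rho r^{-1-q\sigma}\!\int_{\partial B_1}\|\delta_2(r\omega)v\|^q_{L^p(\Rd)}\,d\sigma(\omega)\,dr,
\]
and the inner spherical average is equivalent to $\omega_2(v,r)_p^q$. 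The tail $r>\rho$ is absorbed using the crude bound $\omega_2(v,r)_p \le 4\|v\|_{L^p(\Rd)}$, which yields the desired equivalence in $\Rd$ with constants uniform in $\sigma, p, q$ thanks to the normalizations chosen in \eqref{eq:Besov-norm} and \eqref{eq:centered-Besov-q}. The case $q=\infty$ is analogous.

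\emph{Step 2 (easy direction on Lipschitz $\Omega$).} Let $E\colon B^\sigma_{p,q}(\Omega)\to B^\sigma_{p,q}(\Rd)$ be a bounded linear extension operator (Rychkov or Stein type), available on any bounded Lipschitz domain with norm independent of $\sigma, p, q$. For $x\in\Omega_{|h|}$, all three points $x, x\pm h$ lie in $\Omega$, so $\delta_2(h)v(x) = \delta_2(h)(Ev)(x)$. Extending the inner integration domain from $\Omega_{|h|}$ to $\Rd$ and applying Step 1 yields
\[
[v]_{B^\sigma_{p,q}(\Omega)} \le [Ev]_{B^\sigma_{p,q}(\Rd)} \simeq |Ev|_{B^\sigma_{p,q}(\Rd)} \lesssim |v|_{B^\sigma_{p,q}(\Omega)}.
\]

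\emph{Step 3 (reverse direction, the main obstacle).} The converse $|v|_{B^\sigma_{p,q}(\Omega)} \lesssim [v]_{B^\sigma_{p,q}(\Omega)}$ is the delicate one, since the right-hand side only samples $v$ at points at distance $\ge|h|$ from $\partial\Omega$ while the left-hand side is a genuine interpolation norm that sees all of $\Omega$ up to the boundary. Following \cite[Theorem 1, p. 381]{Nikolskii}, I would cover $\overline\Omega$ by finitely many balls $\{B_j\}$ such that either $B_j\Subset\Omega$ (handled directly by Step 1) or $\Omega\cap B_j$ is the epigraph of a Lipschitz function. On each boundary patch the Lipschitz character of $\partial\Omega$ provides, for every $x\in\Omega\cap B_j$, a solid cone of admissible directions $h$ with $x,x\pm h\in\Omega$; integrating \eqref{eq:centered-Besov-q} only over that cone yields a seminorm comparable to the one over the full ball $D$ by a rotation/scaling argument. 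Using a subordinate partition of unity and a reflection across the Lipschitz graph, I would construct a local extension whose full $\Rd$-seminorm is controlled by $[v]_{B^\sigma_{p,q}(\Omega)}$ plus $\|v\|_{L^p(\Omega)}$, and conclude by Step 1. The main technical obstacle is verifying that neither the patching nor the reflections introduce factors depending on $\sigma, p, q$; this is ensured because the geometric cone opening and reflection bi-Lipschitz constants depend only on $\Omega$ and $d$, so the normalizations carry through unchanged.
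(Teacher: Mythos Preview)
The paper does not actually prove this lemma: it is stated as a known result, preceded only by the sentence ``The following result is classical in $\Rd$ \cite[Theorem 7.47]{AdamsFournier:2003}, and for Lipschitz domains one can argue by using extension operators on Besov spaces (see \cite[Theorem 1, p.~381]{Nikolskii}).'' Your proposal is precisely an expansion of this citation---whole-space equivalence via Adams--Fournier, then passage to Lipschitz $\Omega$ via an extension operator and the Nikol'ski\u{\i} reflection/cone argument---so you are taking the same route the authors point to.

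One small caution: in Step~2 you conclude $|Ev|_{B^\sigma_{p,q}(\Rd)} \lesssim |v|_{B^\sigma_{p,q}(\Omega)}$, but extension operators are bounded in the \emph{full} norm, not the seminorm alone, so an $\|v\|_{L^p(\Omega)}$ term will appear (you correctly allow for this in Step~3). Also, the claim that the extension constant is uniform in $\sigma,p,q$ deserves a precise reference (e.g.\ Rychkov's universal extension), since this uniformity is what the lemma asserts and is not automatic for every extension construction.
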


We state an auxiliary result whose proof follows by interpolation between the trivial case $\sigma = 0$ (i.e. $L^p(\Omega)$) and the standard case $\sigma = 1$ (i.e. $W^1_p(\Omega)$).

\begin{lemma}[error estimate]\label{lemma:shift} Let \red{$p \in [1, \infty]$, $\sigma \in [0,1]$,} and $h \in D$. There exists $C > 0$ such that for any Lipschitz domain $\omega \subset \Rd$,
the translation operator $\delta_1(h)$ defined in \eqref{eq:translation-h-global} satisfies
\[
\| v - v_h \|_{L^p(\omega)} \le C |h|^\sigma \red{|v|_{W^\sigma_{p}(\omega^{|h|})} \quad \forall v \in W^\sigma_{p}(\omega^{|h|}).}
\]
\end{lemma}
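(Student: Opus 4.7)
My plan is to verify the two endpoint cases $\sigma=0$ and $\sigma=1$ by elementary means and then invoke real interpolation.

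For the trivial endpoint $\sigma = 0$, since $\omega + h \subset \omega^{|h|}$ for any $h \in D$, the triangle inequality immediately yields
\[
\| v - v_h \|_{L^p(\omega)} \le \| v \|_{L^p(\omega)} + \| v_h \|_{L^p(\omega)} \le 2 \| v \|_{L^p(\omega^{|h|})}.
\]
This amounts to the bound $\| \delta_1(h) \|_{L^p(\omega^{|h|}) \to L^p(\omega)} \le 2$, uniformly in $h\in D$.

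For the endpoint $\sigma = 1$, I would treat smooth functions first (say $v \in C^\infty(\overline{\omega^{|h|}})$) and extend by density. The fundamental theorem of calculus gives, for almost every $x \in \omega$,
\[
v(x+h) - v(x) = \int_0^1 \nabla v(x+th) \cdot h \, dt,
\]
and since $x + th \in \omega^{|h|}$ for $t \in [0,1]$, Minkowski's integral inequality applied to the $L^p(\omega)$ norm produces
\[
\| v - v_h \|_{L^p(\omega)} \le |h| \int_0^1 \| \nabla v(\cdot + th) \|_{L^p(\omega)} \, dt \le |h| \, |v|_{W^1_p(\omega^{|h|})}.
\]
This shows $\| \delta_1(h) \|_{W^1_p(\omega^{|h|}) \to L^p(\omega)} \le |h|$.

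To conclude, I would view $\delta_1(h)$ as a bounded linear operator from each of the spaces $L^p(\omega^{|h|})$ and $W^1_p(\omega^{|h|})$ into the single target space $L^p(\omega)$, with operator norms $2$ and $|h|$, respectively. Real interpolation (with index $\sigma \in (0,1)$ and third parameter equal to $p$) then gives a bounded operator from $(L^p(\omega^{|h|}), W^1_p(\omega^{|h|}))_{\sigma,p} = W^\sigma_p(\omega^{|h|})$ into $L^p(\omega)$ with norm controlled by $C \cdot 2^{1-\sigma} |h|^\sigma \le C |h|^\sigma$. This yields the claimed estimate. The only subtle point will be ensuring that real interpolation applies on the enlarged Lipschitz domain $\omega^{|h|}$ with uniform constants as $|h|$ varies in $D$, but since the identification $(L^p, W^1_p)_{\sigma,p} = W^\sigma_p$ holds on any Lipschitz domain and we only need the universal interpolation norm inequality, this causes no difficulty; the constant $C$ depends neither on $\sigma$, $p$, nor on the shape of $\omega$.
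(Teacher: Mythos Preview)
Your proposal is correct and follows exactly the approach indicated in the paper, which states only that the result ``follows by interpolation between the trivial case $\sigma = 0$ (i.e.\ $L^p(\Omega)$) and the standard case $\sigma = 1$ (i.e.\ $W^1_p(\Omega)$).'' You have simply filled in the endpoint estimates and the interpolation step that the paper leaves implicit.
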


Taking into account Lemma \ref{lemma:Besov} and Lemma \ref{lemma:shift}, it seems plausible to bound Besov seminorms by considering differences of fractional-order seminorms. This is the goal of the next proposition.

\begin{proposition}[reiteration of Besov seminorms] \label{prop:bound-Besov}
If $s \in (0,1)$, \red{$p \in [1, \infty]$, $\sigma \in [0,1]$,} 
and $\omega\subset\Rd$ is a Lipschitz domain, then
\[ \begin{split}
& |v|_{B^{s+\sigma}_{p,q}(\omega)} \lesssim \left( q (s+\sigma) (2-s-\sigma) \int_{D} \frac{\red{|v - v_h|^q_{W^s_{p}(\omega)}}}{|h|^{d + q\sigma}}  \, dh \right)^{1/q}, \quad q \in [1,\infty), \\ 
    & |v|_{B^{s+\sigma}_{p,\infty}(\omega)} \lesssim  \sup_{h \in D} \frac{\red{|v - v_h|_{W^s_{p}(\omega)}}}{|h|^{\sigma}} , \quad q = \infty.   
\end{split}\]
\end{proposition}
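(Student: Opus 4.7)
The plan is to reduce the second-order differences appearing in the characterization of Besov seminorms (Lemma \ref{lemma:Besov}) to first-order differences of the auxiliary function $w := v - v_h$, and then invoke the shift estimate of Lemma \ref{lemma:shift} with differentiability index $s$. The key algebraic identity is
\[
\delta_2(h) v(x) = v(x+h) - 2v(x) + v(x-h) = (v-v_h)(x-h) - (v-v_h)(x) = \delta_1(-h)[v - v_h](x),
\]
which rewrites a second difference of $v$ at scale $|h|$ as a first difference of $v-v_h$ at the same scale.

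Applying Lemma \ref{lemma:shift} to $w = v - v_h$ with step $-h$ and differentiability $s \in (0,1)$, and using that the relevant $|h|$-enlargement of $\omega_{|h|}$ is contained in $\omega$, one obtains
\[
\|\delta_2(h) v\|_{L^p(\omega_{|h|})} = \|w - w_{-h}\|_{L^p(\omega_{|h|})} \lesssim |h|^{s}\, |v - v_h|_{W^s_p(\omega)}.
\]
This is the main pointwise-in-$h$ reduction: the $s$-derivative cost is paid through the factor $|h|^{s}$, while the remaining regularity of $v$ is stored in the Sobolev seminorm of the shifted difference.

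For $q \in [1,\infty)$, Lemma \ref{lemma:Besov} combined with \eqref{eq:centered-Besov-q} yields the equivalence
\[
|v|^q_{B^{s+\sigma}_{p,q}(\omega)} \simeq q(s+\sigma)(2-s-\sigma) \int_D \frac{\|\delta_2(h) v\|^q_{L^p(\omega_{|h|})}}{|h|^{d+q(s+\sigma)}}\, dh.
\]
Substituting the bound from the previous step and cancelling $|h|^{qs}$ against $|h|^{d+q(s+\sigma)}$ to leave $|h|^{d+q\sigma}$ in the denominator gives precisely the claimed estimate after taking $q$-th roots. The case $q=\infty$ is entirely analogous: one uses \eqref{eq:centered-Besov} together with the same pointwise bound, and takes the supremum over $h \in D$.

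The main point requiring care is the bookkeeping of nested domains: the characterization \eqref{eq:centered-Besov-q}--\eqref{eq:centered-Besov} of the Besov seminorm integrates $\|\delta_2(h)v\|_{L^p}$ over $\omega_{|h|}$, and applying Lemma \ref{lemma:shift} to $v-v_h$ enlarges this to a set that must still be controlled by $\omega$ itself, so that the right-hand side of the proposition only involves $|v-v_h|_{W^s_p(\omega)}$. Once this is arranged, the remaining verification is a direct substitution, with no delicate estimate of the implicit constant in $\sigma, s, p, q$ required since the scaling factor $q(s+\sigma)(2-s-\sigma)$ is preserved verbatim from \eqref{eq:centered-Besov-q}.
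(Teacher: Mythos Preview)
Your proposal is correct and follows essentially the same approach as the paper: both rewrite the second-order difference as a first-order difference of $w=v-v_h$ (the paper uses $w=v_h-v$, which differs only by sign), invoke Lemma \ref{lemma:shift} with index $s$, and use the containment $(\omega_{|h|})^{|h|}\subset\omega$ to land the $W^s_p$-seminorm on $\omega$. Your write-up even makes the domain bookkeeping slightly more explicit than the paper does.
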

\begin{proof}
In view of Lemma \ref{lemma:Besov} and definitions \eqref{eq:centered-Besov-q} or \eqref{eq:centered-Besov},
if $t := \sigma + s < 2$ we infer that
\[
|v|_{B^{t}_{p,q}(\omega)} \simeq [v]_{B^t_{p,q}(\omega)} = \left\lbrace
\begin{aligned}
& \left( q t (2-t) \int_{D} 
\frac{\| v_h - 2v + v_{-h} \|^q_{L^p(\omega_{|h|})}}{|h|^{d+ q t}} dh
\right)^{1/q} & \mbox{ if } q \in [1,\infty), \\
& \sup_{D} 
\frac{\| v_h - 2v + v_{-h} \|_{L^p(\omega_{|h|})}}{|h|^t} & \mbox{ if } q = \infty.
\end{aligned}
\right.
\]
Letting $w = v_h - v$, we write
\[
\| v_h - 2v + v_{-h} \|_{L^p(\omega_{|h|})} = \| w - w_{-h}\|_{L^p(\omega_{|h|})}
\]
whence applying Lemma \ref{lemma:shift} (error estimate) to $w$ and observing that
$(\omega_{|h|})^{|h|} \subset \omega$ we obtain
\[
\| v_h - 2v + v_{-h} \|_{L^p(\omega_{|h|})} \lesssim |h|^s \red{|v - v_h|^q_{W^s_{p}(\omega)}}.
\]
This yields the asserted estimates.
\end{proof}

\rhn{The following estimate quantifies the precise blow-up of first differences $\delta_1(h)$ relative to second differences $\delta_2(h)$ in the definition of $\|\cdot\|_{{B}^\sigma_{p,q}(\Omega)}$ as $\sigma\to 1^-$. We state it now in the particular case $p=2$ and $q=\infty$, which is of interest later, but refer to \cite{Ditzian88} for a general statement and proof.

\begin{lemma}[Marchaud inequality]\label{L:Marchaud}
For all $\sigma \in (0,1)$ there holds
\begin{equation}\label{eq:Marchaud}
  \sup_{h\in D} \frac{\| \delta_1(h) v \|_{L^2(\Omega_{|h|})}}{|h|^{\sigma}} \lesssim
  \|v\|_{L^2(\Omega)} + 
  \frac{1}{\sqrt{1-\sigma}} \sup_{h\in D} \frac{\| \delta_2(h) v \|_{L^2(\Omega_{|h|})}}{|h|^{\sigma}}
  \qquad\forall v\in {B}^\sigma_{2,\infty}(\Omega),
\end{equation}  
\end{lemma}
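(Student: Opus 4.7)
The plan is to reduce to $\Rd$ via a Lipschitz-domain extension operator and then exploit an $L^2$-specific identity relating first and second differences through translation invariance. The parallelogram-type structure of the Hilbert norm is what promotes the classical Marchaud exponent $(1-\sigma)^{-1}$ to the improved $(1-\sigma)^{-1/2}$. Concretely, the identity to prove first is
\[
\|\delta_2(h) v\|_{L^2(\Rd)}^2 = 4\|\delta_1(h) v\|_{L^2(\Rd)}^2 - \|\delta_1(2h) v\|_{L^2(\Rd)}^2
\qquad \forall v\in L^2(\Rd),\ h\in \Rd.
\]
This can be verified by Plancherel, since the Fourier symbols yield $|\widehat{\delta_1(h) v}|^2 = 4\sin^2(h\cdot\xi/2)|\hat v|^2$ and $|\widehat{\delta_2(h)v}|^2 = 16\sin^4(h\cdot\xi/2)|\hat v|^2$, and the identity reduces to the trigonometric relation $\sin^2(2\alpha) = 4\sin^2\alpha\,(1-\sin^2\alpha)$. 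Equivalently, one can square the pointwise decomposition $\delta_1(h) v(x) = \tfrac12 \delta_1(2h)v(x) - \tfrac12\delta_2(h)v(x+h)$ and integrate over $\Rd$; translation invariance then kills the cross term.

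Next, I would solve the identity for $\|\delta_1(h)v\|_{L^2(\Rd)}^2$ and iterate $N$ times to obtain the telescoping formula
\[
\|\delta_1(h) v\|_{L^2(\Rd)}^2 = \sum_{j=0}^{N-1} \frac{\|\delta_2(2^j h) v\|_{L^2(\Rd)}^2}{4^{j+1}} + \frac{\|\delta_1(2^N h) v\|_{L^2(\Rd)}^2}{4^N},
\]
and choose $N$ minimal with $2^N|h|\ge\rho$, so that $4^{-N}\lesssim|h|^2/\rho^2$. Then bound the remainder by $\|\delta_1(2^N h)v\|_{L^2(\Rd)}\le 2\|v\|_{L^2(\Rd)}$ and the main sum by
\[
\sum_{j=0}^{N-1} \frac{\|\delta_2(2^j h) v\|_{L^2(\Rd)}^2}{4^{j+1}} \le \frac{B^2 |h|^{2\sigma}}{4}\sum_{j=0}^{\infty} 4^{-j(1-\sigma)} \lesssim \frac{B^2 |h|^{2\sigma}}{1-\sigma},
\]
where $B := \sup_{h'\in D}|h'|^{-\sigma}\|\delta_2(h')v\|_{L^2(\Rd)}$. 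Dividing by $|h|^{2\sigma}$, taking supremum over $h\in D$, and extracting the square root yields \eqref{eq:Marchaud} on $\Rd$; the square root is precisely what converts the $(1-\sigma)^{-1}$ from the geometric sum into $(1-\sigma)^{-1/2}$.

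To transfer the estimate to $\Omega$, I would invoke a Lipschitz-domain universal extension operator $E$, bounded $L^2(\Omega)\to L^2(\Rd)$ and $B^\sigma_{2,\infty}(\Omega)\to B^\sigma_{2,\infty}(\Rd)$ with constants depending only on $\Omega$. Since $\delta_1(h)v = \delta_1(h) Ev$ pointwise on $\Omega_{|h|}$, we have $\|\delta_1(h) v\|_{L^2(\Omega_{|h|})}\le \|\delta_1(h) Ev\|_{L^2(\Rd)}$; combining with the $\Rd$ estimate applied to $Ev$ and controlling $\sup_h|h|^{-\sigma}\|\delta_2(h) Ev\|_{L^2(\Rd)}$ by $\|v\|_{L^2(\Omega)}+\sup_h|h|^{-\sigma}\|\delta_2(h)v\|_{L^2(\Omega_{|h|})}$ via the equivalence in Lemma~\ref{lemma:Besov} delivers \eqref{eq:Marchaud}.

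The main obstacle is genuine: the geometric sum in the iteration has magnitude $\sim (1-\sigma)^{-1}$, which is what a standard $L^p$ Marchaud argument produces. The improvement to $(1-\sigma)^{-1/2}$ is specific to the Hilbert setting and hinges on keeping the telescoping at the level of \emph{squared} $L^2$ norms via the identity of the first paragraph, so that taking the square root only at the very end halves the exponent. A secondary technical concern is that the extension-operator norms must be controlled uniformly in $\sigma$ over compact sub-intervals of $(0,1)$, which holds for Rychkov-style universal extensions.
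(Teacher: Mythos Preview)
The paper does not supply its own proof of this lemma; it simply cites \cite{Ditzian88} for a general statement and proof in $L^p$. There is therefore nothing in the paper to compare against directly.

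Your argument is correct and is exactly the standard Hilbert-space sharpening of the Marchaud inequality. The exact identity
\[
\|\delta_2(h) v\|_{L^2(\Rd)}^2 = 4\|\delta_1(h) v\|_{L^2(\Rd)}^2 - \|\delta_1(2h) v\|_{L^2(\Rd)}^2
\]
(which you verify via Plancherel, or equivalently by squaring the pointwise decomposition and using translation invariance to kill the cross term) is what permits the telescoping to be carried out at the level of \emph{squared} norms. The geometric series then contributes a factor $(1-\sigma)^{-1}$, and the square root at the end halves the exponent to $(1-\sigma)^{-1/2}$. This is precisely the mechanism Ditzian exploits in the $L^2$ case; in general $L^p$ one only has an inequality in place of the identity, and the exponent stays at $(1-\sigma)^{-1}$.

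The reduction to $\Rd$ via a universal extension operator is also standard and matches the paper's own strategy in Lemma~\ref{lemma:Besov}. Your caveat about uniformity in $\sigma$ of the extension constants is well placed and easily dispatched: a Stein or Rychkov extension is bounded simultaneously on $L^2$ and on $W^2_2$, hence by real interpolation on all intermediate $B^\sigma_{2,\infty}$ with a constant independent of $\sigma\in(0,1)$; combined with the $\sigma$-independent equivalence constants in Lemma~\ref{lemma:Besov}, this closes the argument with the claimed blow-up rate.
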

}

\subsection{Localization of Besov norms}\label{S:localization}
We next show that Besov seminorms can be equivalently written as sums of norms over partitions, as long as the partitions have some overlap.

\begin{lemma}[localization]\label{L:localization}
Let $p, q \in [1, \infty]$ and $\sigma \in (0,2)$. Let $\{ D_j \}_{j = 1}^M$ be a finite covering of $\Omega$ by balls $D_j = D_\rho(x_j)$ of radius $\rho$ and center $x_j$. If $v\in L^p(\Omega)$, then $v \in B^\sigma_{p,q}(\Omega)$ if and only if $v \big|_{\Omega \cap D_j} \in B^\sigma_{p,q}(\Omega \cap D_j)$ for all $j = 1, \ldots, M$, and
\begin{equation} \label{eq:localization}
| v |_{B^\sigma_{p,q}(\Omega)}^p \simeq \sum_{j=1}^M | v |_{B^\sigma_{p,q}(\Omega \cap D_j)}^p.
\end{equation}
Moreover, fix $\delta \ge \rho$, consider a finite cover as above of $\Omega^\delta$, and let $v \colon \Rd \to \mathbb{R}$ satisfy $\rm{supp} \, v \subset \overline\Omega$. If $v\in L^p(\Omega)$, then $v \in \dot{B}^\sigma_{p,q}(\Omega)$ if and only if $v \big|_{D_j} \in B^\sigma_{p,q}(D_j)$ for all $j = 1, \ldots, M$, and
\begin{equation} \label{eq:localization2}
| v |_{\dot{B}^s_{p,q}(\Omega)}^p \simeq \sum_{j=1}^M | v |_{B^s_{p,q}(D_j)}^p.
\end{equation}
The equivalence constants above depend on $s,p,q,\Omega$ and the cover chosen.
\end{lemma}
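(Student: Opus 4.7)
The strategy is to work with the second-order difference-quotient characterization of $|v|_{B^\sigma_{p,q}}$ from Lemma \ref{lemma:Besov} and combine it with a Lebesgue number argument for the cover. Since the equivalence $|v|_{B^\sigma_{p,q}} \simeq [v]_{B^\sigma_{p,q}}$ in \eqref{eq:centered-Besov-q}--\eqref{eq:centered-Besov} holds for any auxiliary ball $D^\ast = D_{\rho_0}(0)$ of directions (with constants depending on $\rho_0$), I am free to take $\rho_0$ smaller than any prescribed length. Let $\eta > 0$ be the Lebesgue number of the finite cover $\{D_j\}_{j=1}^M$, i.e.\ any subset of the covered set of diameter less than $\eta$ lies in some $D_j$. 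I fix $\rho_0 := \eta / 4$ throughout. I will also assume, as is standard for such localization results on Lipschitz domains, that the balls $D_j$ are chosen small enough so that each $\Omega \cap D_j$ is itself a Lipschitz domain, which allows Lemma \ref{lemma:Besov} to be applied locally.

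The easy direction $\sum_j | v |_{B^\sigma_{p,q}(\Omega \cap D_j)}^p \lesssim | v |_{B^\sigma_{p,q}(\Omega)}^p$ follows from monotonicity of the seminorm under inclusion of the ambient domain: since $(\Omega \cap D_j)_{|h|} \subset \Omega_{|h|}$, we have $\|\delta_2(h) v\|_{L^p((\Omega\cap D_j)_{|h|})} \le \|\delta_2(h) v\|_{L^p(\Omega_{|h|})}$ for every $h \in D^\ast$. Substituting into \eqref{eq:centered-Besov-q} (or \eqref{eq:centered-Besov} if $q=\infty$) and summing over the finitely many $j$ gives the bound.

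The nontrivial direction $|v|_{B^\sigma_{p,q}(\Omega)}^p \lesssim \sum_j |v|_{B^\sigma_{p,q}(\Omega\cap D_j)}^p$ is the main step. For each $h \in D^\ast$, I partition $\Omega_{|h|}$ by introducing
\[
A_j(h) := \bigl\{x \in \Omega_{|h|} \colon \overline{D_{2|h|}(x)} \subset D_j \bigr\}.
\]
For any $x \in \Omega_{|h|}$ with $|h| < \rho_0$, the closed ball $\overline{D_{2|h|}(x)}$ has diameter $4|h| < \eta$, so by the Lebesgue number property it lies in some $D_j$; therefore $\{A_j(h)\}_{j=1}^M$ covers $\Omega_{|h|}$. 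Moreover, for $x \in A_j(h)$ both translates $x \pm h$ belong to $D_j$ and $\mathrm{dist}(x,\partial D_j) \ge 2|h|$, which combined with $\mathrm{dist}(x,\partial \Omega) > |h|$ yields $x \in (\Omega \cap D_j)_{|h|}$. Consequently
\[
\| \delta_2(h) v \|_{L^p(\Omega_{|h|})}^p \le \sum_{j=1}^M \| \delta_2(h) v \|_{L^p((\Omega\cap D_j)_{|h|})}^p.
\]
Raising this inequality to the power $q/p$, using the elementary equivalence $\sum_j a_j^r \simeq (\sum_j a_j)^r$ for $a_j\ge 0$ with constants depending only on $M$ and $r$, integrating against $|h|^{-d-q\sigma}\, dh$ over $D^\ast$, and applying Lemma \ref{lemma:Besov} on each piece $\Omega \cap D_j$ yields \eqref{eq:localization}. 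The case $q=\infty$ is entirely analogous with $\sup$ replacing the integral.

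For \eqref{eq:localization2} I would proceed identically, using the representation $|v|_{\dot{B}^\sigma_{p,q}(\Omega)} = |v|_{B^\sigma_{p,q}(\mathbb{R}^d)}$ together with its difference-quotient form over $\mathbb{R}^d$. The key observation is that, because $\mathrm{supp}\, v \subset \overline\Omega$, the second difference $\delta_2(h) v(x)$ vanishes unless $\{x, x\pm h\} \cap \overline\Omega \ne \emptyset$, i.e.\ unless $x \in \overline{\Omega^{|h|}}$. The hypothesis $\delta \ge \rho \ge \rho_0$ (after possibly shrinking $\rho_0$) ensures this set sits inside $\Omega^\delta$, so the Lebesgue number argument applies with the cover of $\Omega^\delta$. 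The local cutoff is now $A_j(h) \subset D_j$ directly, with no intersection against $\Omega$ being required since the local ambient domain is the ball $D_j$ itself.

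The main technical obstacle is bookkeeping: one must coordinate the choice of $\rho_0$ with the Lebesgue number of the cover so that, uniformly for $h \in D^\ast$, every point $x$ and both its translates $x \pm h$ land in a common local domain at positive distance from its boundary, legitimising the passage from the global seminorm on $\Omega$ (or $\mathbb{R}^d$) to the local seminorms on $\Omega \cap D_j$ (or $D_j$). Once this is in place, the proof reduces to a sub-additivity computation.
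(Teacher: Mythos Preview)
Your proposal is correct and follows essentially the same approach as the paper, which reduces the statement to the elementary $L^p$ localization $\|w\|_{L^p(\Omega)}^p \simeq \sum_{j=1}^M \|w\|_{L^p(\Omega\cap D_j)}^p$ applied to $w=\delta_2(h)v$, together with the equivalence of $\ell^q$ norms on $\mathbb{R}^M$. The paper's proof is a one-line sketch and does not spell out the Lebesgue-number reconciliation of the shrunken domains $\Omega_{|h|}$ and $(\Omega\cap D_j)_{|h|}$ that you carry out; your version simply makes that step rigorous.
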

\begin{proof}
The first assertion is a consequence of the equivalence $\| w \|_{L^p(\Omega)}^p \simeq \sum_{j=1}^M \| w \|_{L^p(\Omega \cap D_j)}^p$ 
applied to $w = v_h - 2v + v_{-h}$ and the equivalence of $\ell^q$ norms in $\mathbb{R}^M$.
For the second statement it suffices to realize that, since $\delta \ge \rho$, $\| v \|_{\dot{B}^s_{p,q}(\Omega)} = \| v \|_{B^s_{p,q}(\Rd)} = \| v \|_{B^s_{p,q}(\Omega^\delta)}$.
\end{proof}

  It is worth stressing the dependence of the equivalence constants in \eqref{eq:localization}
  and \eqref{eq:localization2} on the covering $\{B_j\}_{j=1}^M$. For integer-order Sobolev spaces
  $W^k_p(\Omega)$
  these constants only depend on the covering overlap but not on its cardinality $M$. For fractional
  Sobolev and Besov spaces the constants also depend on $M$ \cite{NguyenSickel:18}. However, in the
  arguments below $M$ is fixed.

\subsection{Difference quotients in cones}\label{S:diffs-cones}
%
Since translations $v_h$ of any $v\in\dot{B}^\sigma_{p,q}(\Omega)$ must belong to $\dot{B}^\sigma_{p,q}(\Omega)$
in the subsequent developments, we need to cope with three crucial questions. First, we must localize such translations, an issue we take over in Section \ref{S:local-translations}. Second, we must restrict the admissible set of directions $h$ from a ball $D_\rho(0)$ to cones to deal with the Lipschitz character of $\Omega$. Third, we must deal with second order differences within cones because $\sigma\in(0,2)$. We tackle the last two issues next.

\begin{definition}[generating set]\label{D:generating-set}
We say that a bounded set $D$ star-shaped with respect to the origin generates $\Rd$ if 
there exists $\rho_0(D) > 0$ such that for every $\rho \le \rho_0(D)$ and
every $h\in D_\rho(0)$, there exists $\{h_j\}_{j=1}^d \subset D\cup(-D)$ satisfying
\[
h = \sum_{j=1}^d h_j,
\quad
\sum_{j=1}^d |h_j| \le c |h|
\]
with a constant $c>0$ only dependent on $D$.
\end{definition}
An immediate property of generating sets is the following scaling invariance: if $D$ generates $\Rd$, $\rho_0(D)$ is as in the definition above, and $\lambda > 0$, then $\lambda D$ also generates $\Rd$ and $\rho_0 (\lambda D) = \lambda \rho_0(D)$.

Given $v\in L^p(\Rd)$, consider the translation $\tau(h) v = v_h$ and the first-order modulus of regularity
\[
\omega_1(h) = \omega_1(v,h) := \|\tau(h) v - v\|_{L^p(\Rd)} = \|\delta_1(h) v\|_{L^p(\Rd)}.
\]
The following elementary properties are valid:
\begin{itemize}
\item
  {\it Symmetry}: $\omega_1(-h) = \omega_1(h)$ (simply change variables $x\mapsto x+h$);

\item
  {\it Subadditivity}: $\omega_1(h_1+h_2) \le \omega_1(h_1) + \omega_1(h_2)$ (simply apply the triangle
  inequality $\|\tau(h_1+h_2) v-v\|_{L^p(\Omega)}\le \|\tau(h_1+h_2) v- \tau(h_2)v\|_{L^p(\Omega)} +
      \|\tau(h_2) v-v\|_{L^p(\Omega)}$.
\end{itemize}
Symmetry enables us to disregard the set $-D$ in Definition \ref{D:generating-set} and consider only $h\in D$ when computing $\omega_1(h)$. Subadditivity yields the following important relation.

\begin{lemma}[first-order difference quotient]\label{L:restrict-1}
  Let $D$ be a set that generates $\Rd$. Then, for $v\in L^p(\Rd)$ and $h \in D_\rho(0)$ with $h= \sum_{j=1}^d h_j$ as in Definition \ref{D:generating-set},
  \[
  \frac{\omega_1(h)}{|h|^\sigma} \le c^\sigma \sum_{j=1}^d \frac{\omega_1(h_j)}{|h_j|^\sigma}.
  \]
\end{lemma}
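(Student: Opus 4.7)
The plan is to combine the two elementary properties of $\omega_1$ (symmetry and subadditivity) with the controlled length of the components $h_j$ from Definition \ref{D:generating-set}. The argument is essentially a weighted counting estimate and I do not anticipate any genuine obstacle; the only subtle point is the interplay between the constant $c$ from the decomposition of $h$ and the exponent $\sigma$.

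First, I would iterate the subadditivity property $d-1$ times along the given decomposition $h = \sum_{j=1}^d h_j$ to conclude
\[
\omega_1(h) \;\le\; \sum_{j=1}^d \omega_1(h_j).
\]
Since some $h_j$ may lie in $-D$ rather than in $D$, I invoke the symmetry $\omega_1(-h_j)=\omega_1(h_j)$ to treat both cases uniformly, so that each term $\omega_1(h_j)$ is a legitimate modulus of regularity associated with a direction whose length $|h_j|$ is controlled.

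Next, for each index $j$ I multiply and divide by $|h_j|^\sigma$, which is harmless because $h_j \neq 0$ (otherwise the corresponding term just vanishes and can be dropped). This yields
\[
\omega_1(h) \;\le\; \sum_{j=1}^d |h_j|^\sigma \,\frac{\omega_1(h_j)}{|h_j|^\sigma}.
\]
Now I use the length bound $\sum_{j=1}^d |h_j| \le c\,|h|$ from Definition \ref{D:generating-set}, which in particular implies $|h_j| \le c\,|h|$ for every single $j$. Raising to the power $\sigma \in (0,1]$ preserves the inequality and gives $|h_j|^\sigma \le c^\sigma |h|^\sigma$, hence
\[
\omega_1(h) \;\le\; c^\sigma |h|^\sigma \sum_{j=1}^d \frac{\omega_1(h_j)}{|h_j|^\sigma}.
\]
Dividing by $|h|^\sigma$ produces the asserted estimate. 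The computation extends to $\sigma>1$ as well (we only use monotonicity of $t\mapsto t^\sigma$), so no restriction on $\sigma$ beyond positivity is needed for this step. This closes the proof.
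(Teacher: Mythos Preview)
Your proof is correct and follows exactly the approach indicated in the paper, which merely says to apply subadditivity of $\omega_1$ together with Definition~\ref{D:generating-set}; you have simply spelled out the details of that one-line argument.
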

\begin{proof}
Simply apply subadditivity of $\omega_1$ in conjunction with Definition \ref{D:generating-set}.
\end{proof}

Lemma \ref{L:restrict-1} reveals that we can restrict the set of directions $h$ from the
ball $D_\rho(0)$ to a generating
set $D$ of $\Rd$ in the definition of Besov spaces of order $0<\sigma<1$ by using first-order difference
quotients. However, we need to extend this property to second-order difference quotients in light of
our definitions \eqref{eq:centered-Besov-q} and \eqref{eq:centered-Besov} and corresponding
modulus of regularity $\omega_2$ for $v\in L^p(\Rd)$:
\[
\omega_2(h) = \omega_2(v,h) := \|\tau(h)v - 2v + \tau(-h)v\|_{L^p(\Rd)} = \|\delta_2(h) v\|_{L^p(\Rd)} .
\]
In the same spirit of Lemma \ref{L:restrict-1}, we have to express $\omega_2(h)$ for any
$h\in D_\rho(0)$ in terms of $\omega_2(h)$ for $h\in D$, but we cannot longer ignore the orientation
of $h$ allowed by the symmetry of $\omega_1$. Let $h=\sum_{j=1}^d h_j \in D_\rho(0)$ be arbitrary
and decompose the set $\{h_j\}_{j=1}^d\subset D$ of Definition \ref{D:generating-set}
as follows:
\[
h_j \in D \quad 1\le j\le m,
\qquad
h_j \in -D \quad m< j \le d.
\]
We further assume that $D$ is a {\it convex cone} to deduce $\sum_{j=1}^m h_j \in D$
and $\sum_{j=m+1}^d h_j \in -D$.
Therefore, we have to be able to express $\omega_2(h_1-h_2)$ for arbitrary directions $h_1,h_2\in D$
in terms of admissible directions $h \in D$. We tackle this next.

\begin{lemma}[second-order difference quotients]\label{L:restrict-2}
  Let $D$ be a convex cone generating $\Rd$. If $h_1,h_2\in D/2$, then
  $2h_1, 2h_2, h_1+h_2 \in D$ and 
  \[
  \omega_2(h_1-h_2) \le \frac12 \Big( \omega_2(2h_1) + 2 \omega_2(h_1+h_2) + \omega_2(2h_2) \Big).
  \]
\end{lemma}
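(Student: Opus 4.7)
The plan is to exploit the group property of translations, namely $\tau(a)\tau(b)=\tau(a+b)$, together with the fact that $\tau(a)$ is an isometry on $L^p(\Rd)$, so that any second-order difference operator conjugated by a translation still has $L^p$-norm equal to $\omega_2$ of the inner vector.

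First I would record the inclusions. Since $D$ is a convex cone containing the origin, one has $D/2\subset D$, and $D+D\subset D$ (because for $x,y\in D$ the midpoint $(x+y)/2$ lies in $D$ by convexity, and multiplication by $2$ preserves $D$). Hence $h_1,h_2\in D/2\subset D$ gives $2h_1,2h_2\in D$ and $h_1+h_2\in D$ directly.

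The algebraic core is the identity
\[
T_{2h_1}-2T_{h_1-h_2}+T_{-2h_2} \;=\; T_{h_1-h_2}\bigl(T_{h_1+h_2}-2E+T_{-h_1-h_2}\bigr),
\]
which follows from $2h_1=(h_1-h_2)+(h_1+h_2)$ and $-2h_2=(h_1-h_2)+(-h_1-h_2)$, where $T_h:=\tau(h)$ and $E$ is the identity. A symmetric manipulation gives
\[
T_{-2h_1}-2T_{h_2-h_1}+T_{2h_2} \;=\; T_{h_2-h_1}\bigl(T_{-h_1-h_2}-2E+T_{h_1+h_2}\bigr).
\]
Since translations are $L^p$-isometries, applying either identity to $v$ and taking $L^p(\Rd)$-norms yields $\omega_2(h_1+h_2)$.

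Next I would add the two operator identities and reorganize. Collecting terms one finds
\[
2\bigl(T_{h_1-h_2}-2E+T_{h_2-h_1}\bigr)
= \bigl(T_{2h_1}-2E+T_{-2h_1}\bigr)+\bigl(T_{2h_2}-2E+T_{-2h_2}\bigr)
\]
\[
\qquad - \bigl(T_{2h_1}-2T_{h_1-h_2}+T_{-2h_2}\bigr) - \bigl(T_{-2h_1}-2T_{h_2-h_1}+T_{2h_2}\bigr),
\]
after regrouping the $\pm 4E$ on both sides. Applying this identity to $v$, taking $L^p$-norms, using the triangle inequality, and invoking the two computations above gives
\[
2\,\omega_2(h_1-h_2) \le \omega_2(2h_1)+\omega_2(2h_2)+2\,\omega_2(h_1+h_2),
\]
which is the claim after dividing by $2$.

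The only subtle point is spotting the right linear combination; once the decompositions $\pm 2h_i=(\pm h_1\mp h_2)+(\pm(h_1+h_2))$ are noticed, the remainder is purely mechanical. The convex-cone hypothesis is used solely to ensure that the three vectors $2h_1$, $2h_2$ and $h_1+h_2$ used on the right-hand side are admissible directions in $D$, so that the estimate is meaningful when later restricted to $h_1,h_2$ ranging over a generating convex cone.
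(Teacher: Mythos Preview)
Your proof is correct and follows essentially the same approach as the paper: the paper states directly the identity $2\delta_2(h_1-h_2)v = \delta_2(2h_1)v + \delta_2(2h_2)v - \delta_2(h_1+h_2)\bigl(\tau(h_1-h_2)v + \tau(h_2-h_1)v\bigr)$, which is exactly your operator identity once one notes that $\delta_2$ commutes with translations. One minor slip: the set $D$ in the paper is a \emph{truncated} (bounded) cone, so neither $D+D\subset D$ nor ``multiplication by $2$ preserves $D$'' holds as you wrote; nevertheless your actual conclusions $2h_1,2h_2\in D$ (by definition of $D/2$) and $h_1+h_2=\tfrac{1}{2}(2h_1+2h_2)\in D$ (by convexity) are correct.
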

\begin{proof}
The assertion is a trivial consequence of the elementary relation
\[
2\delta_2(h_1-h_2)v = \delta_2(2h_1)v + \delta_2(2h_2)v - \delta_2(h_1+h_2)
\Big( \tau(h_1-h_2) v + \tau(h_2-h_1) v \Big),
\]
the definition of $\omega_2$ and the property $\|\tau(h)v\|_{L^p(\Rd)} = \|v\|_{L^p(\Rd)}$.
\end{proof}

Given a set $\Lambda \subset \Rd$, $p \in [1,\infty)$, $q = \infty$, and $\sigma \in (0,2)$, let $[\cdot ]_{B^\sigma_{p,\infty}(\Rd; \Lambda)}$ denote the Besov seminorm in \eqref{eq:centered-Besov}, computed by taking second-order differences over $\Lambda $. We next compare Besov seminorms for $\Lambda$ being either a ball or a cone.
  
\begin{proposition}[Besov seminorms using cones]
Let $D$ be a convex cone generating $\Rd$ so that $D \subset D_{\rho_1} = D_{\rho_1}(0)$ and let $\rho_0=\rho_0(D)$ be as in Definition \ref{D:generating-set}. Then, every function $v$ satisfies
\begin{equation} \label{eq:equivalence-Besov-cones}
\frac{1}{c^{\sigma}(2^{\sigma}+ 1)} [v]_{B^\sigma_{p,\infty}(\Rd; D_{\rho_0/2})} \le [v]_{B^\sigma_{p,\infty}(\Rd; D)} \le [v]_{B^\sigma_{p,\infty}(\Rd; D_{\rho_1})},
\end{equation}
where $c$ is the constant from Definition \ref{D:generating-set}.
Consequently, we have $[v]_{B^\sigma_{p,\infty}(\Rd; D)} \simeq |v|_{B^\sigma_{p,\infty}(\Rd)}$.
\end{proposition}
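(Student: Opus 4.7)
The right-hand inequality $[v]_{B^\sigma_{p,\infty}(\Rd; D)} \le [v]_{B^\sigma_{p,\infty}(\Rd; D_{\rho_1})}$ is immediate, since both seminorms are suprema of $\omega_2(h)/|h|^\sigma$ over $h$ in the indicated set and $D \subset D_{\rho_1}$ by hypothesis; restricting to fewer directions can only decrease the supremum. The real content is the left inequality, and my plan is to combine the generating-set decomposition of Definition \ref{D:generating-set} with the three-term estimate of Lemma \ref{L:restrict-2} in order to reduce a second-order difference along an arbitrary direction $h\in D_{\rho_0/2}(0)$ to three second-order differences along directions that actually lie in the convex cone $D$.

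Concretely, fix $h \in D_{\rho_0/2}(0)$. Applying Definition \ref{D:generating-set} at scale $\rho_0/2 \le \rho_0$, I write $h = \sum_{j=1}^d h_j$ with each $h_j \in D \cup (-D)$ and $\sum_j |h_j| \le c|h|$. Grouping the terms according to sign I set
\[
a := \sum_{h_j \in D} h_j, \qquad b := -\!\!\sum_{h_j \in -D} h_j,
\]
so that $h = a-b$ with $a, b \in D$ by convexity of the cone. The key size information preserved by this grouping is $|a| + |b| \le \sum_j |h_j| \le c|h|$, so in particular $|a+b| \le c|h|$ while $|2a|, |2b| \le 2c|h|$. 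Provided $|h|$ is small enough that $2a, 2b, a+b$ stay inside $D \subset D_{\rho_1}$ (see the final paragraph below), Lemma \ref{L:restrict-2} applies with $h_1 = a$ and $h_2 = b$ and yields
\[
\omega_2(h) \le \tfrac12 \bigl( \omega_2(2a) + 2\omega_2(a+b) + \omega_2(2b) \bigr).
\]
Each of the three resulting differences is along an admissible direction in $D$, so I bound it by $[v]_{B^\sigma_{p,\infty}(\Rd; D)}$ times the direction length raised to $\sigma$. Using the sharper bound $|a+b|\le c|h|$ for the middle term and $|2a|, |2b| \le 2c|h|$ for the other two, the three contributions combine exactly to $c^\sigma (2^\sigma+1)|h|^\sigma [v]_{B^\sigma_{p,\infty}(\Rd; D)}$. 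Dividing by $|h|^\sigma$ and taking the supremum over $h \in D_{\rho_0/2}(0)$ completes the left inequality.

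The ``Consequently'' clause follows at once: by the remark after \eqref{eq:centered-Besov}, the ball-based seminorms $[v]_{B^\sigma_{p,\infty}(\Rd; D_\rho)}$ for different radii $\rho$ are mutually equivalent, and each is equivalent to $|v|_{B^\sigma_{p,\infty}(\Rd)}$ by Lemma \ref{lemma:Besov}. The two-sided bound \eqref{eq:equivalence-Besov-cones} therefore sandwiches $[v]_{B^\sigma_{p,\infty}(\Rd; D)}$ between two equivalent ball-based seminorms, giving the stated equivalence.

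The one point requiring care is the geometric bookkeeping ensuring that $2a$, $2b$, and $a+b$ genuinely lie in $D$, not just in the ambient infinite cone direction. Convexity of the cone handles the direction; what remains is the length constraint $|2a|, |2b|, |a+b| \le \rho_1$, which I showed above is controlled by $c|h| \le c\rho_0/2$. This reduces the matter to the compatibility $c\rho_0 \le \rho_1$ between the two radii, which can always be absorbed into the choice of $\rho_0(D)$ in Definition \ref{D:generating-set} (shrinking it by a factor proportional to $c$ if necessary). Aside from this size bookkeeping, the argument is essentially algebraic, driven by the decomposition of Definition \ref{D:generating-set} and the three-term identity behind Lemma \ref{L:restrict-2}.
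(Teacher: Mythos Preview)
Your argument is correct and follows essentially the same route as the paper's proof: the right inequality is immediate from the set inclusion, and for the left inequality you decompose $h=a-b$ with $a,b$ in the cone, invoke Lemma~\ref{L:restrict-2}, and combine the three resulting terms with the same arithmetic to produce the constant $c^\sigma(2^\sigma+1)$. Your final paragraph on the length constraint $2a,2b,a+b\in D$ is a point the paper leaves implicit, so if anything you are slightly more careful there.
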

\begin{proof}
The fact that $[v]_{B^\sigma_{p,\infty}(\Rd; D)} \le [v]_{B^\sigma_{p,\infty}(\Rd; D_{\rho_1})}$ is an obvious consequence of the set inclusion $D \subset D_{\rho_1}$. Next, let us fix $h \in D_{\rho_0/2}$, and decompose it according to Definition \ref{D:generating-set}. Specifically, by the discussion preceding Lemma \ref{L:restrict-2} (second-order difference quotients), we write $h = h_1 - h_2$, with $h_1, h_2 \in D/2$ and $|h_1|+|h_2| \le c |h|$. We have
\[ \begin{aligned}
\frac{\omega_2(h)}{|h|^\sigma} & \le \frac1{2|h|^\sigma} \Big( \omega_2(2h_1) + 2 \omega_2(h_1+h_2) + \omega_2(2h_2) \Big) \\
& \le \frac{c^{\sigma}}{2} \Big( \frac{\omega_2(2h_1)}{|h_1|^\sigma} + \frac{2 \omega_2(h_1+h_2)}{|h_1 + h_2|^\sigma} + \frac{\omega_2(2h_2)}{|h_2|^\sigma} \Big).
\end{aligned} \]
Because $2h_1, h_1 + h_2, h_2 \in D$, we immediately obtain the upper bound
\[
\frac{\omega_2(h)}{|h|^\sigma} \le  \frac{c^{\sigma}}{2} (2^{\sigma + 1}+ 2) \sup_{\tilde h \in D} \frac{\omega_2(\tilde h)}{|\tilde h|^\sigma}
= c^{\sigma} (2^{\sigma}+ 1) \, [v]_{B^\sigma_{p,\infty}(\Rd; D)}.
\]
The first inequality in \eqref{eq:equivalence-Besov-cones} follows upon taking supremum over $h \in D_{\rho_0/2}$.

The second statement in the proposition is a consequence of Lemma \ref{lemma:Besov} and \eqref{eq:equivalence-Besov-cones}.
\end{proof}

\subsection{Localized translations and admissible directions}\label{S:local-translations}

Global translations such as \eqref{eq:translation-h-global} are not appropriate to capture the local behavior of solutions of \eqref{eq:Dirichlet}. Instead, we shall operate by localizing the translations and restricting the admissible directions $h$, as proposed by Savar\'e in \cite{Savare98}. Importantly, the former can be achieved by means of a convex combination between the identity operator and a translation, where the factor is related to a given cut-off function.

\begin{definition}[localized translation operator] \label{def:translation-operator}
For every function $v \colon \Omega \to \mathbb{R}$ we denote by $\tilde v$ its extension by zero outside $\Omega$ and, according to \eqref{eq:translation-h-global}, use the notation
\begin{equation*} \label{eq:translation-h}
v_h (x) = \tilde v (x + h) \quad x, h \in \Rd . 
\end{equation*}
Given $x_0$ and $\rho$, let $D_\rho(x_0)$ be the ball of radius $\rho$ and center $x_0$. We fix a cut-off function $\phi$ such that $0 \le \phi \le 1$, $\phi \equiv 1$ on $D_\rho (x_0)$, $\supp \phi \subset D _{2\rho}(x_0)$.
For those $x_0$ and $\rho$, given $h \in \Rd$, we define the localized translation operator
\begin{equation} \label{eq:translation-operator}
T_h v = \phi v_h + (1 - \phi) v .
\end{equation}
\end{definition}

\red{We now consider some variants of Lemma \ref{lemma:shift} by using this localized translation operator.  

\begin{lemma}[error estimate for $T_h$] \label{lemma:shift-Th}
Let $T_h$ be given according to Definition \ref{def:translation-operator}. Then, for every $h \in \Rd$, $\sigma\in[0,1]$,  $q\in[1,\infty]$, and $\gamma \in (-1,1)$ we have 
\begin{equation}\label{eq:translation-Besov}
  \| v - T_h v \|_{B^{\gamma}_{2,q}(D_{2\rho} (x_0))} \lesssim
  |h|^\sigma \|v\|_{B^{\gamma+\sigma}_{2,q}(D_{3\rho} (x_0))}
  \quad\forall \, v\in B^{\gamma+\sigma}_{2,q}(D_{3\rho} (x_0)).
\end{equation}
The hidden constant above is independent of $\sigma$ and $q$, but may blow up as $\gamma\to\pm1$.

Additionally, for every $\sigma \in (0,1)$ we have the estimate
\begin{equation} \label{eq:translation-L2}  
\| v - T_h v \|_{L^2(D_{2\rho}(x_0))} \lesssim \frac{|h|^\sigma}{\sqrt{1-\sigma}} \|v\|_{B^\sigma_{2,\infty}(D_{3\rho}(x_0))} \quad \forall v \in B^\sigma_{2,\infty}(D_{3\rho}(x_0)).
\end{equation}
\end{lemma}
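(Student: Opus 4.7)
The starting point is the pointwise identity $v - T_h v = -\phi\,\delta_1(h) v$, which follows directly from Definition \ref{def:translation-operator}. Since $\phi$ is supported in $D_{2\rho}(x_0)$, both estimates reduce to bounding $\phi\,\delta_1(h)v$ in the appropriate norm on $D_{2\rho}(x_0)$. The overall strategy is to decouple the smooth cut-off $\phi$ (which should cost only a multiplicative constant) from the first-order difference $\delta_1(h)$ (which should produce the factor $|h|^\sigma$). I will restrict to $|h|\le\rho$, since otherwise both estimates follow at once from uniform boundedness of $\tau(h)-I$ on the relevant space, combined with the embeddings between Besov spaces.

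For \eqref{eq:translation-Besov} the plan is a two-step argument. First, since $\phi\in C^\infty_c(\Rd)$ with bounded derivatives, multiplication by $\phi$ is continuous on $B^\gamma_{2,q}$ for every $\gamma\in(-1,1)$ with constant independent of $q$: for $\gamma\ge 0$ this is classical, and for $\gamma\in(-1,0)$ one uses the interpolation characterization \eqref{eq:Besov-intermediate} together with the fact that $\phi$ is a multiplier on $W^1_2$ and, by transposition, on $W^{-1}_2$. Second, the shift estimate
\[
\|\delta_1(h) v\|_{B^\gamma_{2,q}(D_{2\rho}(x_0))} \lesssim |h|^\sigma \|v\|_{B^{\gamma+\sigma}_{2,q}(D_{3\rho}(x_0))}
\]
would be obtained by interpolating between the two extreme cases $\sigma=0$ (trivial translation invariance, constant $2$) and $\sigma=1$ (fundamental theorem of calculus, $\delta_1(h) v=\int_0^1 h\cdot\nabla v(\cdot+th)\,dt$, constant $|h|$), using that the Besov norm \eqref{eq:Besov-norm} scales correctly in $\sigma$ and $q$. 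The enlargement to $D_{3\rho}(x_0)$ accommodates the translated points $x+h$ for $x\in D_{2\rho}(x_0)$ and $|h|\le\rho$.

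For \eqref{eq:translation-L2} the argument is shorter. Since $0\le\phi\le1$ we have $\|v-T_h v\|_{L^2(D_{2\rho}(x_0))}\le\|\delta_1(h) v\|_{L^2(D_{2\rho}(x_0))}$, and applying the Marchaud inequality \eqref{eq:Marchaud} on $D_{3\rho}(x_0)$ yields
\[
\|\delta_1(h) v\|_{L^2(D_{2\rho}(x_0))} \lesssim |h|^\sigma\Big(\|v\|_{L^2(D_{3\rho}(x_0))} + (1-\sigma)^{-1/2}\,[v]_{B^\sigma_{2,\infty}(D_{3\rho}(x_0))}\Big),
\]
and both right-hand terms are absorbed into $(1-\sigma)^{-1/2}\|v\|_{B^\sigma_{2,\infty}(D_{3\rho}(x_0))}$, proving \eqref{eq:translation-L2}.

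The main obstacle is the multiplier claim for negative $\gamma$ combined with the $q$-uniformity of the interpolation constant for the shift estimate. Both are best handled by systematically working within the interpolation-scale formulation \eqref{eq:Besov-intermediate} and by exploiting the normalization in \eqref{eq:Besov-norm}, which yields robust constants whenever $\gamma$ stays in a compact subinterval of $(-1,1)$; this is precisely the qualitative behavior asserted in the statement, namely constants independent of $\sigma$ and $q$ but potentially blowing up as $\gamma\to\pm1$.
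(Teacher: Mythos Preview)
Your argument for \eqref{eq:translation-L2} is essentially identical to the paper's: the identity $T_hv-v=\phi\,\delta_1(h)v$, the trivial bound $|\phi|\le 1$, and an application of the Marchaud inequality \eqref{eq:Marchaud}.

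For \eqref{eq:translation-Besov} your approach is correct but differs from the paper's in organization. You factor the operator as (multiplication by $\phi$)$\,\circ\,\delta_1(h)$ and treat the two pieces separately: a multiplier lemma for $\phi$ on $B^\gamma_{2,q}$ with $\gamma\in(-1,1)$, followed by an interpolation in $\sigma$ for the pure shift $\delta_1(h)$. The paper instead interpolates the composite operator $T_h-I$ directly: it first records the two endpoint bounds
\[
\|v-T_hv\|_{H^{-1}(D_{2\rho})}\lesssim |h|\,\|v\|_{L^2(D_{3\rho})},\qquad
\|v-T_hv\|_{H^{1}(D_{2\rho})}\lesssim |h|\,\|v\|_{H^2(D_{3\rho})},
\]
interpolates in $\gamma$ to obtain $\|v-T_hv\|_{B^\gamma_{2,q}}\lesssim |h|\,\|v\|_{B^{\gamma+1}_{2,q}}$, and then interpolates this against the trivial $\sigma=0$ bound. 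The paper's route is a bit shorter because it never isolates the multiplier step; your route is more modular and makes the role of $\phi$ explicit, at the cost of invoking (or proving) the multiplier property of smooth compactly supported functions on negative-order Besov spaces. Both arguments yield constants independent of $\sigma,q$ and degenerating only as $\gamma\to\pm1$, for exactly the reason you identify.
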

\begin{proof}
Because $T_h v - v = \phi \, \delta_1(h) v$ and $\supp \phi \subset D _{2\rho}(x_0)$, we have the following estimates:
\[ \begin{split}
\| v- T_h v\|_{H^{-1}(D_{2\rho}(x_0))} \lesssim |h| \|v\|_{L^2(D_{3\rho}(x_0))} , \\
\| v- T_h v\|_{H^{1}(D_{2\rho}(x_0))} \lesssim |h| \|v\|_{H^2(D_{3\rho}(x_0))} .
\end{split}\]
Let $\gamma \in (-1,1)$ and $q \in [1, \infty]$. By interpolation (cf. \eqref{eq:interpolation_Besov}), we obtain
\[
\| v- T_h v\|_{B^{\gamma}_{2,q}(D_{2\rho}(x_0))} \lesssim |h| \|v\|_{B^{1+\gamma}_{2,q}(D_{3\rho}(x_0))} ,
\]
while
\[
\| v- T_h v\|_{B^{\gamma}_{2,q}(D_{2\rho}(x_0))} \lesssim \|v\|_{B^{\gamma}_{2,q}(D_{3\rho}(x_0))},
\]
is trivial. Thus, \eqref{eq:translation-Besov} follows by interpolating between the latter two estimates.

To prove \eqref{eq:translation-L2}, we again exploit the fact that $T_h v - v = \phi \, \delta_1(h) v$ and combine \eqref{eq:centered-Besov} with the Marchaud inequality \eqref{eq:Marchaud}.
\end{proof}
}


Having localized the translation, a missing key ingredient to handle the Lipchitz character of $\Omega$ is to determine a convex cone $D$ of admissible directions. For the operator $T_h$ in \eqref{eq:translation-operator}, this boils down to determining the vectors $h$ with respect to which translate and yet remain in $\Omega$.

\begin{definition}[admissible outward vectors]\label{D:admissible-vectors}
For every $x_0 \in \Rd$ and $\rho \in (0,1]$, we define the set of admissible outward vectors
\[
\calO_\rho(x_0) = \{ h \in \Rd \colon |h|\le \rho, (D_{3\rho}(x_0)\setminus\Omega) + th \subset \Omega^c, \ \forall t \in [0,1] \}.
\]
\end{definition}

The set $\calO_\rho(x_0)$ gives the admissible translations in the sense that, if, given $x_0 \in \Omega$ and $\rho \in (0,1]$, we fix $\phi$ and define $T_h$ according to \eqref{eq:translation-operator} for some $h \in \calO_\rho(x_0)$, then $T_h v \in \dot{B}^\sigma_{p,q}(\Omega)$ for all $v \in \dot{B}^\sigma_{p,q}(\Omega)$. Indeed, it is clear from its definition that if $v \in B^\sigma_{p,q}(\Rd)$ then $T_h v \in B^\sigma_{p,q}(\Rd)$ for all $h \in \Rd$; moreover, if $h \in \calO_\rho(x_0)$, for a.e $x \in \Omega^c$ we have
\[ \begin{split}
& x \in \Omega^c \cap D_{3\rho}(x_0) \Rightarrow x+h \in \Omega^c \Rightarrow T_h v (x) = \phi(x) v (x+h) + (1 - \phi(x)) v(x) = 0, \\
& x \in \Omega^c \setminus D_{3\rho}(x_0) \Rightarrow \phi(x) = 0 \Rightarrow T_h v (x) = v(x) = 0.
\end{split} \]

We now define the admissible convex cone $C(x_0)$ for each $x_0\in\Rd$. We rely on the following
uniform cone property satisfied by bounded Lipschitz domains;
we refer the reader to \cite[\S 1.2.2]{Grisvard}.

\begin{proposition}[uniform cone property] \label{prop:cone-property}
If $\Omega$ is a bounded Lipschitz domain, then there exist $\rho \in (0,1]$, $\theta \in (0, \pi]$ and a map $\vn \colon \Rd \to S^{d-1}$ such that, for every $x_0 \in \Rd$,
\[
\calC_\rho (\vn(x_0), \theta) := \{ h \in \Rd \colon  |h| \le \rho, \ h \cdot \vn(x_0) \ge |h| \cos \theta \} 
\subset \calO_\rho(x_0).
\]
\end{proposition}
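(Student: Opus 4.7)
The plan is to reduce the statement to the Lipschitz graph representation of $\partial\Omega$ and extract the uniform constants via compactness. First, cover the compact set $\partial\Omega$ by finitely many open sets $U_1,\dots,U_N$ such that on each $U_k$, after a rigid motion, the boundary $\partial\Omega\cap U_k$ is the graph of a Lipschitz function $\psi_k$ of Lipschitz constant at most $L$, with $\Omega\cap U_k$ lying strictly below the graph along the ``vertical'' axis and $\Omega^c\cap U_k$ lying above. Denote by $e_d^{(k)}$ the corresponding upward unit vector (pointing into $\Omega^c$). Let $\delta>0$ be a Lebesgue number for this cover, so that every set of diameter less than $\delta$ meeting $\partial\Omega$ sits inside some $U_k$.

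Next I fix the parameters. Choose $\rho\in(0,1]$ with $8\rho<\delta$ and $\theta\in(0,\pi/2)$ with $\tan\theta<1/L$. For $x_0\in\Rd$, define $\vn(x_0)$ as follows. If $d(x_0,\partial\Omega)\ge 4\rho$, set $\vn(x_0)=e_1$; the ball $D_{4\rho}(x_0)$ does not meet $\partial\Omega$, so by connectedness it lies entirely in $\Omega$ or entirely in $\Omega^c$. In the former case $D_{3\rho}(x_0)\setminus\Omega=\emptyset$ and the inclusion $\calC_\rho(\vn(x_0),\theta)\subset\calO_\rho(x_0)$ holds vacuously, while in the latter $(D_{3\rho}(x_0)\setminus\Omega)+th\subset D_{4\rho}(x_0)\subset\Omega^c$ for every $|h|\le\rho$ and $t\in[0,1]$. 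If instead $d(x_0,\partial\Omega)<4\rho$, pick $\bar x\in\partial\Omega$ with $|x_0-\bar x|<4\rho$; then $D_{4\rho}(x_0)$ has diameter $8\rho<\delta$ and meets $\partial\Omega$, so it lies in some $U_k$, and I set $\vn(x_0)=e_d^{(k)}$.

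The core computation verifies the cone inclusion in this boundary case. Fix $h\in\calC_\rho(\vn(x_0),\theta)$, $t\in[0,1]$, and $y\in D_{3\rho}(x_0)\setminus\Omega$; the task is to show $y+th\in\Omega^c$. Both $y$ and $y+th$ lie in $D_{4\rho}(x_0)\subset U_k$ since $|h|\le\rho$, so I can work in local coordinates and write $y=(y',y_d)$, $h=(h',h_d)$ relative to $e_d^{(k)}=\vn(x_0)$. Membership $y\in\Omega^c$ gives $y_d\ge\psi_k(y')$, while the cone condition gives $h_d\ge|h|\cos\theta$ and $|h'|\le|h|\sin\theta$. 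The Lipschitz property of $\psi_k$ then yields
\[
  (y+th)_d-\psi_k(y'+th')\ge\bigl(y_d-\psi_k(y')\bigr)+t\bigl(h_d-L|h'|\bigr)\ge t|h|\bigl(\cos\theta-L\sin\theta\bigr)\ge 0,
\]
using $\tan\theta<1/L$ in the last step. Hence $y+th\in\Omega^c$, establishing $h\in\calO_\rho(x_0)$.

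The main obstacle I anticipate is organizational: guaranteeing that the translated point $y+th$ stays inside the same chart $U_k$ where the graph representation is valid, uniformly in $x_0$. This is handled by shrinking $\rho$ below one eighth of the Lebesgue number of the atlas, which forces $D_{4\rho}(x_0)$ into a single chart whenever it touches $\partial\Omega$. Once this is arranged, finiteness of the atlas provides a uniform Lipschitz constant $L$, and hence a uniform cone opening $\theta$ independent of $x_0$; the map $\vn$ need not be continuous, so no gluing across chart overlaps is required.
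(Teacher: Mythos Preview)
Your argument is correct. The paper does not actually prove this proposition: it simply states the result and refers the reader to Grisvard's monograph. What you have written is the standard direct verification from the Lipschitz graph definition, and it goes through as stated. The only places worth tightening are cosmetic: the Lebesgue-number claim you use (``every set of diameter $<\delta$ meeting $\partial\Omega$ lies in a single chart'') is a mild strengthening of the textbook statement, but it follows immediately from compactness of $\partial\Omega$ and openness of the $U_k$ in $\Rd$ by looking at $\inf_{x\in\partial\Omega}\max_k d(x,U_k^c)>0$; and your final inequality yields $y+th\notin\Omega$ rather than $y+th\in\Rd\setminus\overline\Omega$, but this discrepancy only concerns the measure-zero set $\partial\Omega$ and is immaterial for the use of $\calO_\rho(x_0)$ in Definition~\ref{def:translation-operator} (where one only needs $T_hv$ to vanish a.e.\ in $\Omega^c$).
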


An obvious consequence of Proposition \ref{prop:cone-property} for bounded Lipschitz domains is that all directions within $\calC_\rho (\vn(x_0), \theta)$ are admissible outward vectors starting from any point $x_0 \in \Rd$. Moreover, $\calC_\rho (\vn(x_0), \theta)$ is symmetric with respect to its axis $\vn(x_0)$ and with fixed opening $\theta$, whence
\[
  C(x_0) = C_\rho(x_0) := \calC_\rho (\vn(x_0), \theta)
\]
is a convex cone generating $\Rd$ according to Definition \ref{D:generating-set}. In view of the discussion in Section \ref{S:diffs-cones}, we can replace the ball $D=D_\rho(0)$ by $C_\rho(x_0)$ in the definition of Besov seminorms \eqref{eq:centered-Besov-q} or \eqref{eq:centered-Besov} thereby retaining equivalent quantities. We exploit the cone  $C_\rho(x_0)$ in Section \ref{sec:regularity} to construct suitable test functions.

\subsection{Regularity of functionals}\label{S:functional-reg}

Inspired by \cite[Formula (13)]{Savare98}, we introduce a notion of regularity of functionals that measures their sensitivity  with respect to a family of perturbations.

\begin{definition}[modulus of $(T,D,\gamma)$-regularity]\label{def:regularity}
Let $V$ be a Banach space, $K \subset V$ and $\gamma >0$. Given a family of maps $T_h \colon K \to K$, with $h$ varying on a given set $D \subset \Rd,$ we say that a functional $\calF:V\to\polR$ is $(T,D,\gamma)$-regular on $K$ if, for all $v \in K$,
\[
\omega(v) = \omega(v; \calF, T, D, \gamma) := \sup_{h \in D }
\frac{\big| \calF (T_h v) - \calF (v) \big|}{|h|^\gamma} < \infty .
\]
\end{definition}

\begin{remark}[subadditivity]
The modulus of $(T,D,\gamma)$-regularity is subadditive with respect to the $\calF$-argument. Indeed, we have that
\begin{equation} \label{eq:subadditivity}
\omega(v; \calF_1 + \calF_2, T, D, \gamma) \le \omega(v; \calF_1, T, D, \gamma)
+ \omega(v; \calF_2, T, D, \gamma).
\end{equation}
Thus, in order to prove the $(T,D,\gamma)$-regularity of $\calF_1 + \calF_2$, it suffices to show the regularity of each of the two functionals independently.
\end{remark}

To prove Besov regularity estimates of solutions to \eqref{eq:Dirichlet}, we will determine separately the regularity of the maps $\calF_1$, $\calF_2$ in \eqref{eq:def-subfunctionals} with respect to the family of local translations \eqref{def:translation-operator}.
The next lemma shows that one can derive Besov regularity estimates for minimizers of \eqref{eq:def-functional} by proving that $\calF$ is regular in the sense of Definition \ref{def:regularity}. This clever idea goes back to Savar\'e \cite{Savare98}.

\begin{lemma}[regularity and minimizers] \label{lem:corollary1}
Let $x_0 \in \Rd$, $\rho > 0$ and $h \in C_\rho(x_0)$. Consider translation operators $T_h \colon \tHs \to \tHs$ as in \eqref{eq:translation-operator}.
If $u$ solves \eqref{eq:Dirichlet} and the functional $\calF$ defined in \eqref{eq:def-functional} is
$(T,C(x_0),\gamma)$-regular on $\tHs$ for some $\gamma >0$, then
\[
|u-T_h u|_{H^s(\Rd)}^2 \le 2 \omega(u) |h|^\gamma.
\]
\end{lemma}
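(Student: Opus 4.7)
The plan is to exploit the quadratic structure of $\calF$ together with the Euler--Lagrange equation satisfied by the minimizer $u$, reducing the claim to a one-line energy identity that is then compared against the $(T,C(x_0),\gamma)$-regularity bound.

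First, I would verify that $T_h u \in \tHs$ so that it is an admissible competitor. By Proposition \ref{prop:cone-property} (uniform cone property), $C_\rho(x_0) \subset \calO_\rho(x_0)$, and the discussion following Definition \ref{D:admissible-vectors} shows that $T_h v$ vanishes outside $\Omega$ whenever $v$ does and $h \in \calO_\rho(x_0)$. Hence $T_h u \in \tHs$, and in particular $w := T_h u - u \in \tHs$ is a legitimate test function.

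Second, I would expand $\calF(T_h u) - \calF(u)$ using the polarization identity for the quadratic form $\calF_2$. Writing $(\cdot,\cdot)_{H^s(\Rd)}$ for the bilinear form associated with $|\cdot|_{H^s(\Rd)}$, one has
\[
\tfrac12|T_h u|_{H^s(\Rd)}^2 - \tfrac12|u|_{H^s(\Rd)}^2 = \tfrac12 (w, T_h u + u)_{H^s(\Rd)}.
\]
Since $u$ is the minimizer of $\calF$ over $\tHs$, the first-order optimality condition gives the Euler--Lagrange identity $(u, w)_{H^s(\Rd)} = \langle f, w\rangle$ for every $w \in \tHs$. Subtracting $\langle f, w\rangle$ from the previous display and inserting this identity yields the clean expression
\[
\calF(T_h u) - \calF(u) = \tfrac12 (w, T_h u + u - 2u)_{H^s(\Rd)} = \tfrac12 |u - T_h u|_{H^s(\Rd)}^2.
\]

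Third, I would combine this identity with the assumed $(T,C(x_0),\gamma)$-regularity. Since the right-hand side is nonnegative, one can drop absolute values and Definition \ref{def:regularity} directly bounds it:
\[
\tfrac12 |u - T_h u|_{H^s(\Rd)}^2 \;=\; \calF(T_h u) - \calF(u) \;\le\; \omega(u)\,|h|^\gamma,
\]
which is the claimed estimate after multiplying by $2$. The only nontrivial input is the admissibility of $T_h u$, which is exactly what the cone construction of Section \ref{S:local-translations} was set up to ensure; once that is in place the argument reduces to the quadratic identity above and requires no estimation.
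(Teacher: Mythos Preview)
Your proof is correct and follows exactly the same idea as the paper: the quadratic structure of $\calF$ together with the vanishing of the first variation at the minimizer $u$ gives the identity $\calF(T_h u)-\calF(u)=\tfrac12|u-T_hu|_{H^s(\Rd)}^2$, and then Definition~\ref{def:regularity} finishes. The paper states this in one line via the Taylor expansion $\calF(v)-\calF(u)=\langle\delta\calF(u),v\rangle+\tfrac12|u-v|_{H^s(\Rd)}^2$, which is just your polarization plus Euler--Lagrange step compressed.
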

\begin{proof}
The proof follows immediately from Definition \ref{def:regularity} and the fact that the minimizer $u$ verifies
\[
\calF(v) - \calF(u) = \langle\delta\calF(u),v\rangle + \frac12 |u - v|_{H^s(\Rd)}^2 = \frac12 |u - v|_{H^s(\Rd)}^2 \quad \forall v \in \tHs,
\]
because $\calF$ is quadratic and the first variation $\delta\calF(u)$ of $\calF$ at $u$ vanishes.
\end{proof}

\section{Regularity of the functionals} \label{sec:regularity}

In this section we study separately the local regularity of the functionals $\calF_1$ and $\calF_2$ in the sense of Definition \ref{def:regularity}. To this end, we choose an arbitrary point $x_0\in\Rd$ and denote
\[
C_\rho = C_\rho(x_0), \quad D_\rho = D_\rho(x_0)
\]
cones and balls centered at $x_0$, respectively.
We exploit the key property that $T_hv\in\tHs$ for any $v\in\tHs$ and $h\in C_\rho$. This enables us to evaluate $\calF(T_hv)$ and compare it with $\calF(v)$.

\subsection{Regularity of $\calF_1$}
We start with the linear functional $\calF_1$ \rhn{defined in \eqref{eq:def-subfunctionals}.} As expected, the smoothness of the right hand side $f$ plays a key role in the regularity of $\calF_1$.

\begin{proposition}[regularity of $\calF_1$] \label{prop:regularity-f}
\red{Let $\sigma \in (0,1)$} and $f \in L^2(\Omega)$. Let $T_h$ be the translation operator given in \eqref{eq:translation-operator}. Then, $\calF_1$ is $(T, C_\rho,\sigma)$-regular in $\dot{B}^{\sigma}_{2,\infty}(\Omega)$ for all $x_0$, $\rho$, namely,
\begin{equation}\label{eq:regularity-f}
  \sup_{h \in C_\rho } \frac{\calF_1 (T_h v) - \calF_1 (v)}{|h|^\sigma} \lesssim
 \red{\frac{1}{\sqrt{1-\sigma}} \| f \|_{L^2(D_{2\rho} \cap \Omega)}  \| v \|_{B^\sigma_{2,\infty}(D_{3\rho})},}
  \quad \forall v \in \dot{B}^{\sigma}_{2,\infty}(\Omega).
\end{equation}
\red{Additionally, if $q \in (1,\infty]$, $\sigma \in [0,1]$ and $f \in B^{-\gamma}_{2,q'}(\Omega)$ for some $\gamma \in (0,1)$, where $q' = \frac{q}{q-1}$ is the H\"older conjugate of $q$, then 
$\calF_1$ is $(T, C_\rho,\sigma)$-regular in $\dot{B}^{\sigma+\gamma}_{2,q}(\Omega)$ for all $x_0$, $\rho$, and
\begin{equation}\label{eq:regularity-f-Besov}
\sup_{h \in C_\rho } \frac{\calF_1 (T_h v) - \calF_1 (v)}{|h|^{\sigma}} \lesssim \| f \|_{B^{-\gamma}_{2,q'}(\Omega)} \red{\| v \|_{B^{\sigma+\gamma}_{2,q}(D_{3\rho})}}, \quad \forall v \in \dot{B}^{\sigma+\gamma}_{2,q}(\Omega).
\end{equation}
}
\end{proposition}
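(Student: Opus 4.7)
The key structural observation is that $\calF_1$ is linear, so
\[
\calF_1(T_h v) - \calF_1(v) = \langle f, T_h v - v\rangle = \langle f, \phi\,\delta_1(h) v\rangle.
\]
The plan is to bound this duality pairing using Lemma~\ref{lemma:shift-Th} on the localized difference $T_h v - v$. A preliminary observation I will rely on is that $T_h v - v$ has support contained in $D_{2\rho}(x_0)$ (because of the cut-off $\phi$) and in $\overline{\Omega}$ (because $h\in C_\rho \subset \calO_\rho(x_0)$ is an admissible outward vector and $v$ vanishes outside $\Omega$). In particular, $T_h v - v \in \dot{B}^\gamma_{2,q}(\Omega)$ whenever $v \in \dot{B}^{\sigma+\gamma}_{2,q}(\Omega)$.

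For the $L^2$-data estimate \eqref{eq:regularity-f}, I would simply apply Cauchy--Schwarz on $D_{2\rho}(x_0)\cap\Omega$,
\[
|\calF_1(T_h v) - \calF_1(v)| \le \|f\|_{L^2(D_{2\rho}\cap\Omega)}\,\|T_h v - v\|_{L^2(D_{2\rho}(x_0))},
\]
and then invoke bound \eqref{eq:translation-L2} of Lemma~\ref{lemma:shift-Th}, which supplies the factor $|h|^\sigma/\sqrt{1-\sigma}$ together with $\|v\|_{B^\sigma_{2,\infty}(D_{3\rho}(x_0))}$. Dividing by $|h|^\sigma$ and taking the supremum over $h\in C_\rho$ yields \eqref{eq:regularity-f}.

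For the Besov-data estimate \eqref{eq:regularity-f-Besov}, the plan is to invoke the Besov duality \eqref{eq:dual-Besov}, $\dot{B}^\gamma_{2,q}(\Omega) = \bigl(B^{-\gamma}_{2,q'}(\Omega)\bigr)'$, which is available precisely because $q>1$ forces $q'<\infty$. This gives
\[
|\calF_1(T_h v) - \calF_1(v)| \le \|f\|_{B^{-\gamma}_{2,q'}(\Omega)}\,\|T_h v - v\|_{\dot{B}^\gamma_{2,q}(\Omega)}.
\]
Since $\phi$ is compactly supported strictly inside $D_{2\rho}(x_0)$, the $\dot{B}^\gamma_{2,q}(\Omega)$-norm of $T_h v - v$ is equivalent, up to a constant, to its $B^\gamma_{2,q}(D_{2\rho}(x_0))$-norm via zero extension. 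Applying \eqref{eq:translation-Besov} of Lemma~\ref{lemma:shift-Th} with regularity index $\gamma$ and shift exponent $\sigma$ yields
\[
\|T_h v - v\|_{B^\gamma_{2,q}(D_{2\rho}(x_0))} \lesssim |h|^\sigma\,\|v\|_{B^{\gamma+\sigma}_{2,q}(D_{3\rho}(x_0))},
\]
and \eqref{eq:regularity-f-Besov} follows by dividing by $|h|^\sigma$ and taking the supremum.

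The only delicate step, I expect, is confirming that $T_h v - v$ genuinely lies in $\dot{B}^\gamma_{2,q}(\Omega)$ and that its norm there is controlled by a local norm on $D_{2\rho}(x_0)$. This rests on two facts: the admissibility of $h\in C_\rho\subset\calO_\rho(x_0)$, which guarantees $T_h v$ still vanishes on $\Omega^c$ and hence $T_h v - v$ is supported in $\overline{\Omega}$; and the compact support of $\phi$ strictly inside $D_{2\rho}(x_0)$, which makes the zero extension across $\partial D_{2\rho}(x_0)$ harmless at the Besov level. Everything else is a one-line combination of a duality inequality with Lemma~\ref{lemma:shift-Th}, and the splitting into two cases is needed only because in the $L^2$-data setting one cannot afford to push the regularity of the test function beyond $\sigma<1$ without paying the Marchaud blow-up $(1-\sigma)^{-1/2}$.
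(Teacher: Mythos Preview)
Your proposal is correct and follows essentially the same route as the paper: linearity reduces $\calF_1(T_hv)-\calF_1(v)$ to a pairing with $\phi\,\delta_1(h)v$, which is then estimated by Cauchy--Schwarz plus \eqref{eq:translation-L2} in the $L^2$ case and by the Besov duality \eqref{eq:dual-Besov} plus \eqref{eq:translation-Besov} in the rough-data case. Your additional care in justifying that $T_hv-v$ lies in $\dot{B}^\gamma_{2,q}(\Omega)$ and has its norm controlled locally is welcome and matches what the paper uses implicitly.
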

\begin{proof}
First, assume $f \in L^2(\Omega)$.
Because $\calF_1$ is linear and $T_h v - v = \phi (v_h - v)$, we have
\[
\calF_1 (T_h v) - \calF_1 (v) = \int_\Omega f \phi (v_h - v) \le
\| f \|_{L^2(D_{2\rho} \cap \Omega )}  \| T_h v - v \|_{L^2(D_{2\rho})}.
\]
We point out that, although $D_{2\rho}$ may have nonempty intersection with $\Omega^c$, $v_h-v$ equals zero on $\Omega^c$ because both $v$ and $v_h$ vanish for any admissible direction $h$.
\red{Applying \eqref{eq:translation-L2}  to $v \in B^\sigma_{2,\infty}(D_{3\rho})$ with $|h| \le \rho$, we obtain
\[
\calF_1 (T_h v) - \calF_1 (v) \lesssim \frac{|h|^\sigma}{\sqrt{1-\sigma}}
\| f \|_{L^2(D_{2\rho} \cap \Omega)}  \| v \|_{B^\sigma_{2,\infty}(D_{3\rho})}.
\]
}
This establishes \eqref{eq:regularity-f}. To prove \eqref{eq:regularity-f-Besov}, we assume \red{$f \in B^{-\gamma}_{2,q'}(\Omega)$ and immediately deduce by the duality property \eqref{eq:dual-Besov}
\[
\calF_1 (T_h v) - \calF_1 (v) \le \| f \|_{B^{-\gamma}_{2,q'}(\Omega)}  \| T_h v - v \|_{\dot{B}^\gamma_{2,q}(\Omega)}.
\] 
Thus, because $T_h v - v $ vanishes in $D_{2\rho}^c$, \eqref{eq:regularity-f-Besov} follows by applying \eqref{eq:translation-Besov}.}
\end{proof}

\begin{remark}[more regular functions] 
In the application of \eqref{eq:regularity-f} or \eqref{eq:regularity-f-Besov} to problem \eqref{eq:Dirichlet}, the natural choice is $\sigma = s$. Even though a priori it is not clear that the solution $u$ to such a problem is any more regular than $H^s$, once one is able to show certain regularity, then one can revisit either of these estimates to deduce a higher-order regularity estimate for the functional $\calF_1$. \red{In principle,} this process can be iterated until one hits the \red{maximum} value $\sigma = 1$ that
translates into a regularity pickup of order $1/2$. This bootstrapping argument is developed in Section \ref{sec:Besov-regularity}.
\end{remark}

\begin{remark}[localization]
  We point out that we deliberately did not localize the estimate \eqref{eq:regularity-f-Besov}. This is
  because localizing \rhn{$\| f \|_{B^{-\gamma}_{2,q'}(\Omega)}$} would require dealing with Lemma \ref{L:localization}
  (localization) for seminorms in the Besov space \rhn{$\dot{B}^\gamma_{2,q}(\Omega)$}, and specifically with
  \eqref{eq:localization2}. Since the equivalence constant in \eqref{eq:localization2} depends on $M$,
  localizing \rhn{$\| f \|_{B^{-\gamma}_{2,q'}(\Omega)}$} would not remove the sensitivity on $M$.
\end{remark}

\subsection{Regularity of $\calF_2$}
We next discuss the regularity of the quadratic functional $\calF_2$ \rhn{defined in \eqref{eq:def-subfunctionals}.} To this end, we introduce an
unusual semi-local fractional seminorm for all $r>0$
\[
|v|_{H^s(D_r,\Rd)} := \bigg(\int_{D_r}\int_{\Rd} \frac{\big( v(x) - v(y) \big)^2}{|x-y|^{d+2s}} dx dy
\bigg)^{\frac12};
\]
note the accumulation property $\sum_j |v|_{H^s(D_r(x_j),\Rd)}^2 \simeq |v|_{H^s(\Rd)}^2$ if
the covering $\{D_r(x_j)\}_j$ has finite overlap.
This seminorm may of course be replaced by the more elegant global fractional norm $|v|_{H^s(\Rd)}$, but
the accumulation property will be used in the proof of Theorem \ref{thm:regularity-solutions-rep} below.

\begin{proposition}[regularity of $\calF_2$] \label{prop:regularity-F0}
Let $\sigma \in [s,1]$ and $q \in [1, \infty]$, with the condition $q \le 2$ if $\sigma = s$. Let the translation operator $T_h$ and cut-off function $\phi$ obey \eqref{eq:translation-operator}. The functional $\calF_2 \colon \tHs \to \tHs$ given by $\calF_2 (v) = \frac12 |v|_{H^s(\Rd)}^2$ is $(T,C_\rho,\sigma)$-regular in $\dot{B}^\sigma_{2,q}(\Omega)$ for all $x_0$, $\rho$, namely,
\begin{equation} \label{eq:higher-regularity-F0}
\sup_{h \in C_\rho} \frac{\calF_2 (T_h v) - \calF_2 (v)}{|h|^\sigma} \lesssim \|\phi\|_{W^1_\infty(\Rd)} |v|_{B^\sigma_{2,q}(D_{4\rho})} |v|_{H^s(D_{4\rho},\Rd)}.
\end{equation}
\end{proposition}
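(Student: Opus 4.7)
My plan is to exploit the quadratic structure of $\calF_2$ via the identity
\[
\calF_2(T_h v) - \calF_2(v) = (T_h v - v,\, v)_{H^s(\Rd)} + \tfrac12\,|T_h v - v|^2_{H^s(\Rd)},
\]
and to treat the bilinear and quadratic pieces separately. The key localization observation is that $T_h v - v = \phi\,\delta_1(h)v$ is supported in $\supp\phi \subset D_{2\rho}$, so both pieces are effectively local, with nonlocal tails captured by the semi-local seminorm $|v|_{H^s(D_{4\rho},\Rd)}$.

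For the quadratic piece $\tfrac12|\phi\,\delta_1(h) v|^2_{H^s(\Rd)}$, I would apply the pointwise product-rule identity $(\phi u)(x)-(\phi u)(y) = \phi(x)(u(x)-u(y)) + u(y)(\phi(x)-\phi(y))$ with $u=\delta_1(h) v$. The commutator contribution, using $|\phi(x)-\phi(y)|\lesssim \|\nabla\phi\|_\infty\min(|x-y|,1)$, produces a term bounded by $\|\nabla\phi\|^2_\infty\|\delta_1(h) v\|^2_{L^2(D_{3\rho})}$, which by Lemma~\ref{lemma:shift-Th} is $\lesssim |h|^{2\sigma}\|\nabla\phi\|^2_\infty|v|^2_{B^\sigma_{2,q}(D_{4\rho})}$. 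The leading contribution $\|\phi\|^2_\infty|\delta_1(h) v|^2_{H^s(D_{3\rho},\Rd)}$ is controlled by $|h|^{2(\sigma-s)}|v|^2_{B^\sigma_{2,q}(D_{4\rho})}$ via a Plancherel/interpolation shift estimate when $\sigma > s$; at the endpoint $\sigma=s$ the hypothesis $q\le 2$ supplies the embedding $B^s_{2,q}\hookrightarrow H^s$ and $|h|\le\rho$ closes the bound.

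For the bilinear piece $(\phi\,\delta_1(h) v, v)_{H^s(\Rd)}$, the same product-rule identity inside the double integral yields a splitting $B_1 + B_2$ with
\[
B_1 = \frac{C(d,s)}{2}\!\int\!\!\int \frac{\phi(x)\,[\delta_1(h) v(x)-\delta_1(h) v(y)]\,(v(x)-v(y))}{|x-y|^{d+2s}}\,dx\,dy,
\]
and $B_2$ the commutator-type term (handled as in the quadratic case). The delicate step is $B_1$: a direct Cauchy-Schwarz gives $\|\phi\|_\infty|\delta_1(h) v|_{H^s(D_{3\rho},\Rd)}|v|_{H^s(D_{3\rho},\Rd)}$, only $O(|h|^{\sigma-s})$—short by one factor $|h|^s$. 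To recover it I would use the transfer identity
\[
\phi\,\delta_1(h) v = \delta_1(h)(\phi v) - v_h\,\delta_1(h)\phi
\]
combined with the translation-invariance $(\delta_1(h) f, v)_{H^s(\Rd)} = (f,\delta_1(-h) v)_{H^s(\Rd)}$, to rewrite
\[
(\phi\,\delta_1(h) v, v)_{H^s(\Rd)} = (\phi v,\,\delta_1(-h) v)_{H^s(\Rd)} - (v_h\,\delta_1(h)\phi,\, v)_{H^s(\Rd)},
\]
trading one shift of $v$ for a shift of the cutoff $\phi$. The second summand acquires an explicit pointwise factor $\|\delta_1(h)\phi\|_\infty\lesssim\|\nabla\phi\|_\infty|h|$; the first, after a further decomposition and the product-rule bound $|\phi v|_{H^s}\lesssim\|\phi\|_{W^1_\infty}\|v\|_{H^s(D_{3\rho})}$, is closed by the $L^2$-shift estimate $\|\delta_1(\pm h)v\|_{L^2(D_{3\rho})}\lesssim|h|^\sigma|v|_{B^\sigma_{2,q}(D_{4\rho})}$.

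The main obstacle I expect is precisely this sharp $|h|^\sigma$ control of $B_1$: naive Cauchy-Schwarz loses a full power $|h|^s$, and recovering it requires the commutator/transfer identity above, exploiting the $W^1_\infty$-regularity of $\phi$ to absorb one translation into $\nabla\phi$. Once this is in place, the remaining ingredients—product rule for fractional seminorms, localization into the concentric balls $D_{2\rho}\subset D_{3\rho}\subset D_{4\rho}$, and the $L^2$-shift estimates of Lemma~\ref{lemma:shift-Th}—combine to deliver the claimed product bound with the scaling $|h|^\sigma\,\|\phi\|_{W^1_\infty(\Rd)}\,|v|_{B^\sigma_{2,q}(D_{4\rho})}\,|v|_{H^s(D_{4\rho},\Rd)}$.
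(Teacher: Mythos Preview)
Your plan has a genuine gap in the treatment of the bilinear piece $B_1$. After the transfer identity you obtain $(\phi v,\,\delta_1(-h) v)_{H^s(\Rd)}$, and you propose to close it with the product-rule bound $|\phi v|_{H^s}\lesssim\|\phi\|_{W^1_\infty}\|v\|_{H^s(D_{3\rho})}$ together with the $L^2$-shift estimate $\|\delta_1(-h)v\|_{L^2}\lesssim|h|^\sigma|v|_{B^\sigma_{2,q}}$. But $(\cdot,\cdot)_{H^s(\Rd)}$ is not an $L^2$ pairing: Cauchy--Schwarz forces you to take $|\delta_1(-h)v|_{H^s}$, which only yields $|h|^{\sigma-s}$, exactly the loss you set out to repair. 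The transfer identity merely moves $\delta_1(h)$ from one slot to the other; it does not convert an $H^s$ pairing into an $L^2$ one, so you are back where you started. (Passing all of $(-\Delta)^s$ onto $\phi v$ to produce an honest $L^2$ pairing would require $\phi v\in H^{2s}$, which you do not have.) The same issue hits the quadratic piece: your bound $|\phi\,\delta_1(h)v|^2_{H^s}\lesssim|h|^{2(\sigma-s)}|v|^2_{B^\sigma_{2,q}}$ gives no decay at all when $\sigma=s$ and is strictly weaker than $|h|^\sigma$ whenever $\sigma<2s$, which covers the crucial first step $\sigma=s$ of the bootstrap.

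The paper avoids both difficulties by \emph{not} expanding $\calF_2(T_hv)-\calF_2(v)$ into bilinear and quadratic parts. Instead it works directly with the difference of squares $(T_hv(x)-T_hv(y))^2-(v(x)-v(y))^2$ and exploits that $T_hv$ is a \emph{convex combination} of $v$ and $v_h$: writing $z(x,y)=\phi(x)(v_h(x)-v_h(y))+(1-\phi(x))(v(x)-v(y))$, convexity of $t\mapsto t^2$ gives $z^2-(v(x)-v(y))^2\le\phi(x)\big[(v_h(x)-v_h(y))^2-(v(x)-v(y))^2\big]$. After the change of variables $(x,y)\mapsto(x-h,y-h)$ the bracket vanishes and one is left with a kernel $K(x,y)$ involving differences of translated cutoffs, hence $|K|\lesssim|h|\,\|\phi\|_{W^1_\infty}$ pointwise. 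This delivers a full factor $|h|$ multiplied by $|v|^2_{H^s(D_{4\rho})}$, with no reference to $|h|^{\sigma-s}$-type shift estimates. The remaining cross terms (from $T_hv-z$) are of commutator type and are handled as you describe. The missing idea, then, is the convexity/change-of-variables cancellation, which is what transfers the translation onto $\phi$ \emph{inside the square} rather than at the level of an $H^s$ inner product.
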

\begin{proof}
We first observe that $\dot{B}^\sigma_{2,q}(\Omega) \subset \tHs$ under the 
  the hypotheses on $\sigma$ and $q$: for all $q \in [1,\infty]$ if $\sigma > s$, whereas
  for all $q\le 2$ for $\sigma=s$. We proceed in several steps.

\medskip
{\it Step 1: Auxiliary function $z$.}
In light of the definition \eqref{eq:translation-operator} of $T_h$
\[
T_h v(x) = \phi(x) v_h(x) + (1-\phi(x)) v(x),
\]
we rewrite the difference $T_hv(x) - T_hv(y)$ as follows
\begin{equation}\label{z1-z2}
  T_hv(x) - T_hv(y) = z(x,y) + \big( \phi(x) - \phi(y) \big) \big( v_h(y) - v(y) \big)
\end{equation}
in terms of the auxiliary function
\begin{align*}
  z(x,y) := \phi(x) \big(v_h(x) - v_h(y) \big) + (1-\phi(x))  \big( v(x) - v(y) \big).
\end{align*}

\medskip
{\it Step 2: Decomposition of $\calF_2(T_h v) - \calF_2(v)$.}
Since $\supp \phi \subset D_{2\rho}$, we deduce
\begin{equation}\label{Thv=v}
T_h v(x) = v(x) \quad\forall \, x\notin D_{2\rho}.
\end{equation}
Let $0\le \psi \le 1$ be a cut-off function such that
\[
\psi = 1 \quad\text{in } D_{2\rho},
\qquad
\psi = 0 \quad\text{in } D_{3\rho}^c.
\]
We decompose $\calF_2(T_h v) - \calF_2(v) = \sum_{i=1}^4 F_i$ as follows:
\begin{align*}
  F_1 & := \int_{\Rd}\int_{\Rd} \psi(x)\psi(y)
  \frac{\big(T_hv(x) - T_hv(y) \big)^2-\big( v(x) - v(y) \big)^2}{|x-y|^{d+2s}} dy dx,
  \\
  F_2 & := \int_{\Rd}\int_{\Rd} \big(1-\psi(x)\big) \psi(y)
  \frac{\big(T_hv(x) - T_hv(y) \big)^2-\big( v(x) - v(y) \big)^2}{|x-y|^{d+2s}} dy dx,
  \\
  F_3 & := \int_{\Rd}\int_{\Rd} \psi(x) \big( 1 - \psi(y)\big)
  \frac{\big(T_hv(x) - T_hv(y) \big)^2-\big( v(x) - v(y) \big)^2}{|x-y|^{d+2s}} dy dx,
  \\
  F_4 & := \int_{\Rd}\int_{\Rd} \big(1 - \psi(x) \big) \big( 1 - \psi(y) \big)
  \frac{\big(T_hv(x) - T_hv(y) \big)^2-\big( v(x) - v(y) \big)^2}{|x-y|^{d+2s}} dy dx,
\end{align*}
We observe that \eqref{Thv=v} implies
\[
T_h v(x) - T_hv(y) = v(x) - v(y) \quad\forall \,  x,y \notin D_{2\rho},
\]
whence $F_4 = 0$. We also realize that $F_3=F_2$ upon exchanging the variables $x$ and $y$ in $F_3$.
We next estimate the remaining two terms $F_2$ and $F_1$.

\medskip
{\it Step 3: Estimate of $F_2$.} We resort to \eqref{z1-z2} to split $F_2 = F_{21} + F_{22}$ with
\begin{align*}
F_{21} &= \int_{\Rd} \int_{\Rd} \big(1-\psi(x)\big) \psi(y)
\frac{\big(T_hv(x) - T_hv(y) \big)^2- z(x,y)^2}{|x-y|^{d+2s}} dy dx,
\\
F_{22} &= \int_{\Rd} \int_{\Rd} \big(1-\psi(x)\big) \psi(y)
  \frac{z(x,y)^2 -\big( v(x) - v(y) \big)^2}{|x-y|^{d+2s}} dy dx,
\end{align*}
and observe that
\begin{align*}
  \big( T_hv(x) - T_hv(y) \big)^2 - z(x,y)^2 &=
  \big(\phi(x) - \phi(y)  \big)^2 \big( v_h(y) - v(y) \big)^2
  \\
  & + 2 \big(\phi(x) - \phi(y)  \big) \big( v_h(y) - v(y) \big) z(x,y).
\end{align*}
This yields $F_{21} = F_{211} + F_{212}$ with
\begin{align*}
F_{211} &= \int_{\Rd} \int_{\Rd} \big(1-\psi(x)\big) \psi(y)
\frac{\big(\phi(x) - \phi(y)  \big)^2 \big( v_h(y) - v(y) \big)^2}{|x-y|^{d+2s}} dy dx,
\\
F_{212} &= 2 \int_{\Rd} \int_{\Rd} \big(1-\psi(x)\big) \psi(y)
\frac{\big(\phi(x) - \phi(y)  \big) \big( v_h(y) - v(y) \big) z(x,y)}{|x.-y|^{d+2s}} dy dx.
\end{align*}
To estimate $F_{211}$ we reorder the integral
\[
F_{211} = \int_{\Rd}  \psi(y) \big( v_h(y) - v(y) \big)^2
\bigg(\int_{\Rd} \big(1-\psi(x)\big)\frac{\big(\phi(x) - \phi(y)  \big)^2}{|x-y|^{d+2s}} dx\bigg) dy
\]
and note that the inner integral is bounded by
\begin{align*}
  \int_{\Rd} \frac{\big(\phi(x) - \phi(y)  \big)^2}{|x-y|^{d+2s}} dx &=
  \int_{D_1(y)} \frac{\big(\phi(x) - \phi(y)  \big)^2}{|x-y|^{d+2s}} dx +
  \int_{D_1(y)^c} \frac{\big(\phi(x) - \phi(y)  \big)^2}{|x-y|^{d+2s}} dx
  \\
  & \le |\phi|_{W^1_\infty(\Rd)}^2 \int_{D_1(y)} \frac{dx}{|x-y|^{d+2s-2}} +
  \int_{D_1(y)^c} \frac{dx}{|x-y|^{d+2s}}
  \lesssim \|\phi\|_{W^1_\infty(\Rd)}^2
\end{align*}
for all $y\in D_{3\rho}$. Therefore
\[
F_{211} \lesssim \|\phi\|_{W^1_\infty(\Rd)}^2 \int_{D_{3\rho}} \big( v_h(y) - v(y) \big)^2
\lesssim |h|^{2\sigma} \|\phi\|_{W^1_\infty(\Rd)}^2 |v|_{B^\sigma_{2,q} (D_{4\rho})}^2.
\]
For the other term, we employ the Cauchy-Schwarz inequality to get
\[
F_{212} \le 2 \sqrt{F_{211}} \bigg(\int_{\Rd} \int_{\Rd} (1-\psi(x)\big) \psi(y)
\frac{z(x,y)^2}{|x-y|^{d+2s}} dy dx \bigg)^{\frac12}.
\]
Using the convexity of $t\mapsto t^2$ we infer that
\begin{equation}\label{convexity}
  z(x,y)^2 \le \phi(x) \big( v_h(x) - v_h(y) \big)^2
  + \big( 1 - \phi(x)\big) \big( v(x) - v(y) \big)^2
\end{equation}
and
\begin{align*}
  \int_{\Rd} \int_{\Rd} \big( 1 - \psi(x) \big) \psi(y) \frac{z(x,y)^2}{|x-y|^{d+2s}} dy dx &\le
  \int_{\Rd} \int_{\Rd} \big( 1 - \psi(x) \big) \phi(x) \psi(y) 
  \frac{ \big( v_h(x) - v_h(y) \big)^2}{|x-y|^{d+2s}} dy dx
  \\
  & + \int_{\Rd} \int_{\Rd} \big( 1 - \psi(x) \big) \big( 1 - \phi(x) \big) \psi(y) 
  \frac{ \big( v(x) - v(y) \big)^2 }{|x-y|^{d+2s}} dy dx.
\end{align*}
We see that this the first term vanishes because $\big( 1 - \psi(x) \big) \phi(x)=0$ for all
$x\in\Rd$. For the second term we use that 
$\big(1 - \psi(x) \big) \big( 1 - \phi(x) \big) \le 1$ for all $x \in\Rd$ and
$\supp \psi \subset D_{3\rho}$ to obtain
\begin{equation*} \begin{split}
  \int_{\Rd} \int_{\Rd} \big( 1 - \psi(x) \big) \big( 1 - \phi(x) \big) \psi(y) \ &
  \frac{ \big( v(x) - v(y) \big)^2 }{|x-y|^{d+2s}} dy dx \\
&  \le
  \int_{D_{3\rho}}\int_{\Rd} \frac{\big( v(x) - v(y) \big)^2}{|x-y|^{d+2s}} dx dy  = |v|_{H^s(D_{3\rho},\Rd)}^2.
\end{split}\end{equation*}
Collecting the previous estimates, we arrive at
\begin{equation}\label{F21}
F_{21} \lesssim |h|^\sigma \|\phi\|_{W^1_\infty(\Rd)} |v|_{B^\sigma_{2,q}(D_{4\rho})} |v|_{H^s(D_{3\rho},\Rd)}.
\end{equation}

We now turn to term $F_{22}$. In light of \eqref{convexity}, we see that
\[
z(x,y)^2 - \big( v(x) - v(y) \big)^2
\le \phi(x)  \Big( \big( v_h(x) - v_h(y) \big)^2 - \big( v(x) - v(y) \big)^2 \Big),
\]
whence
\[
F_{22} \le \int_{\Rd} \int_{\Rd} \big( 1 - \psi(x) \big) \phi(x) \psi(y)
\frac{\big( v_h(x) - v_h(y) \big)^2 - \big( v(x) - v(y) \big)^2}{|x-y|^{d+2s}} dy dx.
\]
We point out that the localization occurs because of the presence of $\phi(x)$ and $\psi(y)$
in the integrand. Upon changing variables to convert $v_h$ into $v$ we obtain
the equivalent expression of $F_{22}$
\[
F_{22} = \int_{\Rd} \int_{\Rd} \underbrace{\Big(\big(1 - \psi_{-h}(x) \big) \phi_{-h}(x) \psi_{-h}(y)
- \big(1 - \psi(x) \big) \phi(x) \psi(y) \Big)}_{=K(x,y)} \frac{\big( v(x) - v(y) \big)^2}{|x-y|^{d+2s}} dy dx.
\]
We now observe that the kernel $K(x,y)$ is local and
\begin{align*}
  \big| K(x,y) \big| & \le
  \big| \psi_{-h}(x) - \psi(x) \big| \phi_{-h}(x) \psi_{-h} (x)
  \\
  & + \big( 1-\psi(x) \big) \big| \phi_{-h}(x) - \phi(x) \big| \psi_{-h}(y)
  \\
  & + \big( 1-\psi(x) \big) \big| \phi(x) \big| \psi_{-h}(y) - \psi(y) \big|
  \\
  & \lesssim |h|\big( \|\psi\|_{W^1_\infty(\Rd)} +  \|\phi\|_{W^1_\infty(\Rd)} \big)
  \chi_{D_{4\rho}} (x) \chi_{D_{4\rho}}(y).
\end{align*}
Consequently,
\begin{equation}\label{F22}
F_{22} \lesssim |h| \big( \|\psi\|_{W^1_\infty(\Rd)} +  \|\phi\|_{W^1_\infty(\Rd)} \big) |v|_{H^s(D_{4\rho})}^2.
\end{equation}
Since $|v|_{H^s(D_{4\rho})} \le |v|_{B^\sigma_{2,q} (D_{4\rho})}$ under the assumptions on $\sigma$ and $q$,
\eqref{F21} and \eqref{F22} together yield
\begin{align}\label{F2}
  F_2 \lesssim \Big( |h|^\sigma |v|_{B^\sigma_{2,q} (D_{4\rho})} + |h| |v|_{H^s (D_{4\rho})}  \Big)
  |v|_{H^s(D_{4\rho},\Rd)}
  \lesssim |h|^\sigma |v|_{B^\sigma_{2,q} (D_{4\rho})} |v|_{H^s(D_{4\rho,}\Rd)},
\end{align}
where we have not written the dependence on $\|\phi\|_{W^1_\infty(\Rd)}$ and $\|\psi\|_{W^1_\infty(\Rd)}$
to simplify the notation. 

\medskip
{\it Step 4: Estimate of $F_1$.}
This term is already localized because of the factor $\psi(x) \psi(y)$. In any event, a similar
argument to Step 3 yields $F_1 = F_{11} + F_{12}$ with
\begin{align*}
  F_{11} &= \int_{\Rd} \int_{\Rd} \psi(x)\psi(y)
  \frac{\big( T_hv(x) - T_hv(y)\big)^2 - z_1(x,y)^2}{|x-y|^{d+2s}} dy dx
  \\
  F_{12} &= \int_{\Rd} \int_{\Rd} \psi(x)\psi(y)
  \frac{z_1(x,y)^2 - \big( v(x) - v(y)\big)^2}{|x-y|^{d+2s}} dy dx.
\end{align*}
Repeating Step 3 with $1-\psi(x)$ replaced by $\psi(x)$ we arrive at the following local estimate:
\begin{equation}\label{F1}
F_1 \lesssim |h|^\sigma |v|_{B^\sigma_{2,q}(D_{4\rho})} |v|_{H^s(D_{4\rho})}.
\end{equation}

\medskip
{\it Step 5: Final estimate.}
Combining \eqref{F1} with \eqref{F2} and recalling that $F_4=0$ and $F_2=F_3$,
yields the final estimate
\[
\big| \calF_2(T_hv) - \calF_2(v)  \big| \lesssim
|h|^\sigma |v|_{B^\sigma_{2,q}(D_{4\rho})} |v|_{H^s(D_{4\rho},\Rd)},
\]
which is the desired localized estimate \eqref{eq:higher-regularity-F0}.
\end{proof}

\begin{remark}[maximal regularity gain]
Heuristically, proving $\sigma$-regularity of the functionals translates into a $\sigma/2$ regularity pickup in the solutions of our problem. We emphasize that the proof of Proposition \ref{prop:regularity-F0} does not yield $\sigma$-regularity of the functional $\calF_2$ for any $\sigma > 1$.  This indicates that we should not expect solutions to \eqref{eq:Dirichlet} to pick up more than $1/2$ derivative, independently of the smoothness of $f$. This is consistent with \eqref{eq:example}, where the problem is posed in a ball and $f$ is constant, but \rhn{$u \notin \widetilde{H}^{s+1/2}(\Omega)$.} Evidently, if $f$ possesses limited regularity then the functional $\calF_1$ may become the bottleneck in the regularity of $\calF$ (cf. \eqref{eq:regularity-f-Besov}).
\end{remark}

\section{Besov regularity for $L^2$ data: proof of Theorem \ref{thm:regularity-solutions}} \label{sec:Besov-regularity}

We now establish Besov regularity of solutions to \eqref{eq:Dirichlet} in case $f \in L^2(\Omega)$ and
$\Omega$ is Lipschitz, namely, we give a proof of Theorem \ref{thm:regularity-solutions}. 
\rhn{We first point out that the desired estimate \eqref{eq:regularity-u-low} for $s\in (1/2,1)$ follows from Theorem \ref{prop:regularity-solutions-rough} (Besov regularity for rough data) upon using estimate \eqref{eq:embedding-negative} with $r = s - 1/2 \in (0,1/2)$. Therefore, we now focus on the following reduced form of Theorem \ref{thm:regularity-solutions}.

\begin{theorem}[Besov regularity with $L^2$-data and {$s\in (0,1/2]$}] \label{thm:regularity-solutions-rep}
Let $\Omega$ be a bounded Lipschitz domain and $f \in L^2(\Omega)$. 
If $s \in (0,1/2)$, then the solution $u$ to \eqref{eq:Dirichlet} satisfies $u \in \dot{B}^{2s}_{2,\infty}(\Omega)$ and
\begin{equation}\label{eq:regularity-u-low-again}
\| u\|_{\dot{B}^{2s}_{2,\infty}(\Omega)} \le \frac{C(\Omega,d)}{\sqrt{1-2s}} \| f \|_{L^2(\Omega)} .
\end{equation}
On the other hand, the solution for $s = 1/2$ satisfies $u \in \dot{B}^{1-\epsilon}_{2,\infty}(\Omega)$ \jp{for every $0<\epsilon < 1$ and}
\begin{equation} \label{eq:regularity-u-s1/2-again}
\| u \|_{\dot{B}^{1-\epsilon}_{2,\infty}(\Omega)} \le \frac{C(\Omega,d)}{\sqrt{\epsilon}} \| f \|_{L^2(\Omega)}.
\end{equation}
\end{theorem}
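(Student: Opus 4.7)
The plan is to combine Lemma \ref{lem:corollary1} with Propositions \ref{prop:regularity-f} and \ref{prop:regularity-F0} and then bootstrap via the reiteration formula of Proposition \ref{prop:bound-Besov}. Cover $\Omega$ by finitely many balls $\{D_\rho(x_j)\}$ and, at each center $x_j$, use the admissible cone $C_\rho(x_j)$ from Proposition \ref{prop:cone-property} together with the localized translation $T_h$ of Definition \ref{def:translation-operator}.

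The central one-step improvement I would prove is: if $u \in \dot{B}^\sigma_{2,\infty}(\Omega)$ for some $\sigma \in [s,1)$, then
\[
\|u\|_{\dot{B}^{s+\sigma/2}_{2,\infty}(\Omega)}^2 \lesssim \frac{1}{\sqrt{1-\sigma}}\, \|f\|_{L^2(\Omega)}\, \|u\|_{\dot{B}^\sigma_{2,\infty}(\Omega)}.
\]
Indeed, Lemma \ref{lem:corollary1} gives $|u-T_h u|_{H^s(\Rd)}^2 \le 2\omega(u)|h|^\sigma$ for $h \in C_\rho(x_j)$; the subadditivity \eqref{eq:subadditivity} together with Propositions \ref{prop:regularity-f} and \ref{prop:regularity-F0} and the well-posedness bound \eqref{eq:well-posedness} control $\omega(u)$ by $(1-\sigma)^{-1/2}\,\|u\|_{\dot{B}^\sigma_{2,\infty}(\Omega)}\,\|f\|_{L^2(\Omega)}$. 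Because $T_h u - u = u_h - u$ on $D_\rho(x_j)$ (where $\phi \equiv 1$), Proposition \ref{prop:bound-Besov} applied with the cone $C_\rho(x_j)$ in place of a ball (legitimate by the equivalence established in Section \ref{S:diffs-cones}) gives the local improvement, and the localization Lemma \ref{L:localization} glues the pieces into the displayed global estimate.

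Starting from $\sigma_0 = s$ (since $u \in \widetilde H^s(\Omega) \hookrightarrow \dot{B}^s_{2,\infty}(\Omega)$) I iterate $\sigma_{n+1} = s + \sigma_n/2$, so $\sigma_n = 2s - s\,2^{-n} \uparrow 2s$. For $s \in (0,1/2)$ one has $1-\sigma_n \ge 1-2s$ uniformly, so setting $K = C(1-2s)^{-1/2}$, $F = \|f\|_{L^2(\Omega)}$ and $P_n := \|u\|_{\dot{B}^{\sigma_n}_{2,\infty}(\Omega)}/(KF)$ transforms the recursion into $P_{n+1}^2 \le P_n$. This forces $P_n \le \max(P_0,1)$, and since \eqref{eq:well-posedness} yields $P_0 \le 1$, we get a uniform bound $\|u\|_{\dot{B}^{\sigma_n}_{2,\infty}(\Omega)} \lesssim (1-2s)^{-1/2}\,F$. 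I then pass to $\sigma = 2s$ by a Fatou-type argument: for each admissible $h$, $\|\delta_2(h)u\|_{L^2}/|h|^{2s} \le CF\,|h|^{\sigma_n-2s} \to CF$ as $n \to \infty$, and taking the supremum over $h$ delivers \eqref{eq:regularity-u-low-again}.

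For $s = 1/2$ the per-step factor $(1-\sigma_n)^{-1/2} = 2^{(n+1)/2}$ grows geometrically, and the recursion becomes $p_{n+1}^2 \le 2^{(n+1)/2} p_n$ for $p_n := \|u\|_{\dot{B}^{\sigma_n}_{2,\infty}(\Omega)}/F$. A careful induction, with ansatz $p_n \lesssim 2^{n/2}$ and exponent chosen to be the unique one that closes the recursion, gives $\|u\|_{\dot{B}^{\sigma_n}_{2,\infty}(\Omega)} \lesssim 2^{n/2}\,F$. Given $\epsilon \in (0,1)$, choose the smallest $n$ with $2^{-n-1}\le \epsilon$, so that $\sigma_n \in [1-\epsilon,1-\epsilon/2]$ and $2^{n/2} \lesssim \epsilon^{-1/2}$; combined with the embedding $\dot{B}^{\sigma_n}_{2,\infty}(\Omega) \hookrightarrow \dot{B}^{1-\epsilon}_{2,\infty}(\Omega)$, this yields \eqref{eq:regularity-u-s1/2-again}. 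The main obstacle is the sharp tracking of constants in this $s=1/2$ bootstrap: since each step amplifies by a growing factor, any slack in the induction propagates geometrically and would spoil the optimal $\epsilon^{-1/2}$ power.
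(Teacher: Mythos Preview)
Your proof is correct and matches the paper's approach almost exactly: the same covering/cone/localized-translation setup, the same one-step bound $\|u\|_{\dot B^{s+\sigma/2}_{2,\infty}}^2 \lesssim (1-\sigma)^{-1/2}\|u\|_{\dot B^\sigma_{2,q}}\|f\|_{L^2}$, the same bootstrap $\sigma_{n+1}=s+\sigma_n/2$, and the same monotone-convergence passage to the limit (your ``Fatou-type'' step). The only small wrinkle is that at the initial step $\sigma=s$ Proposition~\ref{prop:regularity-F0} requires $q\le2$, so you should invoke $u\in\widetilde H^s(\Omega)=\dot B^s_{2,2}(\Omega)$ directly rather than the embedding into $\dot B^s_{2,\infty}(\Omega)$; the paper makes this $q=2$ versus $q=\infty$ distinction explicit.
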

}

\begin{proof}
Since $f \in L^2(\Omega) \subset H^{-s}(\Omega)$, problem \eqref{eq:Dirichlet} is well-posed; according to \eqref{eq:well-posedness}, it holds that
\begin{equation} \label{eq:stability-Hs}
|u|_{H^s(\Rd)} \le \|f\|_{H^{-s}(\Omega)} \le C \|f\|_{L^{2}(\Omega)},
\end{equation}
where $C$ depends on $\Omega$.
Because $\Omega$ is a Lipschitz domain, by Proposition \ref{prop:cone-property} (uniform cone property) there exist $\rho$, $\theta$ such that the cone $C_\rho(x)=\calC_\rho (\vn(x), \theta)$ of height $\rho$, opening $\theta$ and axis $\vn(x)$ is made of admissible directions $h$ for all $x \in \Rd$, namely $C_\rho(x) \subset \calO_\rho(x)$; see Definition \ref{D:admissible-vectors}.

We consider a finite covering of the domain $\Omega^\rho=\{x\in\Rd: \mbox{dist}(x,\Omega)<\rho\}$ by balls $D_\rho(x_j) = D(x_j, \rho)$ of radius $\rho$ and center $x_j\in\Omega^\rho$, and fix the cone $C_j = C_\rho (x_j)$ of admissible directions, for each $j = 1, \ldots , M$. By the localization estimate \eqref{eq:localization2}, it suffices to bound Besov seminorms over each of the balls $D_j$ using  $C_j$. We split the argument into several steps. 

\medskip
{\it Step 1: Regularity improvement.}
We consider one of the balls $D_\rho(x_j)$ in the covering and corresponding cone $C_j = \calC_\rho (\vn(x_j), \theta)$. Importantly, if $h \in C_j$ we can guarantee that $T_h u \in \tHs$ and because $T_h u = u_h$ in $D_\rho(x_j)$, we combine Proposition \ref{prop:bound-Besov} (bounds on Besov seminorms) and Lemma \ref{lem:corollary1} (regularity and minimizers) to deduce that, for $\sigma \in [s,1]$,
\[
| u |^2_{B^{s+\sigma/2}_{2,\infty}(D_\rho(x_j))} \lesssim \sup_{h \in C_j} 
\frac{| u - T_hu|^2_{H^s(D_\rho(x_j))}}{|h|^{\sigma}} \lesssim \omega(u; \calF, T, C_j, \sigma).
\]
  
To bound the $(T,C_j,\sigma)$-regularity modulus $\omega$ of $\calF=\calF_2-\calF_1$, we \red{now assume $\sigma \in [s,1)$,} exploit the subadditivity \eqref{eq:subadditivity} of $\omega$, and \red{combine either \eqref{eq:regularity-f} if $\sigma > s$ or \eqref{eq:regularity-f-Besov} (with $q=2$ and $\gamma = 0$) if $\sigma = s$ with \eqref{eq:higher-regularity-F0} to obtain:}
\begin{equation}\label{eq:local-reg}
\begin{aligned}
\omega(u; \calF, T, C_j, \sigma) & \le \omega(u; \calF_1, T, C_j, \sigma) + \omega(u; \calF_2, T, C_j, \sigma) \\
& \lesssim  \|u\|_{B^\sigma_{2,q}(D_{4\rho}(x_j))} \left( \red{\frac{\| f \|_{L^2(D_{2\rho}(x_j) \cap \Omega)}}{\sqrt{1-\sigma}} } +  |u|_{H^s(D_{4\rho}(x_j),\Rd)} \right).
\end{aligned}
\end{equation}
\red{\rhn{Note that $q$ in \eqref{eq:local-reg} is} $q=2$ if $\sigma = s$ and $q=\infty$ if $\sigma > s$.}
Using the localization estimate \eqref{eq:localization2}, and applying Cauchy-Schwarz in conjunction
with stability estimate \eqref{eq:stability-Hs}, we end up with
\begin{equation} \label{eq:bound3s/2}
\begin {split}
\| u \|^2_{\dot{B}^{s+\sigma/2}_{2,\infty}(\Omega)} & \lesssim  \sum_{j=1}^M \| u \|^2_{B^{s+\sigma/2}_{2,\infty}(D_\rho(x_j))}
\\ &\lesssim \|u\|_{\dot{B}^\sigma_{2,q}(\Omega)} \left(\red{\frac{\| f \|_{L^2(\Omega)}}{\sqrt{1-\sigma}} }  + |u|_{H^s(\Rd)} \right) \lesssim 
\red{\frac{1}{\sqrt{1-\sigma}} }  \|u\|_{\dot{B}^\sigma_{2,q}(\Omega)} \| f \|_{L^2(\Omega)},
\end{split}\end{equation}
\rhn{with a hidden constant $\Lambda=\Lambda(M)$ depending on the cardinality $M$ of the covering. Therefore, increasing $\Lambda$ further to make it as large as the constant $C$ in \eqref{eq:stability-Hs}, \eqref{eq:bound3s/2} reads equivalently}
\begin{equation}\label{eq:Lambda}
\red{ \| u \|^2_{\dot{B}^{s+\sigma/2}_{2,\infty}(\Omega)} \le \frac{\Lambda}{\sqrt{1-\sigma}}  \|u\|_{\dot{B}^\sigma_{2,q}(\Omega)} \| f \|_{L^2(\Omega)}.}
\end{equation}
Making the stability constant $\Lambda$ explicit is crucial for the bootstrap argument below.

To utilize this estimate, we must know that $u \in \dot{B}^\sigma_{2,q}(\Omega)$ for some \red{$\sigma \in [s,1)$,} $q \in [1,\infty]$ (with $q \le 2$ if $\sigma = s$). At the beginning of our argument, we know this to hold only for $\sigma = s$, $q = 2$ with the \rhn{stability bound \eqref{eq:stability-Hs}.} This implies the first improved estimate with $C$ depending on $\Omega$ \red{and $s$}
\[
\| u \|^2_{\dot{B}^{3s/2}_{2,\infty}(\Omega)}  \le \Lambda  \| f \|_{H^{-s}(\Omega)} \| f \|_{L^2(\Omega)}
\le C \Lambda \| f \|_{L^2(\Omega)}^2,
\]
whence $u \in \dot{B}^{3s/2}_{2,\infty}(\Omega)$. We shall iterate \eqref{eq:Lambda} to improve further the regularity of $u$.

\medskip
\red{{\it Step 2: Regularity for $s \in (0, 1/2)$. } 
We let $s \in (0,1/2)$ and consider the sequence
\[
\rhn{\sigma_j = 2s \left( 1 - \frac1{2^j} \right), \quad j \ge 0,} 
\]
which is monotone increasing and satisfies $\sigma_j \to 2s^-$ and the recursion relation
\[
s + \frac{\sigma_j}{2} = \sigma_{j+1} \quad \rhn{\forall j \ge 0.}
\]

To prove that $u \in \dot{B}^{2s}_{2,\infty}(\Omega)$, we claim that
\begin{equation}\label{eq:regularity-induction}
  \rhn{u \in \dot{B}^{\sigma_j}_{2,q}(\Omega),} \quad
  \|u\|_{\dot{B}^{\sigma_j}_{2,q}(\Omega)} \le \rhn{\frac{\Lambda}{\sqrt{1-\sigma_{j-1}}}} \| f \|_{L^2(\Omega)}  \quad \forall j \ge 1 , 
\end{equation}
\rhn{where $q = 2$ if $j = 1$ and $q = \infty$ otherwise, and $\Lambda$ is the constant in \eqref{eq:Lambda}.} We argue by induction. The claim is true for $j = 1$ in view of \eqref{eq:stability-Hs}, because $\sigma_0 = s$, \rhn{ $q=2$ and $\Lambda\ge C$ the constant in \eqref{eq:stability-Hs}.}
Let $j \ge 1$ and assume \eqref{eq:regularity-induction} holds. We apply \eqref{eq:Lambda} and exploit the fact that \rhn{$1-\sigma_j < 1 - \sigma_{j-1}$} to get
\[
\|u\|_{\dot{B}^{\sigma_{j+1}}_{2,\infty}(\Omega)} \le \rhn{\frac{\Lambda^{1/2}}{(1-\sigma_j)^{1/4}} \|u\|_{\dot{B}^{\sigma_j }_{2,q}(\Omega)}^{1/2}} \| f \|_{L^2(\Omega)}^{1/2} \le \rhn{\frac{\Lambda}{\sqrt{1-\sigma_j}}} \| f \|_{L^2(\Omega)} ,
\]
which shows the validity of \eqref{eq:regularity-induction} for $j+1$.
This implies that $u \in \dot{B}^{2s-\epsilon}_{2,\infty}(\Omega)$ for all $\epsilon > 0$ and
\[
\|u\|_{\dot{B}^{2s-\epsilon}_{2,\infty}(\Omega)} \le \rhn{\frac{\Lambda}{\sqrt{1-2s}}} \| f \|_{L^2(\Omega)}.
\]
It remains to prove \eqref{eq:regularity-u-low-again}.
To this end, we resort to Lemma \ref{lemma:Besov} \rhn{(equivalence of Besov seminorms)} which
expresses the Besov seminorms in terms of \rhn{second-order} difference quotients without weights:
\[
\sup_{|h|\le\rho} \int_{\Rd} \frac{\rhn{\left| \delta_2(h) u(x) \right|^2}}{|h|^{4s - 2\epsilon}} dx  \simeq |u|_{\dot{B}^{2s-\epsilon}_{2,\infty}(\Omega)}^2 \le \rhn{\frac{\Lambda^2}{1-2s}} \| f \|_{L^2(\Omega)}^2.
\]
Consider the pointwise non-decreasing sequence of functions
\[
w_\epsilon(x) := \frac{\rhn{\left| \delta_2(h) u(x) \right|^2}}{|h|^{4s - 2\epsilon}}
\quad\uparrow\quad w_0(x) := \frac{\rhn{\left| \delta_2(h) u(x) \right|^2}}{|h|^{4s}}
\quad\textrm{as } \epsilon\to0
\]
for every $|h|\le\rho\le1$, whence the Monotone Convergence Theorem yields
\[
\int_{\Rd} w_0(x) dx  = \lim_{\epsilon\to 0} \int_{\Rd} w_\epsilon(x) dx \le
\rhn{\frac{\Lambda^2}{1-2s}} \| f \|_{L^2(\Omega)}^2 .
\]
This is the desired estimate \eqref{eq:regularity-u-low-again} in disguise, namely
$\|u\|_{\dot{B}^{2s}_{2,\infty}(\Omega)} \le \rhn{\frac{\Lambda}{\sqrt{1-2s}}} \| f \|_{L^2(\Omega)}$.
}

\red{
\medskip
{\it Step 3: Regularity for $s = 1/2$.}
We take the same sequence $\{\sigma_j\}$ as in \rhn{Step 2, which now reads $\sigma_j = 1 - \frac1{2^j}$. The expression \eqref{eq:regularity-induction} for $q=\infty$ becomes} 
\[
\| u \|_{\dot{B}^{1 - 2^{-j}}_{2,\infty}(\Omega)} \le \rhn{\Lambda 2^{(j-1)/2}}  \| f \|_{L^2(\Omega)} \quad \forall j \ge 1.
\]
Given any $\epsilon \in (0,1)$, we let $j$ be the integer number such that $\epsilon \in [2^{-j},2^{-(j-1)})$ and observe that \rhn{$2^{(j-1)/2} \le \epsilon^{-1/2}$.}
Thus, we deduce
\[
\| u \|_{\dot{B}^{1 - \epsilon}_{2,\infty}(\Omega)} \le \rhn{\frac{\Lambda}{\sqrt{\epsilon}}} \| f \|_{L^2(\Omega)}
\]
and this finishes the proof.
}
\end{proof}

\begin{remark}[Sobolev regularity]
\rhn{We now invoke the embedding \eqref{eq:embedding} between Besov and Sobolev spaces to derive more familiar estimates. If $s \neq 1/2$ and $r = \min\{2s,s+1/2\}$, then \eqref{eq:regularity-u-low} implies}
\begin{equation}\label{eq:Sobolev}
|u|_{H^{r-\epsilon}(\Rd)} \le \red{\frac{C}{\sqrt{ \epsilon \, |1-2s|}}}\|f\|_{L^2(\Omega)}
\end{equation}
for any \rhn{$\epsilon < \max\{r,1/4\}$. Similarly, if $s=1/2$ then \eqref{eq:regularity-u-s1/2} yields}
\red{
\begin{equation} \label{eq:Sobolev-s1/2}
|u|_{H^{1-\epsilon}(\Rd)} \le \frac{C}{\epsilon} \|f\|_{L^2(\Omega)}.
\end{equation}
Estimates of this type have been derived by Grubb \cite{Grubb15} for bounded domains with
$C^\infty$ boundary. In contrast to the integer case $s=1$, \eqref{eq:Sobolev} and \eqref{eq:Sobolev-s1/2} confirm} that
the presence of reentrant corners does not reduce the regularity of $u$ for $f\in L^2(\Omega)$.
\end{remark}

\begin{remark}[gap for smooth data]
Grubb \cite{Grubb15} showed that the solution $u$ of \eqref{eq:Dirichlet} belongs to
$H^{s+1/2-\epsilon}(\Omega) \setminus H^{s+1/2}(\Omega)$
for any $\epsilon>0$ even for data $f$ smoother than $L^2(\Omega)$ on $C^\infty$
domains $\Omega$. \rhn{More recently, Abels and Grubb \cite{AbelsGrubb} reduced the domain regularity to $C^{1,\beta}$
with $\beta>2s$. This is consistent with \eqref{eq:Sobolev} for $s>1/2$. The proof of Theorem \ref{thm:regularity-solutions-rep} reveals a different type of regularity obstruction for $s\le 1/2$.} Since our bootstrapping argument pivots on $\widetilde{H}^s(\Omega)$, a limitation emerges from the asymptotics of the regularity parameter $\sigma_j\ge s$, which satisfies the recursion
\[
\sigma_{j+1} = s + \frac{\sigma_j}{2}
\quad\Rightarrow\quad
\sigma_j\to 2s^+
\]
as $j\to\infty$. Therefore, for $s \le 1/2$ the regularity enhancement is $s$ rather than $1/2$ even
for data $f$ smoother than $L^2(\Omega)$. \rhn{This gap between $s$ and $1/2$
remains out of reach for our technique, but it has been bridged by a novel argument in \cite{BoLiNo:Barrett-22,BoLiNo22} that leads to the optimal shift property \eqref{eq:optimal-shift}.}
\end{remark}

\section{Besov regularity for rough data: Proofs of Theorem \ref{prop:regularity-solutions-rough} and Corollary \ref{cor:intermediate}} \label{sec:less-regular-f}

We now consider rough data, namely data in negative-order Besov spaces.

\begin{proof}[Proof of Theorem \ref{prop:regularity-solutions-rough}] We prove the ideal shift inequality \eqref{eq:s+1/2} for $s\in(1/2,1)$, namely
\begin{equation}\label{eq:max-regularity-s-ge-12}
\|u\|_{\dot{B}^{s+1/2}_{2,\infty}(\Omega)} \lesssim \|f\|_{B^{-s+1/2}_{2,1}(\Omega)},
\end{equation}
with conjugate indices $q=\infty$ and $q=1$.
To this end, we perform a bootstrapping argument similar to the proof of
Theorem \ref{thm:regularity-solutions-rep} \rhn{(Besov regularity with $L^2$-data and $s\in(0,1/2]$); the main difference is that we can now take $\sigma = 1$ in the bound \eqref{eq:regularity-f-Besov} of $\calF_1$, the most delicate functional.}

\medskip
\rhn{\it Step 1: Regularity improvement.}
We consider a covering $\{D_\rho(x_j)\}$ of $\Omega$ with balls like in Step 1 of that proof, use the subadditivity \eqref{eq:subadditivity} of $\calF = \calF_2-\calF_1$ together with \eqref{eq:regularity-f-Besov}, \eqref{eq:higher-regularity-F0} and \eqref{eq:well-posedness} to obtain
\begin{align*}
\omega(u; \calF, T, C_j, \sigma)
\lesssim  \| u \|_{B^{\sigma+\gamma}_{2,q}(D_{3\rho}(x_j))} \| f \|_{B^{-\gamma}_{2,q'}(\Omega)}
+ \|u\|_{B^\sigma_{2,q}(D_{4\rho}(x_j))} \|f\|_{H^{-s}(\Omega)},
\end{align*}
on an arbitrary ball $D_\rho(x_j)$ with \rhn{$\gamma \in (0,s)$ and $\sigma \in [0,1]$}. Since
$\|u\|_{B^{s+\sigma/2}_{2,\infty}(D_\rho(x_j))}^2 \lesssim \omega(u; \calF, T, C_j, \sigma)$ and
\rhn{$f \in B^{-\gamma}_{2,q'}(\Omega) \subset H^{-s}(\Omega)$,} we deduce
\begin{equation*}
  \|u\|_{B^{s+\sigma/2}_{2,\infty}(D_\rho(x_j))}^2 \lesssim
  \| u \|_{B^{\sigma+\gamma}_{2,q}(D_{4\rho}(x_j))} \| f \|_{B^{-\gamma}_{2,q'}(\Omega)},
\end{equation*}    
We observe that, in contrast to \eqref{eq:local-reg}, we do not localize
either $\| f \|_{B^{-\gamma}_{2,q'}(\Omega)}$ or $|u|_{\widetilde{H}^s(\Omega)}$, the latter giving rise to
$\|f\|_{H^{-s}(\Omega)}$. This is because doing so would require dealing with localization of
positive Besov norms, as stated in Lemma \ref{L:localization} (localization), but the equivalence constants
are sensitive to the \red{cardinality} $M$ of the covering. Therefore, instead of \eqref{eq:bound3s/2},
adding over $1\le j \le M$ \rhn{and using that $B^{-\gamma}_{2,1}(\Omega) \subset B^{-\gamma}_{2,q'}(\Omega)$}
we obtain the bound
\begin{equation} \label{eq:bound-sigma-gamma} \begin{aligned}
\| u \|^2_{\dot{B}^{s+\sigma/2}_{2,\infty}(\Omega)}
\le \Lambda \| u \|_{\dot{B}^{\sigma+\gamma}_{2,q}(\Omega)} \rhn{\| f \|_{B^{-\gamma}_{2,1}(\Omega)}}
\end{aligned} \end{equation}
with $\Lambda>0$ depending on $M$. \rhn{We further assume that $\Lambda$ is as large as the constant $C$ in the estimate $\|f\|_{H^{-s}(\Omega)} \le C \|f\|_{B^{-\gamma}_{2,1}(\Omega)}$.} The maximal regularity we may expect corresponds to $\sigma = 1$
\begin{equation*} \label{eq:max-regularity-gamma} 
\| u \|^2_{\dot{B}^{s+1/2}_{2,\infty}(\Omega)} \le \Lambda
\rhn{\| u \|_{\dot{B}^{1+\gamma}_{2,q}(\Omega)} \| f \|_{B^{-\gamma}_{2,1}(\Omega)},}
\end{equation*}
which coincides with \eqref{eq:max-regularity-s-ge-12} with $q=\infty$ \rhn{provided $\gamma := s-1/2 > 0$.}

\medskip
\rhn{{\it Step 2: Bootstrap argument.}
To exploit \eqref{eq:bound-sigma-gamma}, we define the sequence $\{\sigma_j\}$ by recursion, namely}
\[
s + \frac{\sigma_j}{2} = \sigma_{j+1} + \gamma = \sigma_{j+1} + s -\frac12
\quad\Rightarrow\quad
\sigma_{j+1} = \frac{\sigma_j+1}{2},
\]
with initial value $\sigma_0=1/2$; the latter is a consequence of setting the starting value
\rhn{$\sigma_0+\gamma=s$ in \eqref{eq:bound-sigma-gamma} with $\gamma=s-1/2$ and $q=2$.}
Using an induction argument, we readily see that
\begin{itemize}
\item
  $\frac12 \le \sigma_j \le 1$: $\sigma_{j+1} = \frac{\sigma_j}{2} + \frac12 \le 1$;

\item
  $\sigma_j$ is monotone increasing: $\sigma_{j+1} = \frac{\sigma_j}{2} + \frac12
  \ge \frac{\sigma_j}{2} + \frac{\sigma_j}{2} = \sigma_j$.
\end{itemize}
This implies that $\sigma_j$ converges and $\lim_{j\to\infty} \sigma_j = 1$.
\rhn{We claim that $u\in \dot{B}^{s+\sigma_j/2}_{2,\infty}(\Omega)$ and
\begin{equation}\label{eq:reg-induction-2}
  \| u \|_{\dot{B}^{s+\sigma_j/2}_{2,\infty}(\Omega)} \le \Lambda \| f \|_{B^{-s+1/2}_{2,1}(\Omega)}
  \quad\forall \, j\ge 0,
\end{equation}
provided $\Lambda$ is as in \eqref{eq:bound-sigma-gamma}. We prove \eqref{eq:reg-induction-2} by induction. For $j=0$, \eqref{eq:bound-sigma-gamma} with $q=2$ and \eqref{eq:well-posedness} yield
\[
\| u \|_{\dot{B}^{s+\sigma_0/2}_{2,\infty}(\Omega)}^2 \le \Lambda \| u \|_{\widetilde{H}^{s}(\Omega)}  
\| f \|_{B^{-s+1/2}_{2,1}(\Omega)}
\le \Lambda \| f \|_{H^{-s}(\Omega)} \| f \|_{B^{-s+1/2}_{2,1}(\Omega)} \le \Lambda^2 \| f \|_{B^{-s+1/2}_{2,1}(\Omega)}^2,
\]
Moreover, if we assume \eqref{eq:reg-induction-2} valid for $j-1\ge0$, then \eqref{eq:bound-sigma-gamma} gives
\eqref{eq:reg-induction-2} for $j$
\[
\| u \|_{\dot{B}^{s+\sigma_j/2}_{2,\infty}(\Omega)}^2 \le \Lambda \| u \|_{\dot{B}^{s+\sigma_{j-1}/2}_{2,\infty}(\Omega)}  
\| f \|_{B^{-s+1/2}_{2,1}(\Omega)}
\le \Lambda^2  \| f \|_{B^{-s+1/2}_{2,1}(\Omega)}^2,
\]
}
as well as \eqref{eq:max-regularity-s-ge-12} upon letting $j\to\infty$ and arguing
as in \red{Step 2} of the proof of Theorem \ref{thm:regularity-solutions-rep}.
\end{proof}

\begin{proof}[Proof of Corollary \ref{cor:intermediate}] \rhn{We resort to the interpolation properties \eqref{eq:def-negative-Besov} and \eqref{eq:interpolation_Besov}.} \red{We consider first the case $s \in (0,1/2)$ and interpolate between the regularity bound \eqref{eq:regularity-u-low-again}} and the stability estimate \eqref{eq:well-posedness}, to immediately deduce that \eqref{eq:s+theta} holds for $\theta \in (0, s), q \in [1,\infty]$, namely,
\[
\|u\|_{\dot{B}^{s+\theta}_{2,q}(\Omega)}  
  \jp{\le \frac{C}{(1-2s)^{\theta/2s}}} \| f \|_{B^{-s+\theta}_{2,q}(\Omega)},
\]
\jp{where $C$ depends on $\Omega,d$ and $q$.}
\red{Instead, if $s = 1/2$ we interpolate between \eqref{eq:regularity-u-s1/2-again} and \eqref{eq:well-posedness} in a similar fashion to derive \eqref{eq:1/2+theta}.}
Finally, if $s \in (1/2,1)$ we interpolate between \eqref{eq:max-regularity-s-ge-12} and  \eqref{eq:well-posedness} to obtain again \red{the estimate \eqref{eq:s+theta},} except that this time $\theta \in (0,1/2)$, $q \in [1,\infty]$.
This concludes the proof of Corollary \ref{cor:intermediate}.
\end{proof}

\subsection*{Acknowledgments}
The authors would like to thank H. Aimar, \red{G. Grubb,} and G. Savar\'e for invaluable discussions about Besov spaces.

\bibliographystyle{abbrv}
\bibliography{regularity.bib}
\end{document}